\def\BState{\State\hskip-\ALG@thistlm}
\Crefname{equation}{}{}
\newtheorem{definition}{Definition}[section]
\newtheorem{theorem}{Theorem}[section]
\newtheorem{corollary}[theorem]{Corollary}
\newtheorem{proposition}[theorem]{Proposition}
\newtheorem{lemma}[theorem]{Lemma}
\newtheorem{remark}[theorem]{Remark}
\numberwithin{equation}{section}
\title{A mathematical study of joint image reconstruction and motion estimation using optimal control}
\author{
 Zhentong Wei \thanks{LSEC, ICMSEC, Academy of Mathematics and Systems Science, Chinese Academy of Sciences, Beijing 100190, China. University of Chinese Academy of Sciences, Beijing 100190, China.}
 \and
 Chong Chen \thanks{LSEC, ICMSEC, Academy of Mathematics and Systems Science, Chinese Academy of Sciences, Beijing 100190, China. University of Chinese Academy of Sciences, Beijing 100190, China.}
}
\date{}
\begin{document}
\begin{sloppypar}

\maketitle

\begin{abstract}
Spatiotemporal dynamic medical imaging is critical in clinical applications, such as tomographic imaging of the heart or lung. To address such kind of spatiotemporal imaging problems, essentially, a time-dependent dynamic inverse problem, the variational model with intensity, edge feature and topology preservations was proposed for joint image reconstruction and motion estimation in the previous paper [C.~Chen, B.~Gris, and O.~{\"O}ktem, {\it SIAM J. Imaging Sci.}, 12 (2019), pp.~1686--1719], which is suitable to invert the time-dependent sparse sampling data for the motion target with large diffeomorphic deformations. However, the existence of solution to the model has not been given yet. In order to preserve its topological structure and edge feature of the motion target, the unknown velocity field in the model is restricted into the admissible Hilbert space, and the unknown template image is modeled in the space of bounded variation functions. Under this framework, this paper analyzes and proves the solution existence of its time-discretized version from the point view of optimal control. Specifically, there exists a constraint of transport equation in the equivalent optimal control model. We rigorously demonstrate the closure of the equation, including the solution existence and uniqueness, the stability of the associated nonlinear solution operator, and the convergence. Finally, the solution existence of that model can be concluded.
\end{abstract}

\begin{keywords}
  Spatiotemporal dynamic imaging, joint image reconstruction and motion estimation, variational model, solution existence, optimal control, transport equation
\end{keywords}

% REQUIRED
% \begin{MSCcodes}
% 68Q25, 68R10, 68U05
% \end{MSCcodes}

\section{Introduction}
Spatiotemporal dynamic medical imaging is critical in clinical applications, such as tomographic imaging of the heart or lung \cite{wang1999cardiac,schwarz2000implications,haldar2010spatiotemporal,chen2019new}. As an example, in PET/CT cardiac imaging, when data is acquired over a relatively long period of time (often in the range of minutes), the unknown motion of the organs leads to severe degradation in image quality \cite{rahmim2013four,gigengack2015motion}. Furthermore, low-dose or sparse sampling is often required to reduce the radiation or conduct fast scanning. Hence, sparse image reconstruction with high spatial and temporal resolutions becomes particularly important and very challenging. Joint image reconstruction and motion estimation plays an important role in spatiotemporal dynamic imaging, which embeds the motion estimation into the reconstruction process \cite{burger2018variational,chen2019new,huang2019dynamic, brehm2012self, tang2012fully}. 
This is a hard problem since both of the image sequence and velocity field are unknown, which can be attributed to an emerging time-dependent dynamic inverse problem \cite{kaltenbacher2021timedependent}. 

In the last few decades, several mathematical models for image reconstruction and motion estimation have been proposed and widely developed till now. A variational model based on gradient regularization for optical flow estimation is proposed in \cite{horn1981determining} (see also \cite{schnorr1991determining}). The authors in \cite{aubert1999computing} used $\mathscr{L}^{1}$ norm for the optical flow constraint and considered the edge-preserving properties and homogeneous regions (see also \cite{aubert1999mathematical,kornprobst1998contribution}). Furthermore, an optimal control formulation for determining optical flow is considered in \cite{borzi2003optimal}. The authors in \cite{chen2011image} modified the model in \cite{borzi2003optimal}, and analyzed the well-posedness of the corresponding minimization problem under the assumption of vanishing divergence of the velocity field. To estimate the motion, a displacement field was treated as the motion filed in \cite{blume2010joint}, and the authors proposed a joint image estimate and deformable motion model by regularizing the motion field in positron emission tomography (PET). Furthermore, a B-spline-based discretized scheme for the model in \cite{blume2010joint} and a parameter-free motion regularization approach were presented in \cite{blume2012joint}. In the presence of severe artifacts, the authors in \cite{brehm2013artifact} considered how to estimate the respiratory motion, and further proposed a motion-compensated image reconstruction model by applying the estimated motion fields. Due to the necessity of double gating, the simultaneous respiratory and cardiac gating, in vivo micro-CT imaging of small animals, a cardiorespiratory motion-compensated micro-CT image reconstruction model was proposed in \cite{brehm2015cardiorespiratory}. The authors in \cite{liu20155d} developed a 4D cone-beam computed tomography reconstruction method from the breathing motion model in \cite{low2005novel}. 
The authors in \cite{chen2018indirect} proposed an image reconstruction method called indirect image registration based on motion estimation with the given template image and indirect observations, which combines the two challenges of sparse image reconstruction and image registration (also see \cite{oektem2017shape,gris2020image}). Assuming the exact deformation of the moving object is known, some motion compensation strategies in tomography were studied in \cite{hahn2021motion}. To address the problem of spatiotemporal dynamic imaging, a joint total variation (TV)-TV model was proposed in \cite{burger2018variational}. Considering topology preservation of the reconstructed image sequence, the authors in \cite{chen2019new} proposed a joint image reconstruction and motion estimation model under the framework of large deformation diffeomorphic image mapping (LDDMM). The references, for instance, \cite{beg2005computing, bruveris2015geometry, trouve1998diffeomorphisms, dupuis1998variational, younes2010shapes}, presented the details of LDDMM. Moreover, a concept called diffeomorphic optimal transportation (DOT) was introduced in \cite{chen2021spatiotemporal}, and the author further proposed a new variational model using DOT for spatiotemporal imaging involving mass-preserving large diffeomorphic deformations. 

Although the optical flow is often used to describe the kinematic behavior of objects \cite{fran1990}, few of literatures considered its mathematical analysis. The key point to this problem is how to deal with the existence and uniqueness of solution to the transport equation, which is still an active field of research. In order for solution to satisfy the desired chain rule, the concept of renormalized solutions is proposed in \cite{diperna1989ordinary}, and the authors  proved the existence and uniqueness of solution to the transport equation in the space of $W^{1,1}$. Furthermore, the velocity field is extended to $BV$ regularity in \cite{ambrosio2004transport}. The author deduced that if the space of velocity fields is $BV$ with respect to the spatial variables, then any distributional solution is a renormalized solution, and further derived the existence and uniqueness of solution to the transport equation. Moreover, based on the works in \cite{ambrosio2004transport, crippa2008flow, crippa2014initial, crippa2014noteb}, the existence and uniqueness of solution to the transport equation was demonstrated under the velocity field with mild regularity assumptions in \cite{jarde2018analysis} (see also \cite{jarde2019existence}).

\textbf{Contributions.} The solution existence  of the time-discretized variational model with intensity, edge feature and topology preservations proposed in \cite{chen2019new} is proved from the point view of optimal control, where the unknown velocity field is restricted into the admissible Hilbert space, and the unknown template image (the initial value of the sequence image) is modeled in the space of bounded variation functions. Specifically, we rigorously demonstrate the closure of the constraint of transport equation in the equivalent optimal control model, including the solution existence and uniqueness, the stability of the associated nonlinear solution operator, and the convergence. 

Compared to the previous work, the restriction on the divergence of the velocity field no longer needs to be considered, the initial value of the sequence image (template image) is treated as an unknown, and the edge-preserving property of the image is also incorporated. These factors lead to a more difficult problem than before that we need to analyze. Furthermore, the obtained results are still valid for $N$-dimensional space, where $N \geq 2$, and also compensate the theory of the emerging  time-dependent dynamic inverse problem. 

\textbf{Outline.} In \cref{sec2}, we briefly introduce some preliminaries and the problem setting that are needed in the paper. \Cref{sec3} gives the detailed proofs of the existence and uniqueness of solution to the constraint of transport equation. Furthermore, the stability of the nonlinear solution operator of the associated constraint is proved, and the convergence of the constraint is also verified. Finally, the existence of solution to the considered minimization problem is concluded in \cref{sec4}. 

% The outline is not required, but we show an example here.
% The paper is organized as follow. Our main results are in
% \cref{sec:main}, our new algorithm is in \cref{sec:alg}, experimental
% results are in \cref{sec:experiments}, and the conclusions follow in
% \cref{sec:conclusions}.

\section{Mathematical preliminaries}\label{sec2}

Here we introduce some required mathematical preliminaries. Let us first introduce the Hadamard inequality. 

\begin{lemma}[Hadamard inequality \cite{horn2012matrix}]\label{Hadamard}
  Let $A \in \mathbb{R}^{N \times N}$. Then
  \[
      |\operatorname{det} A| \leq \prod_{i=1}^{N}\left(\sum_{j=1}^{N}\left|A_{i j}\right|^{2}\right)^{\frac{1}{2}} \text { and }\ |\operatorname{det} A| \leq \prod_{j=1}^{N}\left(\sum_{i=1}^{N}\left|A_{i j}\right|^{2}\right)^{\frac{1}{2}}.
  \]
\end{lemma}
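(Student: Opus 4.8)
The plan is to prove the first (row) inequality; the second then follows at once by applying it to $A^{\top}$, since $\det A^{\top} = \det A$ and transposition interchanges the roles of rows and columns. If $A$ is singular, or if some row of $A$ is the zero vector, the left-hand side is $0$ while the right-hand side is a product of non-negative numbers, so the inequality is trivial; hence I may assume $A$ is invertible with every row nonzero.

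Next I would pass to the Gram matrix $M := A A^{\top}$. It is symmetric, and since $A$ is invertible it is positive definite, with diagonal entries exactly $M_{ii} = \sum_{j=1}^{N} |A_{ij}|^{2}$. Because $(\det A)^{2} = \det(A A^{\top}) = \det M$, it suffices to establish the classical fact that $\det M \le \prod_{i=1}^{N} M_{ii}$ for every symmetric positive definite $M$; taking square roots then gives the claim. To prove this reduced inequality I would normalize the diagonal: put $D := \operatorname{diag}\big(M_{11}^{-1/2}, \dots, M_{NN}^{-1/2}\big)$ and $\widetilde M := D M D$, so that $\widetilde M$ is again symmetric positive definite, all of its diagonal entries equal $1$, and therefore $\operatorname{tr}\widetilde M = N$. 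Letting $\lambda_{1}, \dots, \lambda_{N} > 0$ be its eigenvalues, the AM--GM inequality (equivalently, concavity of $\log$) yields $\det\widetilde M = \prod_{i}\lambda_{i} \le \big(\tfrac{1}{N}\sum_{i}\lambda_{i}\big)^{N} = 1$. Since $\det\widetilde M = (\det D)^{2}\det M = \big(\prod_{i}M_{ii}\big)^{-1}\det M$, this rearranges to $\det M \le \prod_{i}M_{ii}$, completing the argument.

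An alternative I would be equally happy to use, with a more geometric flavour, is Gram--Schmidt: orthogonalizing the rows $a_{1}, \dots, a_{N}$ of $A$ writes $A = L Q$ with $Q$ orthogonal and $L$ lower triangular satisfying $L_{ii} = \|q_{i}\|$, where $q_{i}$ is the component of $a_{i}$ orthogonal to $\operatorname{span}\{a_{1}, \dots, a_{i-1}\}$; since an orthogonal projection never increases norm, $\|q_{i}\| \le \|a_{i}\|$, and hence $|\det A| = \prod_{i}\|q_{i}\| \le \prod_{i}\|a_{i}\|$. I do not expect any genuine obstacle here: the only point requiring care is disposing of the degenerate (singular or zero-row) cases at the outset, after which the estimate is a one-line consequence of AM--GM — or of the norm-non-increasing property of orthogonal projection.
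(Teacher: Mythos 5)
Your proof is correct, and no comparison with the paper is really possible: the paper states this lemma as a classical fact imported from Horn--Johnson and gives no proof of its own. Your reduction of the column version to the row version via $A^{\top}$, the dismissal of the singular case, and the main argument through the Gram matrix $M = AA^{\top}$ (normalizing the diagonal and applying AM--GM to the eigenvalues of $\widetilde M$) are all sound and complete; the alternative Gram--Schmidt/QR argument you sketch is equally valid and is essentially the standard textbook proof. The only stylistic remark is that the ``zero row'' case is already covered by the singular case, so that clause is redundant; otherwise there is nothing to add.
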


Next, we give a brief introduction for the space of $BV$ functions, flow of diffeomorphisms and  problem setting.

\subsection{Space of $BV$ functions}
First of all, we recall some basic facts about the space of $BV$ functions that plays an important role in image analysis \cite{rudin1992nonlinear} (see \cite{ambrosio2000functions,evans2015measure}).
\begin{definition}[$BV$ functions \cite{ambrosio2000functions}]
  Assume $\Omega \subset \mathbb{R}^{N}$ is an open set and  $f \in \mathscr{L}^{1}(\Omega)$. Say that $f$ is a function of bounded variation in $\Omega$ if the distributional derivative of $f$ is representable by a finite Radon measure in  $\Omega$, i.e., if for any $i=1 \ldots N$, it holds that
  \[
        \int_{\Omega} f \frac{\partial \varphi}{\partial x_{i}} d x=-\int_{\Omega} \varphi d D_{i} f, \quad \forall \varphi \in \mathscr{C}_{c}^{\infty}(\Omega) 
  \]
  for some $\mathbb{R}^{N}$-valued measure $Df = (D_{1}f, \dots, D_{N}f)$ in $\Omega$. The vector space of all functions of bounded variation in $\Omega$ is denoted by $BV(\Omega)$. Furthermore, define
  \[
    BV_{0}(\Omega) \triangleq \{f\in BV(\Omega) : Tr(f) = 0\},
  \]
  where $Tr$ denotes the trace operator.
\end{definition}

\begin{definition}[Total variation \cite{aubert2006mathematical}]
  Assume $\Omega \subset \mathbb{R}^{N}$ is an open set and $f \in \mathscr{L}^{1}(\Omega)$. The total variation of $f$ in $\Omega$ is defined by
  \begin{multline}
      |Df|(\Omega)\\
      \triangleq \sup \left\{\int_{\Omega} f \operatorname{div} \varphi d x :\varphi=\left(\varphi_{1}, \varphi_{2}, \ldots, \varphi_{N}\right) \in \mathscr{C}_{0}^{1}(\Omega;\mathbb{R}^{N})\ \text{and}\ \|\varphi\|_{L^{\infty}(\Omega)} \leq 1\right\},
  \end{multline} 
  where $\operatorname{div} \varphi=\sum_{i=1}^{N} \frac{\partial \varphi_{i}}{\partial x_{i}}(x)$ and $\|\varphi\|_{L^{\infty}(\Omega)}=\sup _{x} \sqrt{\sum_{i} \varphi_{i}^{2}(x)}$.
\end{definition}

We subsequently recall some helpful properties of $BV$ functions that are requisite. Before that, we give the definition of mollifier operator.
\begin{definition}[\cite{evans2022partial}]\label{define2.7}
    \begin{enumerate}[(i)]
        \item Define $\rho \in \mathscr{C}^{\infty}(\mathbb{R}^{N}, \mathbb{R})$ by
        \[
        \rho(x) \triangleq \left\{\begin{array}{ll} 
            C \exp\left(\frac{1}{|x|^{2} -1}\right) &\text { if }~ |x| \leq 1, \\[2mm]
            0 &\text { if }~ |x| \geq 1. \end{array}\right.
        \]
        The constant $C > 0$ is selected such that $\int_{\mathbb{R}^{N}}\rho dx = 1$.
        \item For $\varepsilon > 0$, set
        \[
        \rho_{\varepsilon}(x) \triangleq \frac{1}{\varepsilon^{N}}\rho\left(\frac{x}{\varepsilon}\right).
        \]
    \end{enumerate}
    We call $\rho$ the standard mollifier. The functions $\rho_{\varepsilon}$ are $\mathscr{C}^{\infty}$ and satisfy
    \[
    \int_{\mathbb{R}^{N}}\rho_{\varepsilon} dx = 1,\quad \operatorname{supp}(\rho_{\varepsilon}) \subset B(0,\varepsilon).
    \]
\end{definition}

\begin{proposition}[Property of $Df$ \cite{ambrosio2000functions}]\label{pro2.4}
  Assume $\Omega \subset \mathbb{R}^{N}$ is an open set and $f \in BV(\Omega)$, if $\rho$ is the convolution kernel in \cref{define2.7} and $\Omega_{\varepsilon} = \{ x\in \Omega : dist(x, \partial \Omega) > \varepsilon\}$, where $\varepsilon > 0$. Then, it holds
  \[
  \nabla\left(f * \rho_{\varepsilon}\right)=D f * \rho_{\varepsilon} \quad \text { in }~  \Omega_{\varepsilon}, 
  \]
  where $Df$ is the Radon measure of $f$.
\end{proposition}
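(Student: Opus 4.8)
The plan is to establish the identity pointwise, for each fixed $x \in \Omega_\varepsilon$ and each component index $i \in \{1,\dots,N\}$, by plugging a well-chosen test function into the integration-by-parts identity that defines the distributional derivative $D_i f$. The key idea is that, once $x$ is fixed, the map $y \mapsto \rho_\varepsilon(x-y)$ is itself an admissible test function in the definition of $BV(\Omega)$, and the $x$-derivative of the mollification can be turned into a $y$-derivative falling on this test function.

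First I would fix $x \in \Omega_\varepsilon$ and set $\varphi_x(y) \triangleq \rho_\varepsilon(x-y)$. Since $\operatorname{supp}(\rho_\varepsilon) \subset B(0,\varepsilon)$ by \cref{define2.7}, the support of $\varphi_x$ is contained in $\overline{B(x,\varepsilon)}$, which is a compact subset of $\Omega$ precisely because $x \in \Omega_\varepsilon$ means $\operatorname{dist}(x,\partial\Omega) > \varepsilon$; combined with $\rho_\varepsilon \in \mathscr{C}^{\infty}$ this gives $\varphi_x \in \mathscr{C}_c^\infty(\Omega)$. Next I would differentiate $f*\rho_\varepsilon$ under the integral sign: writing $(f*\rho_\varepsilon)(x) = \int_\Omega f(y)\rho_\varepsilon(x-y)\,dy$, the integrand is differentiable in $x$ with $x$-derivative dominated, locally uniformly in $x$, by $\|\nabla\rho_\varepsilon\|_{L^\infty}\,|f(y)|\,\mathbf{1}_{B(x,\varepsilon)}(y)$, which is integrable in $y$ since $f \in \mathscr{L}^1(\Omega)$; hence the dominated convergence theorem yields $\partial_i(f*\rho_\varepsilon)(x) = \int_\Omega f(y)\,(\partial_i\rho_\varepsilon)(x-y)\,dy$. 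The elementary but crucial observation is $(\partial_i\rho_\varepsilon)(x-y) = -\partial_{y_i}\big[\rho_\varepsilon(x-y)\big] = -\partial_{y_i}\varphi_x(y)$, so that $\partial_i(f*\rho_\varepsilon)(x) = -\int_\Omega f(y)\,\partial_{y_i}\varphi_x(y)\,dy$. Applying the defining identity of $BV$ functions to the test function $\varphi_x$ then gives $-\int_\Omega f\,\partial_{y_i}\varphi_x\,dy = \int_\Omega \varphi_x\,dD_i f = \int_\Omega \rho_\varepsilon(x-y)\,dD_i f(y) = (D_i f * \rho_\varepsilon)(x)$, where the last expression is the convolution of the Radon measure $D_i f$ with $\rho_\varepsilon$. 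Running this over all $i$ gives $\nabla(f*\rho_\varepsilon)(x) = (Df*\rho_\varepsilon)(x)$ for every $x \in \Omega_\varepsilon$.

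The argument is essentially a direct unwinding of definitions, so I do not expect any substantive obstacle; the only points demanding care are (a) checking that $\varphi_x$ genuinely has compact support inside $\Omega$ — this is exactly why the statement is restricted to $\Omega_\varepsilon$, since outside $\Omega_\varepsilon$ the formula need not even make sense — and (b) the dominated-convergence justification for differentiating under the integral sign, which relies only on $f \in \mathscr{L}^1(\Omega)$ together with the boundedness and compact support of $\nabla\rho_\varepsilon$.
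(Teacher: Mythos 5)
Your argument is correct and is exactly the standard proof of this fact (the paper itself states \cref{pro2.4} as a cited preliminary from the reference, where the proof proceeds in the same way): fix $x\in\Omega_\varepsilon$, observe that $\varphi_x(y)=\rho_\varepsilon(x-y)$ lies in $\mathscr{C}_c^\infty(\Omega)$, differentiate under the integral sign, and invoke the defining integration-by-parts identity of $BV(\Omega)$ componentwise. Both of the delicate points you flag --- the compact support of $\varphi_x$ inside $\Omega$, which is precisely where the restriction to $\Omega_\varepsilon$ enters, and the dominated-convergence justification using $f\in\mathscr{L}^1(\Omega)$ and the boundedness of $\nabla\rho_\varepsilon$ --- are handled adequately, so no gap remains.
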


\begin{theorem}[\cite{ambrosio2000functions}]\label{th2.5}
  Let $\Omega \subset \mathbb{R}^{N}$ be an open set, $\mu = (\mu_{1}, \dots, \mu_{m})$ be a Radon measure in $\Omega$ and let $(\rho_{\varepsilon})_{\varepsilon > 0}$ be a family of mollifiers. Then, the measures $\mu_{\varepsilon}=\mu * \rho_{\varepsilon} \mathcal{L}^{N}$ locally weakly$^{\star}$ converge in $\Omega$ to $\mu$ as $\varepsilon \rightarrow 0$ and the estimate
  \[
  \int_{E}\left|\mu * \rho_{\varepsilon}\right|(x) d x \leq|\mu|\left(\Omega_{\varepsilon}\right) 
  \]
  holds whenever $E \subset \Omega_{\varepsilon}$ is a Borel set. Here, $\mathcal{L}^{N}$ is Lebesgue outer measure in $\mathbb{R}^{N}$.
\end{theorem}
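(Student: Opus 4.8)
The plan is to split \Cref{th2.5} into an \emph{a priori} integral estimate and the local weak-$\star$ convergence: the estimate comes from the polar decomposition of $\mu$ together with Tonelli's theorem, while the convergence is the classical mollification argument, the only extra care being that a Radon measure is finite on the compact neighbourhoods that arise. For the estimate, fix $x\in\Omega_\varepsilon$, so that $B(x,\varepsilon)\subset\Omega$ and $(\mu*\rho_\varepsilon)(x)=\int_\Omega\rho_\varepsilon(x-y)\,d\mu(y)$ is a well-defined (indeed $\mathscr{C}^\infty$) $\mathbb{R}^m$-valued function on $\Omega_\varepsilon$. Writing the polar decomposition $d\mu=\sigma\,d|\mu|$ with $|\sigma(y)|=1$ for $|\mu|$-a.e.\ $y$, the triangle inequality for vector-valued integrals gives the pointwise domination
\[
 |(\mu*\rho_\varepsilon)(x)|=\Bigl|\int_\Omega\rho_\varepsilon(x-y)\,\sigma(y)\,d|\mu|(y)\Bigr|\le\int_\Omega\rho_\varepsilon(x-y)\,d|\mu|(y)=(|\mu|*\rho_\varepsilon)(x).
\]

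Integrating this over a Borel set $E\subset\Omega_\varepsilon$ and exchanging the order of integration by Tonelli's theorem (the integrand $(x,y)\mapsto\rho_\varepsilon(x-y)$ being nonnegative),
\[
 \int_E|\mu*\rho_\varepsilon|\,dx\le\int_\Omega\Bigl(\int_E\rho_\varepsilon(x-y)\,dx\Bigr)d|\mu|(y)\le|\mu|(\Omega_\varepsilon),
\]
where the last inequality uses that the inner integral is $\le\int_{\mathbb{R}^N}\rho_\varepsilon\,dx=1$ and, by $\operatorname{supp}\rho_\varepsilon\subset B(0,\varepsilon)$, vanishes once $y$ leaves the $\varepsilon$-neighbourhood of $E$; keeping this boundary layer inside $\Omega_\varepsilon$ is the one bookkeeping point to watch.

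For the local weak-$\star$ convergence, test against an arbitrary $\varphi\in\mathscr{C}_c(\Omega;\mathbb{R}^m)$ and set $K:=\operatorname{supp}\varphi\Subset\Omega$. For $\varepsilon<\operatorname{dist}(K,\partial\Omega)$, Fubini's theorem (legitimate because the integrand is bounded and $|\mu|$ is finite on the compact $\varepsilon$-neighbourhood of $K$, $\mu$ being Radon) together with the symmetry $\rho_\varepsilon(z)=\rho_\varepsilon(-z)$ gives
\[
 \int_\Omega\varphi\cdot(\mu*\rho_\varepsilon)\,dx=\int_\Omega\Bigl(\int_\Omega\varphi(x)\,\rho_\varepsilon(x-y)\,dx\Bigr)\cdot d\mu(y)=\int_\Omega(\varphi*\rho_\varepsilon)\cdot d\mu.
\]
Since $\varphi*\rho_\varepsilon\to\varphi$ uniformly on $\Omega$ as $\varepsilon\to0$ (uniform continuity of $\varphi$) and, for $\varepsilon$ small, all the supports $\operatorname{supp}(\varphi*\rho_\varepsilon)$ lie in a fixed compact $K'\Subset\Omega$ with $|\mu|(K')<\infty$, dominated convergence yields $\int_\Omega(\varphi*\rho_\varepsilon)\cdot d\mu\to\int_\Omega\varphi\cdot d\mu$; as $\varphi$ is arbitrary, $\mu*\rho_\varepsilon\,\mathcal{L}^N$ converges weakly-$\star$ to $\mu$ locally in $\Omega$. (Equi-boundedness of $|\mu*\rho_\varepsilon|$ on compact subsets, if needed, is a byproduct of the first estimate.)

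I expect the only genuine difficulty to be the boundary accounting in the integral estimate — making sure the mass that $\rho_\varepsilon$ averages in from a neighbourhood of $E$ is still charged inside $\Omega_\varepsilon$ — while everything else reduces to standard mollifier calculus, the polar decomposition, and Tonelli/Fubini.
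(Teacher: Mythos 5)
Your second half (the local weak-$\star$ convergence) is correct and standard: for $\varepsilon<\operatorname{dist}(\operatorname{supp}\varphi,\partial\Omega)$ the Fubini exchange is justified exactly as you say, $\varphi*\rho_\varepsilon\to\varphi$ uniformly with supports in a fixed compact set of finite $|\mu|$-mass, and the conclusion follows. The first half is also correct \emph{up to its last step}: polar decomposition gives $|\mu*\rho_\varepsilon|\le|\mu|*\rho_\varepsilon$ pointwise on $\Omega_\varepsilon$, and Tonelli gives $\int_E|\mu*\rho_\varepsilon|\,dx\le\int_\Omega\bigl(\int_E\rho_\varepsilon(x-y)\,dx\bigr)\,d|\mu|(y)\le|\mu|\bigl(I_\varepsilon(E)\bigr)$, where $I_\varepsilon(E)=\{y:\operatorname{dist}(y,E)<\varepsilon\}$ is the set on which the inner integral can be nonzero.

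The genuine gap is the very "bookkeeping point" you flag and then leave unresolved: from $E\subset\Omega_\varepsilon$ one only gets $I_\varepsilon(E)\subset\Omega$, \emph{not} $I_\varepsilon(E)\subset\Omega_\varepsilon$, since points of $I_\varepsilon(E)$ may lie within distance $\varepsilon$ of $\partial\Omega$. This step cannot be repaired, because the inequality with $|\mu|(\Omega_\varepsilon)$ on the right is false in general: take $\Omega=(0,1)$, $\varepsilon=0.3$, $\mu=\delta_{0.1}$ and $E=\Omega_\varepsilon=(0.3,0.7)$; then $\int_E|\mu*\rho_\varepsilon|\,dx>0$ while $|\mu|(\Omega_\varepsilon)=0$. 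The statement as transcribed in \cref{th2.5} deviates from the cited source, where the bound is $|\mu|\bigl(I_\varepsilon(E)\bigr)$ (hence in particular $\le|\mu|(\Omega)$); your computation proves precisely that correct version, and that weaker bound is also all the paper ever uses (e.g.\ in the proof of \cref{lem3.9} the quantity invoked is $|Df_0^n|(\Omega)$). So the fix is to correct the right-hand side of the estimate, not your Tonelli argument; as written, though, the final inequality you assert does not follow from what precedes it.
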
 

The following results are two main tools used in this paper.
\begin{theorem}[Weak$^{\star}$ topology \cite{ambrosio2000functions}]\label{th2.6}
    Assume $\Omega \subset \mathbb{R}^{N}$ is an open set and $\{f_{k}\}_{k} \subset BV(\Omega)$. Then, $\{f_{k}\}_{k}$  converges weakly$^{\star}$ to $f$ in $BV(\Omega)$ if and only if $\{f_{k}\}_{k}$ is bounded in $BV(\Omega)$ and converges to $f$ in $\mathscr{L}^{1}(\Omega)$.
\end{theorem}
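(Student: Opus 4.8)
The plan is to prove the two implications separately, using only the definition of the distributional derivative, the Riesz representation of finite Radon measures as the dual of $\mathscr{C}_{0}(\Omega;\mathbb{R}^{N})$, and the Banach--Steinhaus theorem. Throughout, recall that by definition $\{f_{k}\}_{k}$ converges weakly$^{\star}$ to $f$ in $BV(\Omega)$ precisely when $f_{k}\to f$ in $\mathscr{L}^{1}(\Omega)$ and $Df_{k}\rightharpoonup^{\star}Df$ as $\mathbb{R}^{N}$-valued Radon measures, i.e. $\int_{\Omega}\varphi\cdot dDf_{k}\to\int_{\Omega}\varphi\cdot dDf$ for every $\varphi\in\mathscr{C}_{0}(\Omega;\mathbb{R}^{N})$.

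For the ``only if'' direction, assume weak$^{\star}$ convergence. The convergence $f_{k}\to f$ in $\mathscr{L}^{1}(\Omega)$ is part of the hypothesis and already gives $\sup_{k}\|f_{k}\|_{\mathscr{L}^{1}(\Omega)}<\infty$. For the derivatives, $(Df_{k})_{k}$ is a weak$^{\star}$-convergent sequence in the Banach-space dual $(\mathscr{C}_{0}(\Omega;\mathbb{R}^{N}))^{\star}$, so by Banach--Steinhaus it is norm-bounded, that is $\sup_{k}|Df_{k}|(\Omega)<\infty$. Hence $\{f_{k}\}_{k}$ is bounded in $BV(\Omega)$.

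For the ``if'' direction, assume $|Df_{k}|(\Omega)\leq C$ for all $k$ and $f_{k}\to f$ in $\mathscr{L}^{1}(\Omega)$. First, for any fixed $\varphi\in\mathscr{C}_{0}^{1}(\Omega;\mathbb{R}^{N})$ with $\|\varphi\|_{\mathscr{L}^{\infty}(\Omega)}\leq 1$ one has $\int_{\Omega}f\operatorname{div}\varphi\,dx=\lim_{k}\int_{\Omega}f_{k}\operatorname{div}\varphi\,dx\leq\liminf_{k}|Df_{k}|(\Omega)\leq C$; taking the supremum over such $\varphi$ gives $|Df|(\Omega)\leq C<\infty$, so $f\in BV(\Omega)$. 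It remains to verify $Df_{k}\rightharpoonup^{\star}Df$. For a test field $\varphi\in\mathscr{C}_{c}^{1}(\Omega;\mathbb{R}^{N})$, the definition of the distributional derivative gives $\int_{\Omega}\varphi\cdot dDf_{k}=-\int_{\Omega}f_{k}\operatorname{div}\varphi\,dx$, and since $\operatorname{div}\varphi\in\mathscr{L}^{\infty}(\Omega)$ has compact support and $f_{k}\to f$ in $\mathscr{L}^{1}(\Omega)$, the right-hand side converges to $-\int_{\Omega}f\operatorname{div}\varphi\,dx=\int_{\Omega}\varphi\cdot dDf$. Finally, I would pass from $\mathscr{C}_{c}^{1}$ to a general $\varphi\in\mathscr{C}_{0}(\Omega;\mathbb{R}^{N})$ by a density and $3\varepsilon$ argument: given $\varepsilon>0$, pick $\psi\in\mathscr{C}_{c}^{1}(\Omega;\mathbb{R}^{N})$ with $\|\varphi-\psi\|_{\infty}\leq\varepsilon$, split $\int_{\Omega}\varphi\cdot dDf_{k}-\int_{\Omega}\varphi\cdot dDf$ into the difference over $\psi$ plus two tail terms, bound the tail terms uniformly by $C\varepsilon$ using $|Df_{k}|(\Omega),|Df|(\Omega)\leq C$, and let $k\to\infty$ then $\varepsilon\to 0$.

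The main obstacle is precisely the uniform control required for the density step in the ``if'' direction: without the a priori $BV$-bound one cannot upgrade convergence tested against $\mathscr{C}_{c}^{1}$ fields to convergence tested against all of $\mathscr{C}_{0}$, which is exactly why boundedness is forced onto the right-hand side of the equivalence. The extraction of that same bound in the ``only if'' direction is the soft Banach--Steinhaus step, so the two implications are genuinely tied together through the quantity $\sup_{k}|Df_{k}|(\Omega)$.
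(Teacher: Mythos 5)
The paper does not prove this statement---it is quoted verbatim from Ambrosio--Fusco--Pallara \cite{ambrosio2000functions} as a known characterization of weak$^{\star}$ convergence in $BV$. Your argument (Banach--Steinhaus for the ``only if'' direction, and lower semicontinuity of the total variation plus integration by parts against $\mathscr{C}_{c}^{1}$ fields followed by a density/$3\varepsilon$ upgrade to $\mathscr{C}_{0}$ for the ``if'' direction) is the standard textbook proof of that result and is correct.
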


Equipped with this topology, $BV(\Omega)$ possesses compactness that is given in the following theorem.
\begin{theorem}[Compactness \cite{aubert2006mathematical}]\label{th2.7}
    Assume $\Omega \subset \mathbb{R}^{N}$ is bounded and has a Lipschitz boundary. Every uniformly bounded sequence $\{f_{k}\}_{k} \subset BV(\Omega)$ is relatively compact in $\mathscr{L}^{p}(\Omega)$ for $1 \leq p<\frac{N}{N-1}, N \geq 1$. Moreover, there exists a subsequence $\{f_{k'}\}_{k'}$ of  $\{f_{k}\}_{k}$ converging weakly$^{\star}$ to $f$ in $BV(\Omega)$. We also know that $BV(\Omega)$ is continuously embedded into $\mathscr{L}^{p}(\Omega)$ with $p = + \infty$ if $N = 1$, and $p = \frac{N}{N-1}$ otherwise.
\end{theorem}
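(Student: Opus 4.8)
The plan is to prove this classical $BV$-compactness (Rellich--Kondrachov type) statement by mollification together with the Arzel\`a--Ascoli argument, and then to upgrade $\mathscr{L}^{1}$-convergence to $\mathscr{L}^{p}$-convergence by the Sobolev embedding and interpolation. \textbf{Step 1 (reduction to $\mathbb{R}^{N}$).} Since $\Omega$ is bounded with Lipschitz boundary, there is a bounded linear extension operator $E\colon BV(\Omega)\to BV(\mathbb{R}^{N})$ with $Ef=f$ on $\Omega$, $\operatorname{supp}(Ef)$ contained in a fixed bounded open neighborhood $\Omega'$ of $\overline{\Omega}$, and $\|Ef\|_{BV(\mathbb{R}^{N})}\le C_{\Omega}\|f\|_{BV(\Omega)}$. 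This is precisely where the Lipschitz-boundary hypothesis is used. Put $g_{k}\triangleq Ef_{k}$; then $\{g_{k}\}_{k}$ is uniformly bounded in $BV(\mathbb{R}^{N})$, say $\sup_{k}\|g_{k}\|_{BV(\mathbb{R}^{N})}\le M$, with all supports inside $\Omega'$.

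\textbf{Step 2 (mollification).} For $\varepsilon>0$ let $g_{k}^{\varepsilon}\triangleq g_{k}*\rho_{\varepsilon}$ with $\rho_{\varepsilon}$ as in \cref{define2.7}. By \cref{pro2.4} and \cref{th2.5}, $\nabla g_{k}^{\varepsilon}=Dg_{k}*\rho_{\varepsilon}$ and $\|g_{k}^{\varepsilon}-g_{k}\|_{\mathscr{L}^{1}(\mathbb{R}^{N})}\le C\varepsilon\,|Dg_{k}|(\mathbb{R}^{N})\le C\varepsilon M$, so the mollified family converges to $g_{k}$ in $\mathscr{L}^{1}$ uniformly in $k$. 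Moreover, for each fixed $\varepsilon$ one has $\|g_{k}^{\varepsilon}\|_{\infty}\le\|\rho_{\varepsilon}\|_{\infty}\|g_{k}\|_{\mathscr{L}^{1}}$ and $\|\nabla g_{k}^{\varepsilon}\|_{\infty}\le\|\nabla\rho_{\varepsilon}\|_{\infty}\|g_{k}\|_{\mathscr{L}^{1}}$, both bounded uniformly in $k$; hence $\{g_{k}^{\varepsilon}\}_{k}$ is equibounded and equi-Lipschitz on $\overline{\Omega'}$, and by Arzel\`a--Ascoli it is relatively compact in $\mathscr{C}(\overline{\Omega'})$, hence in $\mathscr{L}^{1}(\Omega')$.

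\textbf{Step 3 (diagonal extraction and the limit).} Taking $\varepsilon=1/j$ and iterating Step 2, I would obtain nested subsequences along which $\{g_{k}^{1/j}\}$ converges in $\mathscr{L}^{1}(\Omega')$ for every $j$, and then pass to the diagonal subsequence $\{g_{k'}\}$. For $\delta>0$ pick $j$ with $CM/j<\delta/3$; then for $k',l'$ large, splitting $g_{k'}-g_{l'}=(g_{k'}-g_{k'}^{1/j})+(g_{k'}^{1/j}-g_{l'}^{1/j})+(g_{l'}^{1/j}-g_{l'})$ gives $\|g_{k'}-g_{l'}\|_{\mathscr{L}^{1}(\Omega')}<\delta$, so $\{g_{k'}\}$ is Cauchy in $\mathscr{L}^{1}(\Omega')$, converging to some $g$; restriction to $\Omega$ yields $f_{k'}\to f\triangleq g|_{\Omega}$ in $\mathscr{L}^{1}(\Omega)$. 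Lower semicontinuity of the total variation under $\mathscr{L}^{1}$-convergence gives $|Df|(\Omega)\le\liminf_{k'}|Df_{k'}|(\Omega)<\infty$, so $f\in BV(\Omega)$; since $\{f_{k'}\}$ is bounded in $BV(\Omega)$ and converges in $\mathscr{L}^{1}(\Omega)$, \cref{th2.6} shows that $\{f_{k'}\}$ converges weakly$^{\star}$ to $f$ in $BV(\Omega)$.

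\textbf{Step 4 (from $\mathscr{L}^{1}$ to $\mathscr{L}^{p}$).} Applying the Sobolev--Gagliardo--Nirenberg inequality for $BV$, $\|h\|_{\mathscr{L}^{N/(N-1)}(\mathbb{R}^{N})}\le C_{N}|Dh|(\mathbb{R}^{N})$ (with the endpoint reading $\|h\|_{\mathscr{L}^{\infty}}\le C|Dh|$ when $N=1$), to $h=g_{k'}-g_{l'}$ and to each $g_{k'}$ shows that $\{f_{k'}\}$ is bounded in $\mathscr{L}^{N/(N-1)}(\Omega)$, and also yields the stated continuous embedding $BV(\Omega)\hookrightarrow\mathscr{L}^{N/(N-1)}(\Omega)$ (respectively $\mathscr{L}^{\infty}$ for $N=1$). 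For $1\le p<N/(N-1)$ pick $\theta\in[0,1)$ with $\tfrac1p=(1-\theta)+\theta\tfrac{N-1}{N}$; the interpolation inequality $\|f_{k'}-f_{l'}\|_{\mathscr{L}^{p}}\le\|f_{k'}-f_{l'}\|_{\mathscr{L}^{1}}^{1-\theta}\|f_{k'}-f_{l'}\|_{\mathscr{L}^{N/(N-1)}}^{\theta}$ then forces $f_{k'}\to f$ in $\mathscr{L}^{p}(\Omega)$, which is the claimed relative compactness. The one genuinely substantive point is Step 1: constructing the $BV$ extension with a uniform bound is the only place the Lipschitz-boundary assumption is essential, and that is the main obstacle; the remainder is soft analysis once the problem has been transferred to $\mathbb{R}^{N}$.
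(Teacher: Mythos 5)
Your proof is essentially correct, but note that the paper itself offers no argument for this statement: \cref{th2.7} is quoted verbatim from the cited reference as a classical compactness theorem for $BV$, so there is no internal proof to compare against. Your route (Lipschitz extension to $BV(\mathbb{R}^{N})$, mollification plus Arzel\`a--Ascoli, diagonal extraction, lower semicontinuity of the total variation together with \cref{th2.6} for the weak$^{\star}$ statement, and finally the $BV$ Sobolev inequality with $\mathscr{L}^{1}$--$\mathscr{L}^{N/(N-1)}$ interpolation to upgrade to $\mathscr{L}^{p}$, $p<\tfrac{N}{N-1}$) is the standard textbook proof and all the steps go through. One small point of rigor: the uniform-in-$k$ estimate $\|g_{k}*\rho_{\varepsilon}-g_{k}\|_{\mathscr{L}^{1}}\le C\varepsilon\,|Dg_{k}|(\mathbb{R}^{N})$ does not follow from \cref{pro2.4} and \cref{th2.5} as you cite them; it rests on the translation estimate $\int|u(x+h)-u(x)|\,dx\le |h|\,|Du|(\mathbb{R}^{N})$ for $BV$ functions (or, equivalently, on writing the difference against the mollifier and using Fubini), which you should invoke explicitly. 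With that reference fixed, and with the $N=1$ case read through the $\mathscr{L}^{\infty}$ endpoint as you indicate, your argument is complete and correctly isolates the Lipschitz-boundary hypothesis as entering only through the extension operator.
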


\subsection{Flow of diffeomorphisms}

Let us review some results concerning the flow of diffeomorphisms (see, for instance, \cite{younes2010shapes,miller2015hamiltonian}). For brevity, we write $f_{t}(\cdot) \triangleq f(t,\cdot)$.
\begin{definition}[\cite{chen2011image,crippa2008flow}]
  Given a velocity field $\boldsymbol{v} : [0, 1] \times \Omega \rightarrow \mathbb{R}^{N}$, the classical flow of velocity field $\boldsymbol{v}$ is a map 
  \[
  \phi_{t}(x) :[0,1] \times \Omega \longrightarrow \Omega,
  \]
  which satisfies the following ODE:
  \begin{equation}\label{2.4}
      \left\{\begin{array}{ll}\partial_{t} \phi_{t}(x)=\boldsymbol{v}(t, \phi_{t}( x)) &\text { in }~ ] 0, 1] \times \Omega, \\[2mm] \phi_{0}(x)=x &\text { in }~ \Omega.\end{array}\right.
  \end{equation}   
\end{definition}

Below, the definition is very important for the existence of the flow of diffeomorphisms.
\begin{definition}[Admissible space \cite{vialard2009hamiltonian, younes2010shapes}]\label{def2.2}
    A Hilbert space $\mathscr{V} \subset \mathscr{C}_{0}^{k}(\Omega, \mathbb{R}^{N})$ is k-admissible if it is (canonically) embedded in $\mathscr{C}_{0}^{k}(\Omega, \mathbb{R}^{N})$, i.e., there exists a positive constant C such that, for all $\boldsymbol{v} \in \mathscr{V}$, 
    \[
    ||\boldsymbol{v}||_{\mathscr{V}} \geqslant C ||\boldsymbol{v}||_{k,\infty}.
    \]
    Particularly, $\mathscr{V}$ is said admissible if $k=1$.
\end{definition}

Next, we denote the space of velocity fields as 
\[
    \mathscr{L}^{p}([0,1], \mathscr{V})\triangleq \left\{\boldsymbol{v}: \boldsymbol{v}(t, \cdot) \in \mathscr{V} \text { and }\|\boldsymbol{v}\|_{\mathscr{L}^{p}([0,1], \mathscr{V})}<\infty \text { for } p \in [1, \infty]\right\} 
\]
with the norm
\[
\|\boldsymbol{v}\|_{\mathscr{L}^{p}([0,1], \mathscr{V})}\triangleq \left(\int_{0}^{1}\|\boldsymbol{v}(t, \cdot)\|_{\mathscr{V}}^{p} \mathrm{~d} t\right)^{1 / p} \quad \text { for }~ p \in [1, \infty[,
\]
and 
\[
\|\boldsymbol{v}\|_{\mathscr{L}^{\infty}([0,1], \mathscr{V})}\triangleq \mathrm{ess} \underset{t\in [0,1 ]}{\sup} \|\boldsymbol{v}(t, \cdot)\|_{\mathscr{V}}  \quad \text { for } ~ p = \infty.
\]
To simplify the notation, let $\mathscr{L}_{\mathscr{V}}^{p}(\Omega)$ denote $\mathscr{L}^{p}([0,1], \mathscr{V})$.

The classical flow can be a flow of diffeomorphisms via an admissible velocity field as follow.
\begin{theorem}[\cite{younes2010shapes, bruveris2015geometry}]\label{th2.1}
    Let $\mathscr{V}$ be an admissible Hilbert space and $\boldsymbol{v} \in \mathscr{L}_{\mathscr{V}}^{2}(\Omega)$ be a velocity field. Then, the ODE in \eqref{2.4} admits a unique solution $\phi^{\boldsymbol{v}} \in \mathscr{C}^{1}([0,1] \times \Omega, \Omega)$, such that for $t \in [0,1]$, the mapping $\phi^{\boldsymbol{v}}_{t}:\Omega \rightarrow \Omega$ is a $\mathscr{C}^{1}$-diffeomorphism on $\Omega$.
\end{theorem}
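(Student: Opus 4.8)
The plan is to proceed in four stages: local well-posedness via a Carath\'eodory--Picard argument, global existence on $[0,1]$ from an a priori bound, $\mathscr{C}^1$ regularity in $(t,x)$ through the linearized flow equation, and construction of a $\mathscr{C}^1$ inverse by reverse-time integration. First, since $\mathscr{V}$ is admissible, \cref{def2.2} yields $\|\boldsymbol{v}(t,\cdot)\|_{1,\infty} \leq C^{-1}\|\boldsymbol{v}(t,\cdot)\|_{\mathscr{V}}$, so for a.e.\ $t$ the field $\boldsymbol{v}(t,\cdot)$ is bounded with bounded derivative, hence globally Lipschitz in $x$ with Lipschitz constant $L(t) := C^{-1}\|\boldsymbol{v}(t,\cdot)\|_{\mathscr{V}}$, and $L \in \mathscr{L}^2([0,1]) \subset \mathscr{L}^1([0,1])$ because $\boldsymbol{v} \in \mathscr{L}^2_{\mathscr{V}}(\Omega)$; moreover $t \mapsto \boldsymbol{v}(t,x)$ is measurable. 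This is exactly the Carath\'eodory setting, so for each fixed $x \in \Omega$ the integral equation $\phi_t(x) = x + \int_0^t \boldsymbol{v}(s,\phi_s(x))\,\mathrm{d}s$ has a unique absolutely continuous local solution, and Gronwall's inequality applied with the $\mathscr{L}^1$ weight $L(\cdot)$ propagates uniqueness over the whole maximal interval of existence.

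Second, for global existence note the a priori estimate $|\phi_t(x) - x| \leq \int_0^1 \|\boldsymbol{v}(s,\cdot)\|_{\infty}\,\mathrm{d}s \leq C^{-1}\|\boldsymbol{v}\|_{\mathscr{L}^2_{\mathscr{V}}(\Omega)}$, which rules out finite-time escape; and since $\boldsymbol{v}(t,\cdot) \in \mathscr{C}_0^1(\Omega,\mathbb{R}^N)$ vanishes on $\partial\Omega$, a trajectory starting in $\Omega$ cannot reach the boundary, so $\phi_t(x) \in \Omega$ for all $t \in [0,1]$ and the flow is defined on all of $[0,1] \times \Omega$. Joint continuity of $(t,x) \mapsto \phi_t(x)$ then follows from the integral equation together with Gronwall estimates applied to two initial points and to two times.

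Third, for the spatial $\mathscr{C}^1$ regularity, differentiate the integral equation: the candidate Jacobian $J_t(x)$ should solve the linear matrix ODE $\partial_t J_t(x) = D\boldsymbol{v}(t,\phi_t(x))\,J_t(x)$ with $J_0(x) = \mathrm{Id}$, whose coefficient $t \mapsto D\boldsymbol{v}(t,\phi_t(x))$ lies in $\mathscr{L}^1([0,1])$ and is continuous in $x$; this linear problem has a unique continuous solution depending continuously on $x$, and a standard difference-quotient argument combined with Gronwall identifies $J_t(x)$ with $D_x\phi_t(x)$, so $\phi^{\boldsymbol{v}} \in \mathscr{C}^1([0,1] \times \Omega,\Omega)$. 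Liouville's formula $\det J_t(x) = \exp\!\left(\int_0^t \operatorname{div}\boldsymbol{v}(s,\phi_s(x))\,\mathrm{d}s\right) > 0$ shows $D_x\phi_t(x)$ is everywhere invertible. Finally, for fixed $\tau \in [0,1]$ the time-reversed ODE with field $-\boldsymbol{v}(\tau-\cdot,\cdot)$ satisfies the same hypotheses, so integrating backward from time $\tau$ produces a $\mathscr{C}^1$ map $\psi$; uniqueness of solutions to \eqref{2.4} forces $\psi \circ \phi_\tau = \phi_\tau \circ \psi = \mathrm{id}_\Omega$, hence $\phi_\tau$ is a $\mathscr{C}^1$-bijection of $\Omega$ with $\mathscr{C}^1$ inverse, i.e.\ a $\mathscr{C}^1$-diffeomorphism.

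The main obstacle is carrying out the Carath\'eodory existence/uniqueness theory and, especially, the difference-quotient argument for $D_x\phi$ under only $\mathscr{L}^2$ (rather than continuous) dependence on $t$, together with verifying joint continuity of $D_x\phi$ in $(t,x)$; the remaining steps are Gronwall bookkeeping. An alternative is to invoke the corresponding statements in \cite{younes2010shapes,bruveris2015geometry} directly.
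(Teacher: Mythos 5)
The paper itself contains no proof of \cref{th2.1}: the result is imported verbatim from \cite{younes2010shapes,bruveris2015geometry}, so there is no internal argument to compare yours against. Your outline is essentially the standard proof given in those references: admissibility (\cref{def2.2}) makes $\boldsymbol{v}(t,\cdot)$ Lipschitz in $x$ with a time-integrable Lipschitz constant $L(t)$, Carath\'eodory--Picard plus Gronwall give a unique global flow for \eqref{2.4}, the linearized matrix equation with $\mathscr{L}^1$-in-time coefficient identifies the spatial Jacobian and Liouville's formula makes it invertible, and reverse-time integration produces the $\mathscr{C}^1$ inverse. Two points deserve to be made explicit rather than asserted. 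First, the claim that trajectories starting in $\Omega$ never reach $\partial\Omega$ needs a one-line justification, e.g.\ $|\boldsymbol{v}(t,y)|\le L(t)\,\operatorname{dist}(y,\partial\Omega)$ (since $\boldsymbol{v}(t,\cdot)\in\mathscr{C}_0^1$ vanishes on the boundary) and Gronwall, or backward uniqueness against the stationary boundary trajectories. Second, the caveat you flag at the end is real: with $\boldsymbol{v}$ only $\mathscr{L}^2$ in time, $\partial_t\phi_t(x)=\boldsymbol{v}(t,\phi_t(x))$ holds merely for a.e.\ $t$, so the flow is absolutely continuous (in fact $1/2$-H\"older) in $t$ rather than classically $\mathscr{C}^1$ in $t$; the substantive content of the theorem is joint continuity of $(t,x)\mapsto\phi_t(x)$, $\mathscr{C}^1$ regularity in $x$ with continuous Jacobian, and that each $\phi_t$ is a $\mathscr{C}^1$-diffeomorphism. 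That looseness sits in the statement as quoted in the paper (and in the cited sources' informal phrasing), not in anything your argument introduces; apart from spelling out these two points, your route is the same as the literature's and is sound.
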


\begin{definition}[\cite{younes2010shapes}]
   Let $\boldsymbol{v} \in \mathscr{L}_{\mathscr{V}}^{2}(\Omega)$. We denote by $\phi^{\boldsymbol{v}}_{s,t}(x)$ the solution at time $t$ of the ODE in \eqref{2.4} with initial
   condition $\phi_s(x) = x$. The function $(t, x) \mapsto \phi^{\boldsymbol{v}}_{s,t}(x)$ is called the flow associated to $\boldsymbol{v}$ starting at $s$. It is defined on $[0, 1]\times \Omega$ and takes values in $\Omega$.
\end{definition}
\begin{proposition}[\cite{younes2010shapes}]\label{pro2.13}
    If $\boldsymbol{v} \in \mathscr{L}_{\mathscr{V}}^{2}(\Omega)$ and $s, r, t \in [0, 1]$, then
    \[
        \phi^{\boldsymbol{v}}_{s,t} = \phi^{\boldsymbol{v}}_{r,t} \circ \phi^{\boldsymbol{v}}_{s,r}.
    \]
    In particularly, $\phi^{\boldsymbol{v}}_{s,t} \circ \phi^{\boldsymbol{v}}_{t,s} = \mathrm{Id}$ and $\phi^{\boldsymbol{v}}_{s,t}$ is invertible for all $s$ and $t$.
\end{proposition}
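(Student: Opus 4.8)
The plan is to reduce the flow identity to the \emph{uniqueness} half of the Cauchy--Lipschitz theory already used in \Cref{th2.1}. Fix $\boldsymbol{v}\in\mathscr{L}_{\mathscr{V}}^{2}(\Omega)$, fix $s,r\in[0,1]$ and a point $x\in\Omega$, and introduce the two curves
\[
\alpha(t)\triangleq\phi^{\boldsymbol{v}}_{s,t}(x),\qquad \beta(t)\triangleq\phi^{\boldsymbol{v}}_{r,t}\bigl(\phi^{\boldsymbol{v}}_{s,r}(x)\bigr),\qquad t\in[0,1].
\]
By the definition of the flow starting at a prescribed time, $\alpha$ solves the ODE in \eqref{2.4} with initial time $s$, while $\beta$ solves the same ODE with initial time $r$ and initial point $\phi^{\boldsymbol{v}}_{s,r}(x)$; both curves take values in $\Omega$ by \Cref{th2.1}, so the compositions are meaningful. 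At $t=r$ one has $\alpha(r)=\phi^{\boldsymbol{v}}_{s,r}(x)$ and $\beta(r)=\phi^{\boldsymbol{v}}_{r,r}\bigl(\phi^{\boldsymbol{v}}_{s,r}(x)\bigr)=\phi^{\boldsymbol{v}}_{s,r}(x)$, since $\phi^{\boldsymbol{v}}_{r,r}=\mathrm{Id}$. Hence $\alpha$ and $\beta$ are two solutions of $\partial_{t}\psi(t)=\boldsymbol{v}(t,\psi(t))$ that agree at the instant $t=r$.

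\textbf{Next I would invoke uniqueness.} Because $\mathscr{V}$ is admissible, \Cref{def2.2} gives $\|\boldsymbol{v}(t,\cdot)\|_{1,\infty}\le C\|\boldsymbol{v}(t,\cdot)\|_{\mathscr{V}}$, so the spatial Lipschitz modulus $t\mapsto\mathrm{Lip}\bigl(\boldsymbol{v}(t,\cdot)\bigr)$ is dominated by $C\|\boldsymbol{v}(t,\cdot)\|_{\mathscr{V}}\in\mathscr{L}^{2}([0,1])\subset\mathscr{L}^{1}([0,1])$. This integrable-in-time, Lipschitz-in-space bound is precisely the Carathéodory condition under which a solution of the ODE in \eqref{2.4} passing through a given point is unique, both forward and backward in time --- the very uniqueness underlying \Cref{th2.1} (a Grönwall comparison of $|\alpha(t)-\beta(t)|$ on $[r,1]$ and on $[0,r]$ makes this explicit). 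Therefore $\alpha\equiv\beta$ on $[0,1]$, and since $x\in\Omega$ was arbitrary, $\phi^{\boldsymbol{v}}_{s,t}=\phi^{\boldsymbol{v}}_{r,t}\circ\phi^{\boldsymbol{v}}_{s,r}$ for every $t\in[0,1]$.

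\textbf{The ``in particular'' part.} Apply the composition identity to the triple $(s,t,s)$, obtaining $\phi^{\boldsymbol{v}}_{s,s}=\phi^{\boldsymbol{v}}_{t,s}\circ\phi^{\boldsymbol{v}}_{s,t}$; since $\phi^{\boldsymbol{v}}_{s,s}=\mathrm{Id}$ by the initial condition in \eqref{2.4}, this yields $\phi^{\boldsymbol{v}}_{t,s}\circ\phi^{\boldsymbol{v}}_{s,t}=\mathrm{Id}$, and exchanging the roles of $s$ and $t$ gives $\phi^{\boldsymbol{v}}_{s,t}\circ\phi^{\boldsymbol{v}}_{t,s}=\mathrm{Id}$. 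Hence $\phi^{\boldsymbol{v}}_{s,t}$ is a bijection of $\Omega$ with inverse $\phi^{\boldsymbol{v}}_{t,s}$, which is itself $\mathscr{C}^{1}$ by \Cref{th2.1}, so $\phi^{\boldsymbol{v}}_{s,t}$ is in fact a $\mathscr{C}^{1}$-diffeomorphism.

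\textbf{Main obstacle.} The only genuinely delicate point is the uniqueness step: one must make sure the time-dependent field $\boldsymbol{v}$ meets a Carathéodory-type hypothesis (measurability in $t$ together with an $\mathscr{L}^{1}$-in-time Lipschitz modulus in $x$, both supplied by admissibility of $\mathscr{V}$ and $\boldsymbol{v}\in\mathscr{L}_{\mathscr{V}}^{2}(\Omega)$) so that two solutions coinciding at one time coincide on the whole interval, and that the time-regularity of $t\mapsto\phi^{\boldsymbol{v}}_{s,t}(x)$ (absolute continuity already suffices) is enough to run the Grönwall argument. Ensuring the curves stay inside $\Omega$ so that all compositions are defined is the remaining technicality, and it is handled by \Cref{th2.1}; everything else is bookkeeping with the flow property.
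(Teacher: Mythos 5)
Your proof is correct. Note that the paper itself gives no proof of this proposition --- it is quoted directly from \cite{younes2010shapes} --- and your argument (the two curves $\alpha,\beta$ solving the ODE in \eqref{2.4} and agreeing at $t=r$, uniqueness via the integrable-in-time Lipschitz bound supplied by admissibility of $\mathscr{V}$ and $\boldsymbol{v}\in\mathscr{L}_{\mathscr{V}}^{2}(\Omega)$, then specializing the composition identity to obtain $\phi^{\boldsymbol{v}}_{s,t}\circ\phi^{\boldsymbol{v}}_{t,s}=\mathrm{Id}$ and invertibility) is exactly the standard argument used in that reference.
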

From \cref{pro2.13}, we immediately get
\begin{equation}\label{2.7}
    \phi^{\boldsymbol{v}}_{t} = \phi^{\boldsymbol{v}}_{0,t},\quad (\phi^{\boldsymbol{v}}_{t})^{-1} = \phi^{\boldsymbol{v}}_{t,0}.
\end{equation}
    
%\begin{definition}[\cite{younes2010shapes}]
%  If $\mathscr{V} \subset \mathscr{C}_{0}^{1}(\Omega, \mathbb{R}^{N})$ is admissible, we denote
%  \[
%      \mathscr{G}_{\mathscr{V}}\triangleq \left\{\phi: \phi=\phi^{\boldsymbol{v}}_{0,1}, \boldsymbol{v} \in \mathscr{L}_{\mathscr{V}}^{2}(\Omega)\right\} 
%  \]
%  the set of diffeomorphisms provided by flows associated to elements $\boldsymbol{v} \in \mathscr{L}_{\mathscr{V}}^{2}(\Omega)$ at time $1$.
%\end{definition}
%\begin{theorem}[\cite{younes2010shapes}]
%    $\mathscr{G}_{\mathscr{V}}$ is a group for the composition of functions.
%\end{theorem}

% \begin{theorem}[\cite{younes2010shapes}]
%   $\mathscr{G}_{\mathscr{V}}$ is a group for the composition of functions. i.e., {\textcolor{red}{???}}
%   \begin{equation}\label{2.7}
%       \phi^{\boldsymbol{v}}_{s,t}\triangleq \phi^{\boldsymbol{v}}_{t} \circ (\phi^{\boldsymbol{v}}_{s})^{-1}\quad \text { for }~ t, s \in [0, 1].
%   \end{equation}
%  Here, $\phi^{\boldsymbol{v}}_{t}$ is generated by the ODE in \eqref{2.4} with given $\boldsymbol{v} \in \mathscr{L}_{\mathscr{V}}^{2}(\Omega)$.
% \end{theorem}

\subsection{Problem setting}    
The problem that will be studied is whether there exists a minimizer of the general variational model for the spatiotemporal inverse problem in \cite{chen2019new}, which is an optimal control problem  defined by
\begin{equation}\label{2.1}
\begin{aligned}
    &\min _{\substack{I \in \mathscr{X} \\ \phi_{t_{i}}\in \mathscr{G}}} \mathcal{J}\left(I, \phi_{t_{i}} \right) \triangleq \frac{1}{T}\sum_{i = 1}^{T}\big(\mathcal{D}\left(\mathcal{T}_{t_{i}}\left(\phi_{t_{i}}\# I\right), g_{t_{i}}\right)+\mu_{2} \mathcal{R}_{2}\left(\phi_{t_{i}}\right) \big)+\mu_{1} \mathcal{R}_{1}(I) \\[2mm]
    & \quad \text { s.t. } \mathcal{E}\left(I, \phi_{t_{i}}\right) = 0, 
\end{aligned}   
\end{equation}
where $\mathscr{X}$ is the reconstruction space, i.e., the space of all possible images on a fixed domain $\Omega$, $\mathscr{Y}$ is the data space, i.e., the space of all possible data on a fixed domain $\widetilde{\Omega}$ and $g_{t_{i}} \in \mathscr{Y}$ is the measure data. Moreover, $\mathscr{G}$ is the group of diffeomorphisms on  $\Omega$ and $\phi_{t_{i}} \in \mathscr{G}:\Omega \rightarrow \Omega$ is continuously differentiable, where $\phi^{-1}_{t_{i}}$ is the inverse of $\phi_{t_{i}}$ \cite{beg2005computing, tu2011manifolds, chen2019new}. Furthermore, $\mathcal{T}_{t_{i}} : \mathscr{X} \rightarrow \mathscr{Y}$ is a time-dependent forward operator (e.g., the Radon transform with different geometric parameters in CT scanning) \cite{chen2019new, natterer2001mathematics}. The above $\mathcal{D} : \mathscr{Y} \times \mathscr{Y} \rightarrow \mathbb{R}_{+}$ is the data discrepancy function, $\mathcal{E} : \mathscr{X} \times \mathscr{G} \rightarrow \mathbb{R}^{l}$ is the constraint, $\mathcal{R}_{1} : \mathscr{X} \rightarrow \mathbb{R}_{+}$ and $\mathcal{R}_{2} : \mathscr{G} \rightarrow \mathbb{R}_{+}$ are the regularization functionals.

For deformation $\phi_{t_{i}} \in \mathscr{G}$ and image $I : \Omega \rightarrow \mathbb{R}$, $\phi_{t_{i}}\#I$ that defines a group action of $\mathscr{G}$ on the set of all images models how the deformation $\phi_{t_{i}}$ acts on the image $I$ \cite{grenander2006pattern, miller2015hamiltonian, thomson1917growth, trouve2015shape, younes2010shapes}. Here the following intensity-preserving deformation is considered 
\begin{equation}\label{2.2}
    \phi_{t_{i}}\#I = I \circ \phi^{-1}_{t_{i}},
\end{equation}
where $\circ$ denote the function composition. For its Lagrangian description, we refer e.g. to \cite{temam2005mathematical}. 

Next, given the velocity field $\boldsymbol{v} \in \mathscr{L}_{\mathscr{V}}^{2}(\Omega)$, the flow $\phi^{\boldsymbol{v}}_{t_{i}}$ in \eqref{2.1} generated by the flow equation \eqref{2.4} is a diffeomorphism on $\Omega$ via \cref{th2.1}. The authors in $\cite{chen2019new}$ proposed the regularization function $\mathcal{R}_{2}$ defined by
\[            
    \mathcal{R}_{2}\left(\phi^{\boldsymbol{v}}_{t_{i}}\right)\triangleq \int_{0}^{t_{i}}\|\boldsymbol{v}(\tau, \cdot)\|_{\mathscr{V}}^{2} \mathrm{~d} \tau.
\]
Therefore, problem \eqref{2.1} becomes
\begin{equation}\label{2.10}
  \begin{aligned}
      &\min _{\substack{I \in \mathscr{X} \\ \boldsymbol{v} \in \mathscr{L}_{\mathscr{V}}^{2}}} \mathcal{J}\left(I, \boldsymbol{v} \right) \triangleq \frac{1}{T}\sum_{i = 1}^{T}\big(\mathcal{D}\left(\mathcal{T}_{t_{i}}\left(\phi^{\boldsymbol{v}}_{0,t_{i}} \# I\right), g_{t_{i}}\right)+\mu_{2} \int_{0}^{t_{i}}\|\boldsymbol{v}(\tau, \cdot)\|_{\mathscr{V}}^{2} \mathrm{~d} \tau \big)+\mu_{1} \mathcal{R}_{1}(I) \\[2mm]
  & \quad \text { s.t. } \phi^{\boldsymbol{v}}_{0,t_{i}}\  \text{solves ODE}\  \eqref{2.4}. 
  \end{aligned}
\end{equation}
The following theorem shows that problem \eqref{2.10} is equivalent to a PDE-constrained optimal control problem.

\begin{theorem}[\cite{chen2019new}]\label{the2.7}
  Let $\mathscr{X}$ be a space of real-valued functions that are sufficiently smooth (e.g., the space of differentiable functions in distribution theory). Let $f_{0} \in \mathscr{X}$ and $f : [0,1] \times \Omega \rightarrow \mathbb{R}$ be defined as
  \[
      f({t_{i}}, \cdot) \triangleq \phi^{\boldsymbol{v}}_{0,t_{i}}\# I \quad \text { for }~  t_{i} \in [0, 1],
  \]
  where $\phi^{\boldsymbol{v}}_{0,t_{i}}$ is a diffeomorphism on $\Omega$ given by \eqref{2.7}. Assume furthermore that $f_{t_{i}} \in \mathscr{X}$ for $t_{i} \in [0, 1]$. Then, \eqref{2.10} with the group action given by the intensity-preserving deformation in \eqref{2.2} is equivalent to 
  \begin{equation}\label{transport equation}
      \begin{aligned}
          & \min _{\substack{f_{0} \in \mathscr{X} \\
          \boldsymbol{v} \in \mathscr{L}_{\mathscr{V}}^2(\Omega)}} \mathcal{J}(f_{0},\boldsymbol{v}) \triangleq \frac{1}{T}\sum_{i=1}^{T}\big(\mathcal{D}\left(\mathcal{T}_{t_{i}}(f_{t_{i}}), g_{t_{i}}\right)+\mu_2 \int_0^{t_{i}}\|\boldsymbol{v}(\tau, \cdot)\|_{\mathscr{V}}^2 \mathrm{~d} \tau\big)+\mu_1 \mathcal{R}_1(f_{0}) \\[2mm]
          &\quad \text { s.t. } \left\{\begin{array}{ll} \partial_t f(t, \cdot)+\langle\nabla f(t, \cdot), \boldsymbol{v}(t, \cdot)\rangle_{\mathbb{R}^N}=0 \quad \text{in}~ ]0,1] \times \Omega,\\[2mm]
          f(0, \cdot)=f_{0} \qquad \text { in }~\Omega.\end{array}\right.
      \end{aligned}
  \end{equation}
\end{theorem}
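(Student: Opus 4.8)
The plan is to prove the equivalence by exhibiting an explicit correspondence between the feasible pairs of the two problems under which the objective functionals coincide term by term. Put $f_{0}=I$. Given $\boldsymbol{v}\in\mathscr{L}_{\mathscr{V}}^{2}(\Omega)$, \cref{th2.1} and \cref{pro2.13} together with \eqref{2.7} guarantee that $\phi^{\boldsymbol{v}}_{0,t}$ is a $\mathscr{C}^{1}$-diffeomorphism of $\Omega$ for every $t\in[0,1]$, with inverse $\psi_{t}:=(\phi^{\boldsymbol{v}}_{0,t})^{-1}=\phi^{\boldsymbol{v}}_{t,0}$. I would then show that the function $f(t,\cdot):=\phi^{\boldsymbol{v}}_{0,t}\#f_{0}=f_{0}\circ\psi_{t}$ is precisely the solution of the transport equation in \eqref{transport equation}. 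Since $\phi^{\boldsymbol{v}}_{0,0}=\mathrm{Id}$, the initial condition $f(0,\cdot)=f_{0}$ is immediate, so the whole content is the PDE itself.

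For the forward direction I would differentiate $f(t,x)=f_{0}(\psi_{t}(x))$ by the chain rule; the only nontrivial ingredient is the identity for the time derivative of the inverse flow, $\partial_{t}\psi_{t}(x)=-\,D\psi_{t}(x)\,\boldsymbol{v}(t,x)$, which follows by differentiating $\phi^{\boldsymbol{v}}_{0,t}(\psi_{t}(x))=x$ in $t$, using $\partial_{t}\phi^{\boldsymbol{v}}_{0,t}(z)=\boldsymbol{v}(t,\phi^{\boldsymbol{v}}_{0,t}(z))$ from \eqref{2.4}, and observing that $[D\phi^{\boldsymbol{v}}_{0,t}(\psi_{t}(x))]^{-1}=D\psi_{t}(x)$. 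Plugging this in gives $\partial_{t}f(t,x)=\langle\nabla f_{0}(\psi_{t}(x)),\partial_{t}\psi_{t}(x)\rangle_{\mathbb{R}^{N}}=-\,\langle D\psi_{t}(x)^{\top}\nabla f_{0}(\psi_{t}(x)),\boldsymbol{v}(t,x)\rangle_{\mathbb{R}^{N}}$, while $\nabla_{x}f(t,x)=D\psi_{t}(x)^{\top}\nabla f_{0}(\psi_{t}(x))$, so that $\langle\nabla_{x}f(t,x),\boldsymbol{v}(t,x)\rangle_{\mathbb{R}^{N}}$ cancels $\partial_{t}f(t,x)$ exactly and the transport equation holds. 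For the converse — that every sufficiently regular solution of the transport equation is of this form, so that no feasible points are lost — I would use the method of characteristics: set $g(t,x):=f(t,\phi^{\boldsymbol{v}}_{0,t}(x))$, compute $\partial_{t}g=(\partial_{t}f+\langle\nabla f,\boldsymbol{v}\rangle_{\mathbb{R}^{N}})(t,\phi^{\boldsymbol{v}}_{0,t}(x))=0$, conclude $g(t,\cdot)\equiv g(0,\cdot)=f_{0}$, i.e. $f(t,\cdot)=f_{0}\circ\phi^{\boldsymbol{v}}_{t,0}=\phi^{\boldsymbol{v}}_{0,t}\#f_{0}$. This simultaneously yields uniqueness of the solution at the smoothness level assumed.

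With the representation $f(t_{i},\cdot)=\phi^{\boldsymbol{v}}_{0,t_{i}}\#I$ in hand, the remaining step is purely bookkeeping: under $I=f_{0}$ the data-discrepancy terms $\mathcal{D}(\mathcal{T}_{t_{i}}(\phi^{\boldsymbol{v}}_{0,t_{i}}\#I),g_{t_{i}})$ and $\mathcal{D}(\mathcal{T}_{t_{i}}(f_{t_{i}}),g_{t_{i}})$ agree, the motion regularizers $\mu_{2}\int_{0}^{t_{i}}\|\boldsymbol{v}(\tau,\cdot)\|_{\mathscr{V}}^{2}\,\mathrm{d}\tau$ are literally identical, and $\mu_{1}\mathcal{R}_{1}(I)=\mu_{1}\mathcal{R}_{1}(f_{0})$; hence $\mathcal{J}(I,\boldsymbol{v})=\mathcal{J}(f_{0},\boldsymbol{v})$ on corresponding feasible pairs, so the two minimization problems share the same infimum and their minimizers are in one-to-one correspondence via $f_{0}=I$.

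The main obstacle I anticipate is regularity rather than algebra: \cref{th2.1} only yields $\phi^{\boldsymbol{v}}\in\mathscr{C}^{1}([0,1]\times\Omega,\Omega)$ for an admissible ($k=1$) space $\mathscr{V}$, so $\psi_{t}=\phi^{\boldsymbol{v}}_{t,0}$ is merely $\mathscr{C}^{1}$ in $x$; one must check that this, together with the ``sufficiently smooth'' hypothesis on $\mathscr{X}$ and the standing assumption $f_{t_{i}}\in\mathscr{X}$, suffices to make $f=f_{0}\circ\psi$ continuously differentiable in $(t,x)$ and to legitimize the chain-rule manipulations and the differentiation of the inverse-flow identity in the classical sense. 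If instead one reads the transport equation weakly, the same computations must be reproduced by testing against $\mathscr{C}_{c}^{\infty}$ functions and performing the change of variables $y=\phi^{\boldsymbol{v}}_{0,t}(x)$, at which point the regularity and positivity of the Jacobian $\det D\phi^{\boldsymbol{v}}_{0,t}$ (controlled via Liouville's formula and the admissibility bound $\|\boldsymbol{v}\|_{\mathscr{V}}\geq C\|\boldsymbol{v}\|_{1,\infty}$) enter. Everything else is routine.
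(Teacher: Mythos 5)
The paper does not prove \cref{the2.7}; it is imported from \cite{chen2019new}, and your proposal reconstructs essentially the argument used there: show that the push-forward $f_{0}\circ(\phi^{\boldsymbol{v}}_{0,t})^{-1}$ solves the transport equation by differentiating the inverse-flow identity, obtain the converse by the method of characteristics, and then observe that the two objective functionals coincide term by term under $I=f_{0}$. Your computations are correct at the smoothness level built into the statement; the only caveat, which you yourself flag, is that for $\boldsymbol{v}\in\mathscr{L}^{2}_{\mathscr{V}}(\Omega)$ the flow is only absolutely continuous in $t$, so the identities $\partial_{t}\psi_{t}(x)=-D\psi_{t}(x)\,\boldsymbol{v}(t,x)$ and $\partial_{t}g=0$ hold for a.e.\ $t$ and the ``constant along characteristics'' step should be concluded via absolute continuity rather than classical differentiation in time.
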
 
Hence, problem \eqref{2.1} can be solved from the perspective of PDE-constrained optimal control as in $\cite{hinze2008optimization, lions1971optimal}$. 

\section{Closure of the PDE constraint}\label{sec3} 
This section proves the closure of the PDE constraint in \eqref{transport equation} when $f_{0} \in  \mathscr{X} = BV(\Omega) \cap \mathscr{L}^{\infty}(\Omega)$ and $\boldsymbol{v} \in \mathscr{L}_{\mathscr{V}}^{2}(\Omega)$. Subsequently, the $\Omega\subset \mathbb{R}^{N}$ with $N \geq 2$ is assumed to be open, bounded and have a Lipschitz boundary. 

We begin by establishing the existence and uniqueness of the following PDE constraint, namely, the transport equation
\begin{equation}\label{transport}
    \left\{\begin{array}{ll} \partial_t f(t, \cdot)+\langle\nabla f(t, \cdot), \boldsymbol{v}(t, \cdot)\rangle_{\mathbb{R}^N}=0 \quad \text{in}~ ]0,1] \times \Omega,\\[2mm]
            f(0, \cdot)=f_{0} \qquad \text { in }~ \Omega. \end{array}\right.
\end{equation}
\subsection{Existence and uniqueness of the solution}
Let us first define the nonlinear solution operator of transport equation \eqref{transport} as follow.
\[
    \begin{array}{c}
        S_{tran}: \mathscr{X} \times \mathscr{L}_{\mathscr{V}}^{2}(\Omega) \longrightarrow Z \\[2mm]
        \quad \left(f_{0}, \boldsymbol{v}\right) \longmapsto f.
    \end{array}
\]

In the light of \cite{chen2011image}, there exists no classic solution to transport equation \eqref{transport} equipped with a non-differentiable initial value. Hence, we define the weak solution to transport equation \eqref{transport} as below.
\begin{definition}[Weak solution]\label{def3.1}
    If $\boldsymbol{v}$ and $f_{0}$ are locally summable functions such that the distributional divergence of $\boldsymbol{v}$ is locally summable, then we say that a function $f : [0,1] \times \Omega \rightarrow \mathbb{R}$ is a weak solution of \eqref{transport} if the following identity holds for $\varphi \in \mathscr{C}_{c}^{\infty}([0,1[\times\Omega) :$
    \begin{equation}\label{weak solution of transport}
        \int_{0}^{1} \int_{\Omega} f\left[\partial_{t} \varphi+\varphi \operatorname{div} \boldsymbol{v}+\boldsymbol{v} \cdot \nabla \varphi\right] d x d t=-\int_{\Omega} f_{0}(x) \varphi(0, x) d x.
    \end{equation}
\end{definition}

Subsequently, the existence of the weak solution of transport equation \eqref{transport} will be proved. Before that, we recall some results that were established in \cite{chen2011image} and \cite{crippa2008flow}.
\begin{theorem}[\cite{chen2011image,crippa2008flow}]\label{th3.1}
    Let $f_{0} \in \mathscr{C}^{1}(\Omega)$ and $\phi^{\boldsymbol{v}}_{t}$ is the associated flow of velocity field $\boldsymbol{v} \in \mathscr{L}_{\mathscr{V}}^{2}(\Omega)$ in $\Omega$ for $t\in [0,1]$. Then, transport equation \eqref{transport} has the unique solution
    \[
        f(t,x) = f_{0} \circ (\phi^{\boldsymbol{v}}_{t})^{-1}(x).
    \]
\end{theorem}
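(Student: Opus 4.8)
The plan is to prove both assertions by the method of characteristics. The one genuine subtlety, which I flag up front as the main obstacle, is that $\boldsymbol{v}$ is only $\mathscr{L}^{2}$ in time: the flow $\phi^{\boldsymbol{v}}_{t}$, its inverse, and the candidate solution are $\mathscr{C}^{1}$ in $x$ (via admissibility of $\mathscr{V}$, \cref{def2.2}, together with \cref{th2.1}) but merely absolutely continuous in $t$. Consequently \eqref{transport} is understood to hold for a.e.\ $t\in\,]0,1]$ and every $x\in\Omega$, with $t\mapsto f(t,x)$ absolutely continuous, and every chain-rule manipulation below is to be read pointwise a.e.\ in $t$, using that $t\mapsto\phi^{\boldsymbol{v}}_{t}(x)$ solves the ODE \eqref{2.4} with an $\mathscr{L}^{2}\subset\mathscr{L}^{1}$ right-hand side and is therefore absolutely continuous. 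This is precisely the class of solutions for which the cited statement \cite{chen2011image,crippa2008flow} is phrased.

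\emph{Existence.} Set $\psi_{t}\triangleq(\phi^{\boldsymbol{v}}_{t})^{-1}=\phi^{\boldsymbol{v}}_{t,0}$ by \eqref{2.7}, and define $f(t,x)\triangleq f_{0}(\psi_{t}(x))$. First I would record the regularity of $\psi_{t}$: since each $\phi^{\boldsymbol{v}}_{t}$ is a $\mathscr{C}^{1}$-diffeomorphism by \cref{th2.1}, differentiating the identity $\phi^{\boldsymbol{v}}_{t}\circ\psi_{t}=\mathrm{Id}$ in $x$ gives $D\psi_{t}(x)=\bigl[D\phi^{\boldsymbol{v}}_{t}(\psi_{t}(x))\bigr]^{-1}$, so $\psi_{t}\in\mathscr{C}^{1}(\Omega,\Omega)$, and differentiating the same identity in the time variable (using $\partial_{t}\phi^{\boldsymbol{v}}_{t}=\boldsymbol{v}(t,\phi^{\boldsymbol{v}}_{t}(\cdot))$ from \eqref{2.4}) gives $\partial_{t}\psi_{t}(x)=-D\psi_{t}(x)\,\boldsymbol{v}(t,x)$ for a.e.\ $t$. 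The chain rule then yields $\partial_{t}f(t,x)=-\nabla f_{0}(\psi_{t}(x))\cdot D\psi_{t}(x)\boldsymbol{v}(t,x)$ and $\nabla f(t,x)=D\psi_{t}(x)^{\mathsf{T}}\nabla f_{0}(\psi_{t}(x))$, whence $\partial_{t}f+\langle\nabla f,\boldsymbol{v}\rangle_{\mathbb{R}^{N}}=0$ a.e., while $f(0,\cdot)=f_{0}$ because $\psi_{0}=\mathrm{Id}$. An equivalent, slightly shorter route is to observe that $f$ is constant along characteristics: by \cref{pro2.13}, $f(t,\phi^{\boldsymbol{v}}_{t}(x_{0}))=f_{0}(\psi_{t}\circ\phi^{\boldsymbol{v}}_{t}(x_{0}))=f_{0}(x_{0})$, so $0=\frac{d}{dt}f(t,\phi^{\boldsymbol{v}}_{t}(x_{0}))=\partial_{t}f+\langle\nabla f,\boldsymbol{v}\rangle_{\mathbb{R}^{N}}$ evaluated along the curve, and since $\phi^{\boldsymbol{v}}_{t}$ maps onto $\Omega$ the equation holds at every point.

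\emph{Uniqueness.} If $f_{1},f_{2}$ are two solutions in the above class, then $w\triangleq f_{1}-f_{2}$ solves \eqref{transport} with $w(0,\cdot)=0$. For fixed $x_{0}\in\Omega$, along the characteristic $t\mapsto\phi^{\boldsymbol{v}}_{t}(x_{0})$ one has $\frac{d}{dt}w(t,\phi^{\boldsymbol{v}}_{t}(x_{0}))=\partial_{t}w+\langle\nabla w,\boldsymbol{v}\rangle_{\mathbb{R}^{N}}=0$ for a.e.\ $t$, and absolute continuity of this scalar function of $t$ then forces $w(t,\phi^{\boldsymbol{v}}_{t}(x_{0}))=w(0,x_{0})=0$. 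Since $\phi^{\boldsymbol{v}}_{t}:\Omega\to\Omega$ is a bijection (\cref{th2.1}, \cref{pro2.13}), every $x\in\Omega$ is of the form $\phi^{\boldsymbol{v}}_{t}(x_{0})$, hence $w\equiv0$. The only place where I expect to have to be careful is, as noted, justifying the time-differentiations pointwise a.e.\ rather than classically, and differentiating the inverse-flow identity in the initial-time slot — the rest is the routine transport-along-characteristics computation imported from \cite{chen2011image,crippa2008flow}.
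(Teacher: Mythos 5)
The paper gives no proof of this statement at all—it is imported verbatim from \cite{chen2011image,crippa2008flow}—and your characteristics argument is precisely the standard proof used in those references: pushing $f_0$ forward by the inverse flow, verifying the PDE via $\partial_t\psi_t=-D\psi_t\,\boldsymbol{v}$, and obtaining uniqueness by showing the difference of two solutions is constant along characteristics. Your treatment is correct, and your explicit caveat that the flow is only absolutely continuous in $t$ (so the equation and all chain rules hold for a.e.\ $t$) is exactly the right point to flag given that $\boldsymbol{v}$ is merely $\mathscr{L}^2$ in time.
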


\begin{theorem}[\cite{chen2011image}]\label{th3.2}
  Let $f_{0} \in BV(\Omega)$ and  $\rho_{\varepsilon}$ be the mollifier in  \cref{define2.7}. Furthermore, $\phi$ and $\phi^{-1}$ are diffeomorphisms and Lipschitz continuous in $\Omega$. Then, the sequence $\{(f_{0} \ast \rho_{\varepsilon})\circ \phi\}_{\varepsilon}$ converges weakly$^{\star}$ to $f_{0} \circ \phi$ in $BV(\Omega)$ as $\varepsilon \rightarrow 0$.
\end{theorem}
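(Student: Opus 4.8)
The plan is to reduce everything to the characterization of weak$^{\star}$ convergence in \cref{th2.6}: it suffices to show that the family $g_{\varepsilon} \triangleq (f_{0} * \rho_{\varepsilon}) \circ \phi$ is bounded in $BV(\Omega)$ and converges to $f_{0} \circ \phi$ in $\mathscr{L}^{1}(\Omega)$. Two preliminary reductions will be used throughout. Since $\phi$ is a diffeomorphism of $\Omega$ onto itself, $\phi(\Omega) = \Omega$; and since $\phi, \phi^{-1}$ are Lipschitz, $D\phi, D\phi^{-1}$ exist a.e.\ and are essentially bounded, so the change-of-variables formula for bi-Lipschitz maps applies and the Jacobians $|\det D\phi|$, $|\det D\phi^{-1}|$ are bounded a.e.\ (for instance via \cref{Hadamard}). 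Moreover, since $\Omega$ is bounded with Lipschitz boundary, there is a bounded extension operator $BV(\Omega) \to BV(\mathbb{R}^{N})$; I would fix an extension $\widetilde{f}_{0}$ of $f_{0}$ with $|D\widetilde{f}_{0}|(\mathbb{R}^{N}) \leq C\,|Df_{0}|(\Omega)$, so that $f_{0} * \rho_{\varepsilon} \triangleq \widetilde{f}_{0} * \rho_{\varepsilon}$ is a well-defined smooth function on all of $\Omega$ that agrees on $\Omega_{\varepsilon}$ with the mollification of $f_{0}$ itself, while $f_{0}\circ\phi = \widetilde{f}_{0}\circ\phi$ a.e.\ on $\Omega$.

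For the $\mathscr{L}^{1}$-convergence, note first that $f_{0}*\rho_{\varepsilon}$ is smooth, hence $g_{\varepsilon}$ is Lipschitz and in particular lies in $BV(\Omega)$. Changing variables $y = \phi(x)$ gives
\[
\|g_{\varepsilon} - f_{0}\circ\phi\|_{\mathscr{L}^{1}(\Omega)} = \int_{\Omega} |f_{0}*\rho_{\varepsilon} - \widetilde{f}_{0}|(y)\, |\det D\phi^{-1}(y)|\, dy \leq \||\det D\phi^{-1}|\|_{\mathscr{L}^{\infty}(\Omega)}\, \|\widetilde{f}_{0}*\rho_{\varepsilon} - \widetilde{f}_{0}\|_{\mathscr{L}^{1}(\mathbb{R}^{N})},
\]
and the right-hand side tends to $0$ by the standard approximation property of mollifiers.

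For the uniform $BV$-bound, since $g_{\varepsilon}$ is Lipschitz the chain rule gives $\nabla g_{\varepsilon}(x) = D\phi(x)^{\top}\,\nabla(f_{0}*\rho_{\varepsilon})(\phi(x))$ for a.e.\ $x$, so, after the same change of variables,
\[
|Dg_{\varepsilon}|(\Omega) = \int_{\Omega} \big| D\phi(x)^{\top}\nabla(f_{0}*\rho_{\varepsilon})(\phi(x)) \big|\, dx \leq \|D\phi\|_{\mathscr{L}^{\infty}(\Omega)} \int_{\Omega} \big|\nabla(f_{0}*\rho_{\varepsilon})\big|(y)\, |\det D\phi^{-1}(y)|\, dy .
\]
By \cref{pro2.4} applied on $\mathbb{R}^{N}$, $\nabla(f_{0}*\rho_{\varepsilon}) = D\widetilde{f}_{0}*\rho_{\varepsilon}$ on $\Omega$, and by \cref{th2.5} with $E = \Omega$, $\int_{\Omega} |D\widetilde{f}_{0}*\rho_{\varepsilon}|\, dy \leq |D\widetilde{f}_{0}|(\mathbb{R}^{N}) \leq C\,|Df_{0}|(\Omega)$. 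Hence $|Dg_{\varepsilon}|(\Omega)$ is bounded by a constant independent of $\varepsilon$; combined with the previous paragraph, $\{g_{\varepsilon}\}_{\varepsilon}$ is bounded in $BV(\Omega)$, and $f_{0}\circ\phi \in BV(\Omega)$ by lower semicontinuity of the total variation under $\mathscr{L}^{1}$-convergence (or by \cref{th2.7}). Then \cref{th2.6} yields that $g_{\varepsilon}$ converges weakly$^{\star}$ to $f_{0}\circ\phi$ in $BV(\Omega)$, as claimed.

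The step I expect to be the main obstacle is the uniform $BV$-bound up to the boundary: \cref{pro2.4} and \cref{th2.5} only control the mollified gradient on the shrunken domain $\Omega_{\varepsilon}$, so without first extending $f_{0}$ across $\partial\Omega$ the total variation of $f_{0}*\rho_{\varepsilon}$ in the boundary layer $\Omega\setminus\Omega_{\varepsilon}$ need not stay bounded as $\varepsilon\to 0$, and it is exactly here that the Lipschitz-boundary hypothesis on $\Omega$ is used. Everything else reduces to the bi-Lipschitz change of variables together with the essential bounds on $D\phi$, $D\phi^{-1}$ and their Jacobians supplied by the hypothesis on $\phi$.
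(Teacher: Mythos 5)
Your argument is correct, and it is the natural one; note, however, that the paper itself does not prove this statement but imports it from \cite{chen2011image}, so the comparison is with the cited result and with the closely related machinery the paper develops later. Your route --- reduce to \cref{th2.6} by proving $\mathscr{L}^{1}(\Omega)$-convergence via the bi-Lipschitz change of variables and a uniform bound on $|Dg_{\varepsilon}|(\Omega)$ via the a.e.\ chain rule, \cref{pro2.4} and \cref{th2.5} --- is exactly the standard scheme, and your decision to mollify a $BV(\mathbb{R}^{N})$-extension of $f_{0}$ is a clean way to get the uniform bound up to $\partial\Omega$; it sidesteps the boundary-layer estimate that the paper performs explicitly in the analogous computation of \cref{lem3.9}, where $f_{0}\ast\rho_{\varepsilon}$ is the convolution of $f_{0}$ extended by zero and the integral of $|\nabla(f_{0}\ast\rho_{\varepsilon})|$ over $\Omega\setminus\Omega_{\varepsilon}$ is handled separately. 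Two small points deserve attention. First, the extension estimate should read $|D\widetilde{f}_{0}|(\mathbb{R}^{N})\leq C\|f_{0}\|_{BV(\Omega)}$ (the $\mathscr{L}^{1}$ part of the norm cannot be dropped); this does not affect the conclusion, since only an $\varepsilon$-independent bound is needed. Second, your redefinition $f_{0}\ast\rho_{\varepsilon}\triangleq\widetilde{f}_{0}\ast\rho_{\varepsilon}$ differs on the layer $\Omega\setminus\Omega_{\varepsilon}$ from the convention used elsewhere in the paper (convolution with $f_{0}$ extended by zero, as in \cref{lem3.9} and the proof of \cref{th3.13.1}); to prove the statement in the form actually used, either observe that the zero extension of $f_{0}$ across the Lipschitz boundary is itself in $BV(\mathbb{R}^{N})$, so your argument runs verbatim with that extension, or note that the two regularizations agree on $\Omega_{\varepsilon}$ and are uniformly $BV$-bounded, hence have the same weak$^{\star}$ limit. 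With either fix the proof is complete.
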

 
\begin{theorem}[\cite{chen2011image}]\label{th3.3}
  Let $f_{0} \in BV(\Omega)$ and $\rho_{\varepsilon}$ be the mollifier in \cref{define2.7}. Furthermore, $\phi_{t}$ and $\phi^{-1}_{t}$ are diffeomorphisms in $\Omega$ for $t\in [0,1]$. Moreover, $\phi(\cdot,x)$ is absolutely continuous in $[0,1]$ for $x \in \Omega$. Define
  \[
      f_{\varepsilon}(t,x) \triangleq (f_{0} \ast \rho_{\varepsilon})\circ \phi(t,x).
  \]
  Then, $f_{\varepsilon} \in \mathscr{C}([0,1];BV(\Omega))$.
\end{theorem}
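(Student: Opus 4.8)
The plan is to fix $\varepsilon>0$ and reduce the statement to a continuity property of the composition with the fixed smooth function $g\triangleq f_{0}\ast\rho_{\varepsilon}$. Extending $f_{0}$ by zero outside $\Omega$ (harmless since $f_{0}\in BV(\Omega)\subset\mathscr{L}^{1}(\Omega)$), the convolution $g$ lies in $\mathscr{C}^{\infty}(\mathbb{R}^{N})$ and is globally Lipschitz, with $\|g\|_{\mathscr{L}^{\infty}}\le\|f_{0}\|_{\mathscr{L}^{1}(\Omega)}\|\rho_{\varepsilon}\|_{\mathscr{L}^{\infty}}$ and $\|\nabla g\|_{\mathscr{L}^{\infty}}=\|f_{0}\ast\nabla\rho_{\varepsilon}\|_{\mathscr{L}^{\infty}}\le\|f_{0}\|_{\mathscr{L}^{1}(\Omega)}\|\nabla\rho_{\varepsilon}\|_{\mathscr{L}^{\infty}}$. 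Since $\phi_{t}$ is a $\mathscr{C}^{1}$ diffeomorphism of the bounded set $\Omega$, the classical chain rule gives $f_{\varepsilon}(t,\cdot)=g\circ\phi_{t}\in\mathscr{C}^{1}(\Omega)$ with $\nabla(g\circ\phi_{t})(x)=D\phi_{t}(x)^{\top}\nabla g(\phi_{t}(x))$, so $f_{\varepsilon}(t,\cdot)\in W^{1,1}(\Omega)\subset BV(\Omega)$ for each $t$ (cf.\ \cref{th3.2} with $\phi=\phi_{t}$). It then remains to show that $t\mapsto g\circ\phi_{t}$ is continuous from $[0,1]$ into $BV(\Omega)$ with its norm $\|h\|_{BV(\Omega)}=\|h\|_{\mathscr{L}^{1}(\Omega)}+|Dh|(\Omega)$; I would fix $t$ and let $s\to t$.

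For the $\mathscr{L}^{1}$ part, absolute continuity of $\tau\mapsto\phi(\tau,x)$ (equivalently, the joint $\mathscr{C}^{1}$ regularity of the flow from \cref{th2.1}) gives $\phi_{s}(x)\to\phi_{t}(x)$ for every $x$, hence $g(\phi_{s}(x))\to g(\phi_{t}(x))$ by continuity of $g$; since $|g\circ\phi_{s}-g\circ\phi_{t}|\le 2\|g\|_{\mathscr{L}^{\infty}}\mathbf{1}_{\Omega}\in\mathscr{L}^{1}(\Omega)$ as $\Omega$ is bounded, dominated convergence yields $\|g\circ\phi_{s}-g\circ\phi_{t}\|_{\mathscr{L}^{1}(\Omega)}\to 0$. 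Combined with the uniform bound $\sup_{t}\|g\circ\phi_{t}\|_{BV(\Omega)}<\infty$ established next, \cref{th2.6} would already give weak$^{\star}$ continuity; the remaining work is to upgrade this to norm continuity.

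For the total variation part, linearity of the distributional derivative and the chain rule give, since $g\circ\phi_{s}-g\circ\phi_{t}\in W^{1,1}(\Omega)$,
\[
|D(g\circ\phi_{s})-D(g\circ\phi_{t})|(\Omega)=\int_{\Omega}\bigl|D\phi_{s}(x)^{\top}\nabla g(\phi_{s}(x))-D\phi_{t}(x)^{\top}\nabla g(\phi_{t}(x))\bigr|\,dx .
\]
The integrand tends to $0$ for a.e.\ $x$: $\nabla g(\phi_{s}(x))\to\nabla g(\phi_{t}(x))$ by continuity of $\nabla g$ and of $s\mapsto\phi_{s}(x)$, and $D\phi_{s}(x)\to D\phi_{t}(x)$ since $(t,x)\mapsto D\phi_{t}(x)$ is continuous by \cref{th2.1}. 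To pass to the limit one needs a single $\mathscr{L}^{1}(\Omega)$ majorant, and this is where admissibility of $\mathscr{V}$ enters: a Grönwall estimate along \eqref{2.4}, with the embedding $\mathscr{V}\hookrightarrow\mathscr{C}_{0}^{1}(\Omega,\mathbb{R}^{N})$ and Cauchy--Schwarz, yields $\sup_{t\in[0,1]}\|D\phi_{t}\|_{\mathscr{L}^{\infty}(\Omega)}\le\exp\!\bigl(C\!\int_{0}^{1}\|\boldsymbol{v}(\tau,\cdot)\|_{\mathscr{V}}\,d\tau\bigr)=:C_{0}<\infty$. Hence the integrand is dominated by $2C_{0}\|\nabla g\|_{\mathscr{L}^{\infty}}\mathbf{1}_{\Omega}\in\mathscr{L}^{1}(\Omega)$ (the same bound supplies the uniform $BV$ bound used above), and dominated convergence gives $|D(g\circ\phi_{s})-D(g\circ\phi_{t})|(\Omega)\to 0$. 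Adding the two estimates yields $\|g\circ\phi_{s}-g\circ\phi_{t}\|_{BV(\Omega)}\to 0$, i.e.\ $f_{\varepsilon}\in\mathscr{C}([0,1];BV(\Omega))$.

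The main obstacle is the total variation part, and within it the spatial Jacobian $D\phi_{t}$: differentiating $g\circ\phi_{t}$ needs the $\mathscr{C}^{1}$ regularity of both factors, and making the dominated-convergence argument work for the $BV$ seminorm requires the uniform-in-$t$ $\mathscr{L}^{\infty}$ bound together with the joint continuity of $D\phi_{t}$ — the substantive input, drawn from the flow theory of \cref{th2.1} and the admissibility of $\mathscr{V}$. The only other technical nuisance, giving meaning to $f_{0}\ast\rho_{\varepsilon}$ near $\partial\Omega$, is dispatched by the zero-extension of $f_{0}$, consistently with \cref{pro2.4,th2.5}.
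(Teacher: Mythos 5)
There is nothing in the paper to compare against here: \cref{th3.3} is imported verbatim from \cite{chen2011image} and the paper gives no proof of it, so your argument stands or falls on its own. On its own terms it is essentially sound \emph{in the setting in which the paper actually uses the result}, namely $\phi=\phi^{\boldsymbol{v}}$ the flow of an admissible field from \cref{th2.1}: fixing $g=f_{0}\ast\rho_{\varepsilon}$ (zero extension), splitting $\|g\circ\phi_{s}-g\circ\phi_{t}\|_{BV}$ into the $\mathscr{L}^{1}$ part and the $W^{1,1}$-gradient part, and dominating both via $\|g\|_{\infty}$, $\|\nabla g\|_{\infty}$ and the Gr\"onwall bound $\sup_{t}\|D\phi_{t}\|_{\infty}<\infty$ is exactly the kind of estimate the paper itself runs in \cref{lem3.9}. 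You even prove more than the paper needs: norm continuity in $BV$, whereas in the proof of \cref{transport solution} the theorem is only invoked to get uniform boundedness in $\mathscr{L}^{\infty}([0,1];BV(\Omega))$.

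The one point you should make explicit is that your proof of the total-variation part uses hypotheses that are not in the statement. The theorem as written assumes only that each $\phi_{t}$, $\phi_{t}^{-1}$ is a diffeomorphism and that $t\mapsto\phi(t,x)$ is absolutely continuous; your argument additionally needs $(t,x)\mapsto D\phi_{t}(x)$ continuous and $\sup_{t}\|D\phi_{t}\|_{\mathscr{L}^{\infty}}<\infty$, which you pull from \cref{th2.1} and the admissibility of $\mathscr{V}$. This is not cosmetic: under the literal hypotheses alone the $BV$-norm conclusion can fail --- take, say, $\phi_{s}(x)=x+\tfrac12\eta(s)\sin\!\bigl(x_{1}/\eta(s)\bigr)e_{1}$ with $\eta(s)\to0$ as $s\to t$, for which $\phi_{s}\to\mathrm{id}$ pointwise and each $\phi_{s}$ is a diffeomorphism, yet $D\phi_{s}$ oscillates and $|D(g\circ\phi_{s})-D(g\circ\phi_{t})|(\Omega)$ does not tend to $0$. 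Also, ``absolute continuity of $\tau\mapsto\phi(\tau,x)$'' is not \emph{equivalent} to the joint $\mathscr{C}^{1}$ regularity of the flow, as your parenthetical suggests; it is strictly weaker. So state up front that you prove the theorem for $\phi=\phi^{\boldsymbol{v}}$ with $\boldsymbol{v}\in\mathscr{L}^{2}_{\mathscr{V}}(\Omega)$ (which is all the paper requires downstream), and the proof is fine.
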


Now, we can prove the existence of the weak solution of transport equation \eqref{transport} without the condition $\operatorname{div}\boldsymbol{v} = 0$. 
\begin{theorem}[Existence of weak solution]\label{transport solution}
 Let $\boldsymbol{v} \in \mathscr{L}_{\mathscr{V}}^{2}(\Omega)$ with $\mathrm{div} \boldsymbol{v} \in \mathscr{L}^{N}([0,1] \times \Omega)$ and  $f_{0} \in BV(\Omega)$. Furthermore, $\phi^{\boldsymbol{v}}_{t}$ is the associated flow of velocity field $\boldsymbol{v} \in \mathscr{L}_{\mathscr{V}}^{2}(\Omega)$ in $\Omega$ for $t\in [0,1]$. Define
  \[
      f(t,x) \triangleq f_{0} \circ (\phi^{\boldsymbol{v}}_{t})^{-1}(x).
  \]
  Then, $f \in \mathscr{L}^{\infty}([0,1];BV(\Omega))$ and is a weak solution to  transport equation \eqref{transport}.
\end{theorem}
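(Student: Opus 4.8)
The plan is to regularise the non-smooth initial datum, solve the transport equation classically for the regularised data, and pass to the limit. Fix the standard mollifier $\rho_{\varepsilon}$ from \cref{define2.7} and, after extending $f_{0}$ to an element of $BV(\mathbb{R}^{N})$, set $f_{0}^{\varepsilon}\triangleq f_{0}\ast\rho_{\varepsilon}$, which is smooth with $f_{0}^{\varepsilon}\to f_{0}$ in $\mathscr{L}^{1}(\Omega)$ and $|Df_{0}^{\varepsilon}|(\Omega)$ bounded uniformly in $\varepsilon$ (standard mollification properties together with \cref{th2.5}). Put $f_{\varepsilon}(t,x)\triangleq f_{0}^{\varepsilon}\circ(\phi^{\boldsymbol{v}}_{t})^{-1}(x)$. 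By \cref{th3.1} this is the unique classical solution of \eqref{transport} with datum $f_{0}^{\varepsilon}$; concretely, $(\phi^{\boldsymbol{v}}_{t})^{-1}=\phi^{\boldsymbol{v}}_{t,0}$ is $\mathscr{C}^{1}$ in $x$ and absolutely continuous in $t$ (\cref{th2.1}, \eqref{2.7}), and differentiating $\phi^{\boldsymbol{v}}_{0,t}\circ\phi^{\boldsymbol{v}}_{t,0}=\mathrm{Id}$ in $t$ and in $x$ yields $\partial_{t}\phi^{\boldsymbol{v}}_{t,0}(x)=-D_{x}\phi^{\boldsymbol{v}}_{t,0}(x)\,\boldsymbol{v}(t,x)$ for a.e.\ $t$, whence the chain rule gives $\partial_{t}f_{\varepsilon}+\langle\nabla f_{\varepsilon},\boldsymbol{v}\rangle_{\mathbb{R}^{N}}=0$ for a.e.\ $t$. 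Multiplying by $\varphi\in\mathscr{C}_{c}^{\infty}([0,1[\times\Omega)$ and integrating by parts in $t$ and $x$ (legitimate since $f_{\varepsilon}$ is continuous on $[0,1]\times\Omega$ and absolutely continuous in $t$) shows that each $f_{\varepsilon}$ satisfies \eqref{weak solution of transport} with $(f_{\varepsilon},f_{0}^{\varepsilon})$ in place of $(f,f_{0})$.

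Next I would record the uniform estimates. Because $\mathscr{V}$ is admissible (\cref{def2.2}), $\|\boldsymbol{v}(t,\cdot)\|_{1,\infty}\leq C\|\boldsymbol{v}(t,\cdot)\|_{\mathscr{V}}$, hence $\int_{0}^{1}\|\boldsymbol{v}(t,\cdot)\|_{1,\infty}\,dt<\infty$ by Cauchy--Schwarz and $\boldsymbol{v}\in\mathscr{L}_{\mathscr{V}}^{2}(\Omega)$. A Gronwall argument for $D\phi^{\boldsymbol{v}}_{t}$ and $(D\phi^{\boldsymbol{v}}_{t})^{-1}$, together with the Hadamard inequality \cref{Hadamard} (and Liouville's formula, which involves $\operatorname{div}\boldsymbol{v}$), then produces a constant $C_{\boldsymbol{v}}\geq 1$ with $C_{\boldsymbol{v}}^{-1}\leq|\det D\phi^{\boldsymbol{v}}_{t}|\leq C_{\boldsymbol{v}}$ and $\|(D\phi^{\boldsymbol{v}}_{t})^{-1}\|_{\mathscr{L}^{\infty}(\Omega)}\leq C_{\boldsymbol{v}}$, uniformly for $t\in[0,1]$. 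Performing the change of variables $x=\phi^{\boldsymbol{v}}_{t}(y)$ and using the classical chain rule for the smooth $f_{0}^{\varepsilon}$, one obtains $\|f_{\varepsilon}(t,\cdot)\|_{\mathscr{L}^{1}(\Omega)}\leq C_{\boldsymbol{v}}\|f_{0}\|_{\mathscr{L}^{1}(\Omega)}$ and $|Df_{\varepsilon}(t,\cdot)|(\Omega)\leq C_{\boldsymbol{v}}^{2}|Df_{0}^{\varepsilon}|(\Omega)\leq C'\|f_{0}\|_{BV(\Omega)}$, uniformly in $t$ and $\varepsilon$; applying the same change of variables to $f_{\varepsilon}(t,\cdot)-f(t,\cdot)=(f_{0}^{\varepsilon}-f_{0})\circ(\phi^{\boldsymbol{v}}_{t})^{-1}$ gives $\sup_{t\in[0,1]}\|f_{\varepsilon}(t,\cdot)-f(t,\cdot)\|_{\mathscr{L}^{1}(\Omega)}\leq C_{\boldsymbol{v}}\|f_{0}^{\varepsilon}-f_{0}\|_{\mathscr{L}^{1}(\Omega)}\to 0$, i.e.\ $f_{\varepsilon}\to f$ in $\mathscr{C}([0,1];\mathscr{L}^{1}(\Omega))$ (each $f_{\varepsilon}$ being continuous into $BV(\Omega)$, hence into $\mathscr{L}^{1}(\Omega)$, by \cref{th3.3}).

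Then I would pass to the limit in \eqref{weak solution of transport}. For fixed $\varphi$, the integrand $\partial_{t}\varphi+\varphi\operatorname{div}\boldsymbol{v}+\boldsymbol{v}\cdot\nabla\varphi$ belongs to $\mathscr{L}^{1}([0,1];\mathscr{L}^{\infty}(\Omega))$, since $\partial_{t}\varphi$ and $\nabla\varphi$ are bounded with compact support while $\boldsymbol{v}(t,\cdot)$ and $\operatorname{div}\boldsymbol{v}(t,\cdot)$ lie in $\mathscr{L}^{\infty}(\Omega)$ with $\mathscr{L}^{2}([0,1])\subset\mathscr{L}^{1}([0,1])$ norms by admissibility; in particular the integrand is integrable on $[0,1]\times\Omega$, consistent with \cref{def3.1} and the hypothesis $\operatorname{div}\boldsymbol{v}\in\mathscr{L}^{N}([0,1]\times\Omega)$. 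Pairing this integrand with $f_{\varepsilon}-f\to 0$ in $\mathscr{C}([0,1];\mathscr{L}^{1}(\Omega))$ makes the left-hand side converge, and $f_{0}^{\varepsilon}\to f_{0}$ in $\mathscr{L}^{1}(\Omega)$ handles the right-hand side, so $f$ satisfies \eqref{weak solution of transport}, i.e.\ it is a weak solution of \eqref{transport}. Finally, \cref{th3.2} (applied with $\phi=\phi^{\boldsymbol{v}}_{t,0}$, a Lipschitz diffeomorphism) gives $f_{\varepsilon}(t,\cdot)$ converging weakly$^{\star}$ to $f(t,\cdot)$ in $BV(\Omega)$ for each $t$, so lower semicontinuity of the total variation and the uniform bound of the previous paragraph yield $\sup_{t\in[0,1]}\|f(t,\cdot)\|_{BV(\Omega)}\leq C\|f_{0}\|_{BV(\Omega)}<\infty$; combined with the continuity $t\mapsto f(t,\cdot)\in\mathscr{L}^{1}(\Omega)$ established above, which provides the required measurability, this gives $f\in\mathscr{L}^{\infty}([0,1];BV(\Omega))$.

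I expect the main obstacle to be the uniform-in-$(t,\varepsilon)$ $BV$ estimate in the second step: it hinges on rigorously justifying the change of variables and the chain rule for the mollified approximants, and on extracting the bounds $\|(D\phi^{\boldsymbol{v}}_{t})^{-1}\|_{\mathscr{L}^{\infty}(\Omega)}$ and $\|\det D\phi^{\boldsymbol{v}}_{t}\|_{\mathscr{L}^{\infty}(\Omega)}$ uniformly in $t$ purely from the admissibility of $\mathscr{V}$, the $\mathscr{L}^{2}$-in-time regularity of $\boldsymbol{v}$, and $\operatorname{div}\boldsymbol{v}\in\mathscr{L}^{N}$. Secondary but nontrivial points are the behaviour of the mollification near $\partial\Omega$ (handled via a $BV$-extension) and the measurability of $t\mapsto f(t,\cdot)$ needed for the $\mathscr{L}^{\infty}([0,1];BV(\Omega))$ conclusion.
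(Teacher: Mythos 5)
Your proposal is correct and shares the paper's overall skeleton --- mollify the initial datum, use the explicit representation $(f_{0}\ast\rho_{\varepsilon})\circ(\phi^{\boldsymbol{v}}_{t})^{-1}$ supplied by \cref{th3.1}, and pass to the limit in the weak formulation \eqref{weak solution of transport} --- but your limit passage is genuinely different from the paper's. The paper argues by weak compactness: it uses \cref{th3.2}, \cref{th3.3} and \cref{th2.6} to get uniform boundedness in $\mathscr{L}^{\infty}([0,1];BV(\Omega))$, then extracts (via \cref{th2.7} and the embedding into $\mathscr{L}^{2}([0,1];\mathscr{L}^{\frac{N}{N-1}}(\Omega))$) a subsequence converging weakly, and pairs it against the test integrand, which is where the hypothesis $\operatorname{div}\boldsymbol{v}\in\mathscr{L}^{N}([0,1]\times\Omega)$ enters. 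You instead prove the quantitative stability bound $\sup_{t}\|f_{\varepsilon}(t,\cdot)-f(t,\cdot)\|_{\mathscr{L}^{1}(\Omega)}\leq C_{\boldsymbol{v}}\|f_{0}\ast\rho_{\varepsilon}-f_{0}\|_{\mathscr{L}^{1}(\Omega)}$ by a change of variables with Gronwall/Hadamard control of the Jacobians, so that $f_{\varepsilon}\to f$ strongly in $\mathscr{C}([0,1];\mathscr{L}^{1}(\Omega))$ and the pairing with the integrand (bounded in space thanks to the admissibility of $\mathscr{V}$) converges without any subsequence extraction or duality with $\mathscr{L}^{N}$; in particular, on your route the assumption $\operatorname{div}\boldsymbol{v}\in\mathscr{L}^{N}$ becomes essentially superfluous for existence, since admissibility already makes $\operatorname{div}\boldsymbol{v}(t,\cdot)$ bounded with square-integrable norm in time. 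What the paper's weak-compactness machinery buys is reusability: the same lemmas and subsequence arguments are recycled later for the stability of $S_{tran}$ and the convergence of the constraint (\cref{th3.12}, \cref{th3.13.1}), where no explicit formula for the limit is available; your direct estimate is cleaner and stronger for this particular theorem, and your combination of the uniform $BV$ bound with \cref{th3.2} and lower semicontinuity yields the $\mathscr{L}^{\infty}([0,1];BV(\Omega))$ membership just as the paper does. The only points to watch --- extending $f_{0}$ across $\partial\Omega$ before mollifying so the $BV$ bounds are uniform, and justifying the chain rule and change of variables for the $\mathscr{C}^{1}$ flow --- are handled adequately in your sketch.
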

\begin{proof}
  Let $\rho_{\varepsilon}$ be the mollifier in \cref{define2.7}. Then, we consider a modification of transport equation \eqref{transport} 
  \begin{equation}\label{regularizde transport}
          \left\{\begin{array}{ll} \partial_t f(t, \cdot)+\langle\nabla f(t, \cdot), \boldsymbol{v}(t, \cdot)\rangle_{\mathbb{R}^N}=0 \quad \text{in}~ ]0,1] \times \Omega,\\[2mm]
          f(0, \cdot)=f_{0} \ast \rho_{\varepsilon} \qquad \text { in }~ \Omega.\end{array}\right.
  \end{equation}
  By \cref{th3.1}, the solution to \cref{regularizde transport} exists and is unique, can be written as 
  \[
      f_{\varepsilon}(t,x) = (f_{0} \ast \rho_{\varepsilon}) \circ (\phi^{\boldsymbol{v}}_{t})^{-1}(x).
  \]
  According to \cref{th3.2}, $\{(f_{0} \ast \rho_{\varepsilon}) \circ (\phi^{\boldsymbol{v}}_{t})^{-1}\}_{\varepsilon}$ converges weakly$^{\star}$ to $f_{0} \circ (\phi^{\boldsymbol{v}}_{t})^{-1}$ in $BV(\Omega)$ as $\varepsilon \rightarrow 0$. Let $f(t,x) = f_{0} \circ (\phi^{\boldsymbol{v}}_{t})^{-1}(x)$, then $f(t,\cdot) \in BV(\Omega)$ for $t \in [0,1]$.
  Moreover, \cref{th2.6} and \cref{th3.3} show that $\{f_{\varepsilon}\}_{\varepsilon}$ is uniformly bounded in $\mathscr{L}^{\infty}([0,1];BV(\Omega))$. 
  
  Given $t \in [0, 1]$, \cref{th2.7} implies that there exists a subsequence $\{f_{\varepsilon_{k}}(t,\cdot)\}_{\varepsilon_{k}}$ of $\{f_{\varepsilon}(t,\cdot)\}_{\varepsilon}$ converges weakly to $f(t,\cdot)$ in $\mathscr{L}^{\frac{N}{N-1}}(\Omega)$. Then, we derive 
  \[
      \int_{\Omega}f_{\varepsilon_{k}}(t,x)\varphi(x)dx \longrightarrow \int_{\Omega}f(t,x)\varphi(x)dx\quad \text{as}~ \varepsilon_{k} \longrightarrow 0 
  \]
  for $\varphi \in \mathscr{L}^{N}(\Omega)$.
  Setting $(\phi^{\boldsymbol{v}}_{t})^{-1}(x) = y$ and $\varphi(x) = 1$, we observe 
  \[
      \begin{aligned}
          \Big| \int_{\Omega}f_{\varepsilon_{k}}(t,x)dx \Big|
          &=\Big| \int_{\Omega}(f_{0} \ast \rho_{\varepsilon_{k}}) \circ (\phi^{\boldsymbol{v}}_{t})^{-1}(x)dx \Big|\\[2mm]
          &=\Big| \int_{\Omega}f_{0} \ast \rho_{\varepsilon_{k}}(y) |\operatorname{det}\nabla \phi^{\boldsymbol{v}}_{t}(y)|dy \Big|\\[2mm]
          &\leq\|\operatorname{det}\nabla\phi^{\boldsymbol{v}}_{t}\|_{\mathscr{L}^{\infty}(\Omega)}\int_{\Omega}\big|f_{0} \ast \rho_{\varepsilon_{k}}(y) \big| dy \\[2mm]
%          &=\|\operatorname{det}\nabla\phi^{\boldsymbol{v}}_{t}\|_{\mathscr{L}^{\infty}(\Omega)} \int_{\Omega} \Big|\int_{\Omega}f_{0}(z)  \rho_{\varepsilon_{k}}(y-z)dz \Big| dy\\[2mm]
%          &\leq\|\operatorname{det}\nabla\phi^{\boldsymbol{v}}_{t}\|_{\mathscr{L}^{\infty}(\Omega)} \int_{\Omega} |f_{0}(z)|\int_{\Omega}  |\rho_{\varepsilon_{k}}(y-z)|dz  dy\\[2mm]
          &\leq\|\operatorname{det}\nabla\phi^{\boldsymbol{v}}_{t}\|_{\mathscr{L}^{\infty}(\Omega)}\|f_{0}\|_{\mathscr{L}^{1}(\Omega)}.
      \end{aligned}
  \]
  \Cref{Hadamard} implies that
  \[
      \|\det\nabla\phi^{\boldsymbol{v}}_{t}\|_{\mathscr{L}^{\infty}(\Omega)} \leq N^{\frac{N}{2}} Lip(\phi^{\boldsymbol{v}}_{t})^{N},
  \]
  where $Lip(\phi^{\boldsymbol{v}}_{t})$ denotes the Lipschitz constant of $\phi^{\boldsymbol{v}}_{t}$.
  This means 
  \[
  \Big|\int_{\Omega}f_{\varepsilon_{k}}(t,x)dx \Big| \leq N^{\frac{N}{2}} Lip(\phi^{\boldsymbol{v}}_{t})^{N} \|f_{0}\|_{\mathscr{L}^{1}(\Omega)}.
  \]
  Hence, the Lebesgue's dominated convergence theorem yields
  \[
      \lim_{\varepsilon_{k}\rightarrow \infty}\int_{0}^{1}\int_{\Omega}f_{\varepsilon_{k}}(t,x)dxdt 
%      = \int_{0}^{1}\lim_{\varepsilon_{k}\rightarrow \infty}\int_{\Omega}f_{\varepsilon_{k}}(t,x)dxdt 
      = \int_{0}^{1}\int_{\Omega}f(t,x)dxdt.
  \]
The fact that $\mathscr{L}^{\infty}([0,1];BV(\Omega))$ is continuously embedded into $\mathscr{L}^{2}([0,1]; \mathscr{L}^{\frac{N}{N-1}}(\Omega))$ implies that there exists a subsequence $\{f_{\varepsilon_{k_{j}}}\}_{\varepsilon_{k_{j}}}\subset \{f_{\varepsilon_{k}}\}_{\varepsilon_{k}}$  converging  weakly to $f$ in $\mathscr{L}^{2}([0,1];\mathscr{L}^{\frac{N}{N-1}}(\Omega))$. Hence, we obtain 
  \begin{equation}\label{3.6}
  \begin{aligned}
      \int_{0}^{1} &\int_{\Omega} f_{\varepsilon_{k_{j}}}\left[\partial_{t} \varphi+\varphi \operatorname{div} \boldsymbol{v}+\boldsymbol{v} \cdot \nabla \varphi\right] d x d t \\[2mm]
      &\longrightarrow \int_{0}^{1} \int_{\Omega} f\left[\partial_{t} \varphi+\varphi \operatorname{div} \boldsymbol{v}+\boldsymbol{v} \cdot \nabla \varphi\right] d x d t 
  \end{aligned}
  \end{equation}
  for $\varphi \in \mathscr{C}_{c}^{\infty}([0,1[\times\Omega)$
  as $\varepsilon_{k_{j}} \rightarrow 0$.

  In addition, $f_{\varepsilon_{k_{j}}}$ is also the weak solution of the regularized transport equation  \eqref{regularizde transport}, which yields 
  \begin{equation}\label{3.7}
      \int_{0}^{1} \int_{\Omega} f_{\varepsilon_{k_{j}}}\left[\partial_{t} \varphi+\varphi \operatorname{div} \boldsymbol{v}+\boldsymbol{v} \cdot \nabla \varphi\right] d x d t = -\int_{\Omega} f_{0} \ast \rho_{\varepsilon_{k_{j}}}(x) \varphi(0, x) d x.
  \end{equation}
  Due to the property of approximate identity, one has
  \begin{equation}\label{3.8}
      -\int_{\Omega} f_{0} \ast \rho_{\varepsilon_{k_{j}}}(x) \varphi(0, x) d x \longrightarrow -\int_{\Omega} f_{0}(x) \varphi(0, x) d x\quad \text{as}~ \varepsilon_{k_{j}} \longrightarrow 0.
  \end{equation}
  Using \eqref{3.6}, \eqref{3.7} and \eqref{3.8}, we immediately deduce the following equality
  \[
      \int_{0}^{1} \int_{\Omega} f\left[\partial_{t} \varphi+\varphi \operatorname{div} \boldsymbol{v}+\boldsymbol{v} \cdot \nabla \varphi\right] d x d t = -\int_{\Omega} f_{0}(x) \varphi(0, x) d x.
  \]
  Hence, $f$ is a weak solution of transport equation \eqref{transport}.
\end{proof}
\begin{remark}
  Regarding the uniqueness of the weak solution to transport equation \eqref{transport}, it can be false without $\mathrm{div} \boldsymbol{v} = 0$ (see \cite{ambrosio2008transport}).
\end{remark}

The following theorem demonstrates that \eqref{3.5} is a continuous function with respect to time $t$ in $\|\cdot\|_{\mathscr{L}^{\infty}(\Omega)}$.
\begin{theorem}\label{th3.5}
Let $f_{0} \in \mathscr{L}^{\infty}(\Omega)$ and $\rho_{\varepsilon}$ be the mollifier in \cref{define2.7}. Furthermore, $\phi^{\boldsymbol{v}}_{t}$ is the associated flow of velocity field $\boldsymbol{v} \in \mathscr{L}_{\mathscr{V}}^{2}(\Omega)$ in $\Omega$ for $t\in [0,1]$. Define
    \begin{equation}\label{3.5}
        f_{\varepsilon}(t,x) \triangleq (f_{0} \ast \rho_{\varepsilon} )\circ (\phi^{\boldsymbol{v}}_{t})^{-1}(x).
    \end{equation}
    Then, $f_{\varepsilon} \in \mathscr{C}([0,1];\mathscr{L}^{\infty}(\Omega))$.
\end{theorem}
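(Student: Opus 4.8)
The plan is to exploit the representation $f_{\varepsilon}(t,\cdot)=g_{\varepsilon}\circ(\phi^{\boldsymbol{v}}_{t})^{-1}$ with $g_{\varepsilon}\triangleq f_{0}\ast\rho_{\varepsilon}$, and to reduce the $\mathscr{L}^{\infty}$-continuity in $t$ to two ingredients: (i) the global Lipschitz continuity of the smooth function $g_{\varepsilon}$, and (ii) a uniform-in-$x$ modulus of continuity for the map $t\mapsto(\phi^{\boldsymbol{v}}_{t})^{-1}$. For (i): since $\Omega$ is bounded we have $f_{0}\in\mathscr{L}^{1}(\Omega)$; extending $f_{0}$ by $0$ to $\mathbb{R}^{N}$ (or, equivalently, working on $\Omega_{\varepsilon}$ as in \cref{pro2.4}), the function $g_{\varepsilon}$ lies in $\mathscr{C}^{\infty}(\mathbb{R}^{N})$ with $\nabla g_{\varepsilon}=f_{0}\ast\nabla\rho_{\varepsilon}$, hence $\|\nabla g_{\varepsilon}\|_{\mathscr{L}^{\infty}}\leq\|f_{0}\|_{\mathscr{L}^{\infty}(\Omega)}\,\|\nabla\rho_{\varepsilon}\|_{\mathscr{L}^{1}(\mathbb{R}^{N})}=:L_{\varepsilon}<\infty$ for each fixed $\varepsilon$, so $g_{\varepsilon}$ is globally $L_{\varepsilon}$-Lipschitz.

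For (ii), I would use the cocycle identity of \cref{pro2.13}. Writing $\psi_{t}\triangleq(\phi^{\boldsymbol{v}}_{t})^{-1}=\phi^{\boldsymbol{v}}_{t,0}$, for $0\leq s\leq t\leq 1$ it gives $\psi_{t}=\psi_{s}\circ\phi^{\boldsymbol{v}}_{t,s}$, whence for every $x\in\Omega$
\[
    |\psi_{t}(x)-\psi_{s}(x)| = \big|\psi_{s}(\phi^{\boldsymbol{v}}_{t,s}(x))-\psi_{s}(x)\big| \leq Lip(\psi_{s})\,\big|\phi^{\boldsymbol{v}}_{t,s}(x)-x\big|.
\]
The displacement is controlled by integrating the flow ODE \eqref{2.4}: since $\phi^{\boldsymbol{v}}_{t,s}=(\phi^{\boldsymbol{v}}_{s,t})^{-1}$ by \cref{pro2.13}, setting $y=\phi^{\boldsymbol{v}}_{t,s}(x)$ gives $x=\phi^{\boldsymbol{v}}_{s,t}(y)=y+\int_{s}^{t}\boldsymbol{v}(\tau,\phi^{\boldsymbol{v}}_{s,\tau}(y))\,d\tau$, so $|\phi^{\boldsymbol{v}}_{t,s}(x)-x|\leq\int_{s}^{t}\|\boldsymbol{v}(\tau,\cdot)\|_{\mathscr{L}^{\infty}(\Omega)}\,d\tau$ uniformly in $x$. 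The uniform bound $\sup_{s\in[0,1]}Lip(\psi_{s})\leq C_{\boldsymbol{v}}<\infty$ follows from a Grönwall estimate for the Jacobian of the inverse flow, based on $\partial_{s}\nabla\phi^{\boldsymbol{v}}_{s}=(\nabla\boldsymbol{v}(s,\phi^{\boldsymbol{v}}_{s}))\,\nabla\phi^{\boldsymbol{v}}_{s}$ together with the embedding $\mathscr{V}\hookrightarrow\mathscr{C}^{1}_{0}(\Omega,\mathbb{R}^{N})$ of \cref{def2.2} and the Cauchy--Schwarz bound $\int_{0}^{1}\|\nabla\boldsymbol{v}(\tau,\cdot)\|_{\infty}\,d\tau\leq C\,\|\boldsymbol{v}\|_{\mathscr{L}_{\mathscr{V}}^{2}(\Omega)}$.

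Combining these, for $0\leq s\leq t\leq 1$,
\[
    \|f_{\varepsilon}(t,\cdot)-f_{\varepsilon}(s,\cdot)\|_{\mathscr{L}^{\infty}(\Omega)} \leq L_{\varepsilon}\,\sup_{x\in\Omega}|\psi_{t}(x)-\psi_{s}(x)| \leq L_{\varepsilon}\,C_{\boldsymbol{v}}\int_{s}^{t}\|\boldsymbol{v}(\tau,\cdot)\|_{\mathscr{L}^{\infty}(\Omega)}\,d\tau .
\]
Since $\tau\mapsto\|\boldsymbol{v}(\tau,\cdot)\|_{\mathscr{L}^{\infty}(\Omega)}\leq C\|\boldsymbol{v}(\tau,\cdot)\|_{\mathscr{V}}$ lies in $\mathscr{L}^{2}([0,1])\subset\mathscr{L}^{1}([0,1])$, the absolute continuity of the Lebesgue integral turns the right-hand side into a modulus of continuity vanishing as $|t-s|\to 0$, which yields $f_{\varepsilon}\in\mathscr{C}([0,1];\mathscr{L}^{\infty}(\Omega))$. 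I expect the only genuine obstacles to be the two ``uniformity'' points: legitimizing $g_{\varepsilon}$ on the range of $\psi_{t}$ (handled by the zero-extension or by restricting to $\Omega_{\varepsilon}$), and — the technical core — extracting the uniform-in-$(s,x)$ Lipschitz and displacement bounds for the inverse flow from the mere $\mathscr{L}^{2}$-in-time regularity of $\boldsymbol{v}$; once the Grönwall bound on $\nabla(\phi^{\boldsymbol{v}}_{s})^{-1}$ is in hand, the rest is Lipschitz composition and absolute continuity of the integral.
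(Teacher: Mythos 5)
Your proposal is correct, and it is a genuinely more quantitative route than the one the paper takes. The paper's proof is soft: it invokes the Carath\'eodory theorem to get absolute continuity of $t\mapsto(\phi^{\boldsymbol{v}}_{t})^{-1}(x)$ for each fixed $x$, notes that $f_{0}\ast\rho_{\varepsilon}$ is continuous, and then directly asserts that the essential supremum of the difference tends to zero as $t\to s$ --- leaving the uniformity in $x$ (which is exactly what the $\mathscr{L}^{\infty}$-norm requires) implicit. You instead make that uniformity explicit: the global Lipschitz bound $L_{\varepsilon}=\|f_{0}\|_{\mathscr{L}^{\infty}}\|\nabla\rho_{\varepsilon}\|_{\mathscr{L}^{1}}$ on the mollified datum, the cocycle identity $\phi^{\boldsymbol{v}}_{t,0}=\phi^{\boldsymbol{v}}_{s,0}\circ\phi^{\boldsymbol{v}}_{t,s}$ from \cref{pro2.13}, the $x$-uniform displacement estimate $|\phi^{\boldsymbol{v}}_{t,s}(x)-x|\leq\int_{s}^{t}\|\boldsymbol{v}(\tau,\cdot)\|_{\mathscr{L}^{\infty}(\Omega)}\,d\tau$, and the Gr\"onwall/admissibility bound $\sup_{s}Lip(\phi^{\boldsymbol{v}}_{s,0})\leq\exp\bigl(C\|\boldsymbol{v}\|_{\mathscr{L}^{2}_{\mathscr{V}}(\Omega)}\bigr)$ (the same estimate the paper itself uses later in \cref{lem3.9} for the forward flow). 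This yields an explicit modulus of continuity, $\|f_{\varepsilon}(t,\cdot)-f_{\varepsilon}(s,\cdot)\|_{\mathscr{L}^{\infty}(\Omega)}\leq L_{\varepsilon}C_{\boldsymbol{v}}\int_{s}^{t}\|\boldsymbol{v}(\tau,\cdot)\|_{\mathscr{L}^{\infty}(\Omega)}\,d\tau$, so in fact $t\mapsto f_{\varepsilon}(t,\cdot)$ is absolutely continuous into $\mathscr{L}^{\infty}(\Omega)$, which is strictly more than the statement asks. The only points to keep tidy are the ones you already flagged: define $f_{0}\ast\rho_{\varepsilon}$ via zero-extension of $f_{0}$ (or restrict as in \cref{pro2.4}) so that the Lipschitz bound is global, and note that the admissibility embedding of \cref{def2.2} together with Cauchy--Schwarz gives the $\mathscr{L}^{1}$-in-time control of both $\|\boldsymbol{v}(\tau,\cdot)\|_{\mathscr{L}^{\infty}}$ and $Lip(\boldsymbol{v}(\tau,\cdot))$ needed for the displacement and Gr\"onwall estimates. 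In short, your argument is longer but closes the uniformity-in-$x$ step that the paper's two-line conclusion glosses over, while the paper's argument is shorter and purely qualitative.
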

\begin{proof} 
  we deduce that $\phi^{\boldsymbol{v}}(\cdot,x)^{-1}$ is absolutely continuous in $[0, 1]$ for $x\in \Omega$ according to Carath\'eodory theorem in \cite{ambrosiointroduzione}. Hence, it holds 
  \[
      \|\phi^{\boldsymbol{v}}(t, x)^{-1} - \phi^{\boldsymbol{v}}(s, x)^{-1} \|_{\mathbb{R}^{N}} \longrightarrow 0
  \]
  for $x\in \Omega$ as $t \longrightarrow s$. Furthermore, the property of mollifier yields
  \[
      f_{0} \ast \rho_{\varepsilon} \in \mathscr{C}(\Omega, \mathbb{R}),
  \]
  that is, 
  \[
      \|f_{0} \ast \rho_{\varepsilon}(y_{1}) - f_{0} \ast \rho_{\varepsilon}(y_{2})\|_{\mathbb{R}} \longrightarrow 0
  \]
  for $y_{1}, y_{2}\in \Omega$ as $y_{1} \longrightarrow y_{2}$.  Therefore, we obtain
  \[
      \begin{aligned}
          \| f_{\varepsilon}(t,x) - f_{\varepsilon}(s,x) \|_{\mathscr{L}^{\infty}(\Omega)}
          =\,&\mathrm{ess} \underset{x\in \Omega}{\sup} |(f_{0} \ast \rho_{\varepsilon} )\circ (\phi^{\boldsymbol{v}}_{t})^{-1}(x) - (f_{0} \ast \rho_{\varepsilon} )\circ (\phi^{\boldsymbol{v}}_{s})^{-1}(x)|\\[2mm]
          &\longrightarrow 0\quad \text{as}~ t \longrightarrow s.
      \end{aligned}
  \]
  This completes the proof. 
\end{proof}

In the following theorem, the similar result to \cref{transport solution} with initial value $f_{0} \in \mathscr{L}^{\infty}(\Omega) \cap BV(\Omega)$ is concluded.
\begin{theorem}[Existence of weak solution with modified initial condition]\label{modify tranport solution}
 Let $\boldsymbol{v} \in \mathscr{L}_{\mathscr{V}}^{2}(\Omega)$ with $\mathrm{div} \boldsymbol{v} \in \mathscr{L}^{N}([0,1] \times \Omega)$ and  $f_{0} \in \mathscr{L}^{\infty}(\Omega) \cap BV(\Omega)$. Furthermore, $\phi^{\boldsymbol{v}}_{t}$ is the associated flow of velocity field $\boldsymbol{v} \in \mathscr{L}_{\mathscr{V}}^{2}(\Omega)$ in $\Omega$ for $t\in [0,1]$. Define
  \[
      f(t,x) \triangleq f_{0} \circ (\phi^{\boldsymbol{v}}_{t})^{-1}(x).
  \]
  Then, $f\in \mathscr{L}^{\infty}([0,1] \times \Omega) \cap \mathscr{L}^{\infty}([0,1];BV(\Omega))$ and is a weak solution to  transport equation \eqref{transport}.
\end{theorem}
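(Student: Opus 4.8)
The plan is to reduce everything to two facts already available: \cref{transport solution} for the $BV$-regularity and the weak-solution property, and \cref{th3.5} together with Young's convolution inequality for the additional $\mathscr{L}^{\infty}$ bound. Since $f_{0} \in \mathscr{L}^{\infty}(\Omega) \cap BV(\Omega) \subset BV(\Omega)$ and $\mathrm{div}\,\boldsymbol{v} \in \mathscr{L}^{N}([0,1]\times\Omega)$, the hypotheses of \cref{transport solution} are met, and the function $f(t,x) = f_{0}\circ(\phi^{\boldsymbol{v}}_{t})^{-1}(x)$ considered here is literally the function produced there; hence $f \in \mathscr{L}^{\infty}([0,1];BV(\Omega))$ and $f$ is a weak solution of \eqref{transport} in the sense of \cref{def3.1}. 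Thus the only genuinely new assertion to establish is $f \in \mathscr{L}^{\infty}([0,1]\times\Omega)$.

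For this $\mathscr{L}^{\infty}$ bound I would first work with the mollified approximants. With $\rho_{\varepsilon}$ as in \cref{define2.7}, set $f_{\varepsilon}(t,x) \triangleq (f_{0}\ast\rho_{\varepsilon})\circ(\phi^{\boldsymbol{v}}_{t})^{-1}(x)$ as in \cref{th3.5}. By Young's inequality, $\|f_{0}\ast\rho_{\varepsilon}\|_{\mathscr{L}^{\infty}} \le \|f_{0}\|_{\mathscr{L}^{\infty}(\Omega)}\,\|\rho_{\varepsilon}\|_{\mathscr{L}^{1}} = \|f_{0}\|_{\mathscr{L}^{\infty}(\Omega)}$, and since $(\phi^{\boldsymbol{v}}_{t})^{-1}\colon\Omega\to\Omega$ is a $\mathscr{C}^{1}$-diffeomorphism of $\Omega$ onto itself, composition with it does not increase the essential supremum; therefore $\|f_{\varepsilon}(t,\cdot)\|_{\mathscr{L}^{\infty}(\Omega)} \le \|f_{0}\|_{\mathscr{L}^{\infty}(\Omega)}$ uniformly in $t\in[0,1]$ and $\varepsilon>0$.

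Next I would pass to the limit $\varepsilon\to 0$. From the argument in the proof of \cref{transport solution} (via \cref{th3.2} and \cref{th2.6}), for each fixed $t$ the sequence $\{f_{\varepsilon}(t,\cdot)\}_{\varepsilon}$ converges to $f(t,\cdot)$ in $\mathscr{L}^{1}(\Omega)$, hence along a subsequence a.e. in $\Omega$; lower semicontinuity of the $\mathscr{L}^{\infty}$-norm under a.e. convergence then gives $\|f(t,\cdot)\|_{\mathscr{L}^{\infty}(\Omega)} \le \|f_{0}\|_{\mathscr{L}^{\infty}(\Omega)}$ for every $t\in[0,1]$. (Equivalently, one may note directly that a $\mathscr{C}^{1}$-diffeomorphism maps Lebesgue-null sets to Lebesgue-null sets — which is compatible with the Jacobian bound coming from \cref{Hadamard} — so that $|f(t,x)| = |f_{0}((\phi^{\boldsymbol{v}}_{t})^{-1}(x))| \le \|f_{0}\|_{\mathscr{L}^{\infty}(\Omega)}$ for a.e.\ $x$.) Since $f \in \mathscr{L}^{\infty}([0,1];BV(\Omega)) \subset \mathscr{L}^{1}([0,1]\times\Omega)$, the function $f$ is jointly measurable on $[0,1]\times\Omega$, so the uniform-in-$t$ bound just obtained yields $f\in\mathscr{L}^{\infty}([0,1]\times\Omega)$ with $\|f\|_{\mathscr{L}^{\infty}([0,1]\times\Omega)}\le\|f_{0}\|_{\mathscr{L}^{\infty}(\Omega)}$. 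Combined with \cref{transport solution}, this gives $f\in\mathscr{L}^{\infty}([0,1]\times\Omega)\cap\mathscr{L}^{\infty}([0,1];BV(\Omega))$ and the weak-solution property, as claimed.

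As for the main obstacle: there is little real difficulty, since the heavy analysis is carried out in \cref{transport solution} and \cref{th3.5}. The only points needing slight care are the invariance of the essential supremum under composition with $(\phi^{\boldsymbol{v}}_{t})^{-1}$ (equivalently, preservation of null sets, using the $\mathscr{C}^{1}$-regularity of the flow) and the bookkeeping that the $f$ defined in this theorem coincides with the $f$ constructed in \cref{transport solution}, so that the weak-solution identity transfers without re-proof.
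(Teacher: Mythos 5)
Your proposal is correct, but it takes a different and in fact more economical route than the paper. The paper does not simply cite \cref{transport solution} and add an $\mathscr{L}^{\infty}$ estimate; it reruns the mollification scheme: it notes that the approximants $f_{\varepsilon}=(f_{0}\ast\rho_{\varepsilon})\circ(\phi^{\boldsymbol{v}}_{t})^{-1}$ are uniformly bounded in $\mathscr{L}^{\infty}([0,1]\times\Omega)$ (using \cref{th3.5}), extracts a weak$^{\star}$ limit $g\in\mathscr{L}^{\infty}([0,1]\times\Omega)$, separately extracts (as in the proof of \cref{transport solution}) a further subsequence converging weakly in $\mathscr{L}^{2}([0,1];\mathscr{L}^{\frac{N}{N-1}}(\Omega))$ to $\tilde f=f_{0}\circ(\phi^{\boldsymbol{v}}_{t})^{-1}$, and then identifies $g=\tilde f$ by testing both limits against the same test functions, which yields simultaneously the weak-solution property and the membership $f\in\mathscr{L}^{\infty}([0,1]\times\Omega)$. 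You instead take the weak-solution property and the $\mathscr{L}^{\infty}([0,1];BV(\Omega))$ regularity verbatim from \cref{transport solution} (legitimate, since $\mathscr{L}^{\infty}\cap BV\subset BV$) and prove only the missing $\mathscr{L}^{\infty}$ bound, either directly from the fact that the $\mathscr{C}^{1}$ (hence null-set-preserving) diffeomorphism $\phi^{\boldsymbol{v}}_{t}$ cannot increase the essential supremum, or via the uniform bound $\|f_{\varepsilon}(t,\cdot)\|_{\mathscr{L}^{\infty}(\Omega)}\le\|f_{0}\|_{\mathscr{L}^{\infty}(\Omega)}$ (Young) combined with $\mathscr{L}^{1}$, hence a.e., convergence of $f_{\varepsilon}(t,\cdot)$ to $f(t,\cdot)$. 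Your argument buys an explicit maximum-principle-type estimate $\|f\|_{\mathscr{L}^{\infty}([0,1]\times\Omega)}\le\|f_{0}\|_{\mathscr{L}^{\infty}(\Omega)}$ and avoids the double subsequence extraction and the limit identification $g=\tilde f$; the paper's compactness argument stays stylistically uniform with the preceding proof and never has to discuss representatives or null-set preservation explicitly. The only points in your write-up to keep an eye on are minor: the justification of joint measurability should rest on the fact that $f=f_{0}\circ(\phi^{\boldsymbol{v}}_{t})^{-1}$ is already an admissible (jointly measurable, locally summable) weak solution by \cref{transport solution}, rather than on circularly citing its membership in a Lebesgue space, and the Fubini step turning the per-$t$ bound into a bound on $[0,1]\times\Omega$ deserves one explicit sentence; neither affects correctness.
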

\begin{proof}
  Similarly to the proof of \cref{transport solution}. 
  First of all, let $f_{0} \in \mathscr{L}^{\infty}(\Omega)$. Then, transport equation with initial value $f_{0} \ast \rho_{\varepsilon}$, i.e.,
  \[
         \left\{\begin{array}{ll} \partial_t f(t, \cdot)+\langle\nabla f(t, \cdot), \boldsymbol{v}(t, \cdot)\rangle_{\mathbb{R}^N}=0 \quad \text{in}~ ]0,1] \times \Omega,\\[2mm]
         f(0, \cdot)=f_{0} \ast \rho_{\varepsilon} \qquad \text { in }~ \Omega, \end{array}\right.
 \]
 has the unique solution for $\varepsilon > 0$ as below.
 \begin{equation}\label{explicit formula}
     f_{\varepsilon}(t,x) = (f_{0} \ast \rho_{\varepsilon}) \circ (\phi^{\boldsymbol{v}}_{t})^{-1}(x).
 \end{equation}
 Given $t\in [0, 1]$, we immediately obtain from \eqref{explicit formula} that $\{f_{\varepsilon}(t,\cdot)\}_{\varepsilon}$ is uniformly bounded in $\mathscr{L}^{\infty}(\Omega)$. Furthermore, \cref{th3.5} yields that $\{f_{\varepsilon}\}_{\varepsilon}$ is uniformly bounded 
 in $\mathscr{L}^{\infty}([0,1] \times \Omega)$. Hence, there exists a subsequence $\{f_{\varepsilon_{k}}\}_{\varepsilon_{k}}\subset \{f_{\varepsilon}\}_{\varepsilon}$  converging weakly$^{\star}$ to $g \in \mathscr{L}^{\infty}([0,1] \times \Omega)$. This implies that for $\varphi \in \mathscr{C}_{c}^{\infty}([0,1[\times\Omega)$ it holds 
 \[
 \begin{aligned}
     \int_{0}^{1} &\int_{\Omega} f_{\varepsilon_{k}}\left[\partial_{t} \varphi+\varphi \operatorname{div} \boldsymbol{v}+\boldsymbol{v} \cdot \nabla \varphi\right] d x d t\\[2mm]
     &\longrightarrow \int_{0}^{1} \int_{\Omega} g \left[\partial_{t} \varphi+\varphi \operatorname{div} \boldsymbol{v}+\boldsymbol{v} \cdot \nabla \varphi\right] d x d t 
 \end{aligned}
 \]
 as $\varepsilon_{k} \rightarrow 0$.
 
 By $f_{0} \in BV(\Omega)$ and the explicit formula \eqref{explicit formula}, $\{f_{\varepsilon_{k}}\}_{\varepsilon_{k}}$ is also uniformly bounded in $\mathscr{L}^{\infty}([0,1];BV(\Omega))$. 
 Thus, according to the proof of \cref{transport solution}, we can find a subsequence $\{f_{\varepsilon_{k_{j}}}\}_{\varepsilon_{k_{j}}}\subset \{f_{\varepsilon_{k}}\}_{\varepsilon_{k}}$  converging weakly to $\Tilde{f}$ in $\mathscr{L}^{2}([0,1];\mathscr{L}^{\frac{N}{N-1}}(\Omega))$. Hence, we get 
 \[
 \begin{aligned}
     \int_{0}^{1} &\int_{\Omega} f_{\varepsilon_{k_{j}}}\left[\partial_{t} \varphi+\varphi \operatorname{div} \boldsymbol{v}+\boldsymbol{v} \cdot \nabla \varphi\right] d x d t\\[2mm]
     &\longrightarrow \int_{0}^{1} \int_{\Omega} \Tilde{f} \left[\partial_{t} \varphi+\varphi \operatorname{div} \boldsymbol{v}+\boldsymbol{v} \cdot \nabla \varphi\right] d x d t 
 \end{aligned}
 \]
 for $\varphi \in \mathscr{C}_{c}^{\infty}([0,1[\times\Omega)$
 as $\varepsilon_{k_{j}} \rightarrow 0$. Here, $\Tilde{f}(t,x) = f_{0} \circ (\phi^{\boldsymbol{v}}_{t})^{-1}(x)$ and $\Tilde{f}$ is a weak solution of transport equation \eqref{transport} with initial value $f_{0} \in \mathscr{L}^{\infty}(\Omega) \cap BV(\Omega)$. Hence, $g = \Tilde{f}$. 
 This completes the proof. 
\end{proof}

The uniqueness of weak solution to transport equation \eqref{transport} is quite a tricky problem. We now introduce the notion of renormalized solution. For more details, see \cite{diperna1989ordinary, ambrosio2004transport, crippa2008flow, jarde2018analysis}. 
\begin{definition}[Renormalized solution \cite{jarde2018analysis}]
  Let $\boldsymbol{v} \in \mathscr{L}_{\mathscr{V}}^{2}(\Omega)$ with $\mathrm{div} \boldsymbol{v} \in \mathscr{L}^{N}([0,1] \times \Omega)$ and $f_{0} \in \mathscr{L}^{\infty}(\Omega) \cap BV(\Omega)$. Furthermore, $f\in \mathscr{L}^{\infty}([0,1] \times \Omega) \cap \mathscr{L}^{\infty}([0,1];BV(\Omega))$ is a weak solution of transport equation \eqref{transport}. We call f a renormalized solution if for any $\mathscr{C}^{1}$-function $\beta :\mathbb{R} \rightarrow \mathbb{R}$, the function $\beta(f)$ is a weak solution of 
  \[%\label{renormal transport}
  \left\{\begin{array}{ll} \partial_t \beta(f(t, \cdot))+\langle\nabla \beta(f(t, \cdot)), \boldsymbol{v}(t, \cdot)\rangle_{\mathbb{R}^N}=0 \quad \text{in}~ ]0,1] \times \Omega,\\[2mm]
  \beta(f(0, \cdot))=\beta(f_{0}) \qquad \text { in }~ \Omega.\end{array}\right.
  \]
\end{definition}

In particular, we are mainly concerned about when the velocity field $\boldsymbol{v} \in \mathscr{L}_{\mathscr{V}}^{2}(\Omega)$ can make transport equation \eqref{transport} have only renormalized solutions, which is very important for the well-posedness of the solution of transport equation \eqref{transport}. This property is stated below.
\begin{definition}[Renormalization property of velocity field \cite{jarde2018analysis}]
    Let $\boldsymbol{v}$ be a velocity field in $\mathscr{L}_{\mathscr{V}}^{2}(\Omega)$ with $\mathrm{div} \boldsymbol{v} \in \mathscr{L}^{N}([0,1] \times \Omega)$. We say that $\boldsymbol{v}$ has the property of renormalization if, for every $f_{0} \in \mathscr{L}^{\infty}(\Omega) \cap BV(\Omega)$, every bounded solution of transport equation \cref{transport} with velocity field $\boldsymbol{v}$ and initial value $f_{0}$ is a renormalized solution.
\end{definition}

The following theorem demonstrates that some velocity fields have the property of renormalization in bounded spatial domain.
\begin{theorem}[Renormalization property \cite{jarde2018analysis}]
    Let $\boldsymbol{v}$ be a bounded velocity field belonging to $\mathscr{L}^{1}((0,1),BV_{0}(\Omega;\mathbb{R}^{N}))$, such that $\operatorname{div}\boldsymbol{v} \in \mathscr{L}^{1}((0,1)\times\Omega)$. Then $\boldsymbol{v}$ has the renormalization property.
\end{theorem}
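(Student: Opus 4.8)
The plan is to adapt the regularization-by-mollification technique of DiPerna and Lions, in the refinement due to Ambrosio that covers $BV$ vector fields, to the present bounded-domain setting; the hypothesis $\boldsymbol{v}(t,\cdot)\in BV_{0}(\Omega;\mathbb{R}^{N})$ is exactly what lets us discard any boundary flux. First I would reduce to the whole space. Since $\boldsymbol{v}(t,\cdot)$ has vanishing trace on $\partial\Omega$ for a.e.\ $t$, its extension $\bar{\boldsymbol{v}}(t,\cdot)$ by $0$ lies in $BV(\mathbb{R}^{N};\mathbb{R}^{N})$ with $|D\bar{\boldsymbol{v}}(t,\cdot)|(\mathbb{R}^{N})=|D\boldsymbol{v}(t,\cdot)|(\Omega)$ (no surface measure is created on $\partial\Omega$ precisely because the trace is zero), and $\operatorname{div}\bar{\boldsymbol{v}}\in\mathscr{L}^{1}((0,1)\times\mathbb{R}^{N})$ agrees with $\operatorname{div}\boldsymbol{v}$ on $(0,1)\times\Omega$. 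Given a bounded weak solution $f$ on $(0,1)\times\Omega$, its extension $\bar f\triangleq f\,\mathbf{1}_{(0,1)\times\Omega}$ is then a bounded weak solution on $(0,1)\times\mathbb{R}^{N}$ of the transport equation with field $\bar{\boldsymbol{v}}$ — the boundary term produced by a test function not vanishing near $\partial\Omega$ is killed by the zero trace of $\boldsymbol{v}$ via a standard cutoff/trace estimate — so it suffices to establish the renormalization property on $\mathbb{R}^{N}$ and restrict at the end.

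Next I would mollify in the spatial variable, $f_{\varepsilon}(t,\cdot)\triangleq f(t,\cdot)\ast\rho_{\varepsilon}$, and use that the weak form is the distributional formulation of $\partial_t f+\boldsymbol{v}\cdot\nabla f=0$ with $\boldsymbol{v}\cdot\nabla f\triangleq\operatorname{div}(f\boldsymbol{v})-f\operatorname{div}\boldsymbol{v}$. Convolving, $f_{\varepsilon}$ satisfies, classically in $x$,
\[
  \partial_t f_{\varepsilon}+\boldsymbol{v}\cdot\nabla f_{\varepsilon}=r_{\varepsilon},\qquad r_{\varepsilon}\triangleq\boldsymbol{v}\cdot\nabla f_{\varepsilon}-(\boldsymbol{v}\cdot\nabla f)\ast\rho_{\varepsilon},
\]
and hence, for any $\beta\in\mathscr{C}^{1}(\mathbb{R})$, $\partial_t\beta(f_{\varepsilon})+\boldsymbol{v}\cdot\nabla\beta(f_{\varepsilon})=\beta'(f_{\varepsilon})\,r_{\varepsilon}$, equivalently $\partial_t\beta(f_{\varepsilon})+\operatorname{div}(\boldsymbol{v}\,\beta(f_{\varepsilon}))-\beta(f_{\varepsilon})\operatorname{div}\boldsymbol{v}=\beta'(f_{\varepsilon})\,r_{\varepsilon}$ in the distributional sense, with initial value $\beta(f_{0}\ast\rho_{\varepsilon})$. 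The whole argument then hinges on the commutator estimate $r_{\varepsilon}\to 0$ in $\mathscr{L}^{1}_{\mathrm{loc}}((0,1)\times\mathbb{R}^{N})$.

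Proving that commutator estimate for a merely $BV$ field is the main obstacle. Writing $r_{\varepsilon}$ as an integral of difference quotients of $\boldsymbol{v}$ tested against $\nabla\rho$, one splits $D\boldsymbol{v}=\nabla\boldsymbol{v}\,\mathcal{L}^{N}+D^{s}\boldsymbol{v}$. For the absolutely continuous part the analysis is the classical Sobolev one: the difference quotients converge in $\mathscr{L}^{1}_{\mathrm{loc}}$ and the $y$-average against $\nabla\rho(y)$ produces exactly the term that cancels the $f\operatorname{div}\boldsymbol{v}$ contribution. The singular part $D^{s}\boldsymbol{v}$ is the delicate point, since difference quotients there do not converge; here I would invoke Alberti's rank-one theorem, which says that $|D^{s}\boldsymbol{v}|$-a.e.\ the polar $\frac{dD\boldsymbol{v}}{d|D\boldsymbol{v}|}$ is a rank-one matrix $\xi\otimes\eta$. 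Performing the blow-up of $r_{\varepsilon}$ at $|D^{s}\boldsymbol{v}|$-a.e.\ point and inserting this rank-one structure, the offending bilinear term reduces to a one-dimensional average of the type $\int(\eta\cdot y)\,\partial_{\xi}\rho(y)\,dy$ weighted by a bounded factor; averaging over the direction of $\eta$, equivalently using Ambrosio's anisotropic choice of mollifier adapted to $\eta$, makes this average vanish in the limit, so $r_{\varepsilon}\to 0$ strongly. I expect essentially all the work to be concentrated here; transporting the statement from $BV(\mathbb{R}^{N})$ to the time-dependent space $\mathscr{L}^{1}((0,1);BV_{0}(\Omega))$ is routine once the spatial version is in hand.

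Finally I would pass to the limit. Since $f$ is bounded, $f_{\varepsilon}\to f$ a.e.\ and in $\mathscr{L}^{1}_{\mathrm{loc}}$, so $\beta(f_{\varepsilon})\to\beta(f)$ boundedly and $\beta'(f_{\varepsilon})$ stays bounded; combined with $r_{\varepsilon}\to 0$ in $\mathscr{L}^{1}_{\mathrm{loc}}$ this gives $\beta'(f_{\varepsilon})r_{\varepsilon}\to 0$ in $\mathscr{L}^{1}_{\mathrm{loc}}$. Letting $\varepsilon\to0$ in the distributional identity above then shows $\beta(f)$ is a weak solution of the transport equation with field $\boldsymbol{v}$. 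For the initial condition $\beta(f(0,\cdot))=\beta(f_{0})$, one checks from the equation that $t\mapsto\int_{\Omega}\beta(f(t,\cdot))\varphi\,dx$ is continuous for each $\varphi\in\mathscr{C}_{c}^{\infty}(\Omega)$ and uses the uniform $\mathscr{L}^{\infty}$ bound to upgrade this to a representative in $\mathscr{C}([0,1];\mathscr{L}^{1}(\Omega))$; since $\beta(f_{\varepsilon})$ starts at $\beta(f_{0}\ast\rho_{\varepsilon})\to\beta(f_{0})$ in $\mathscr{L}^{1}$, the limit starts at $\beta(f_{0})$. Restricting back to $\Omega$ is immediate because $\bar{\boldsymbol{v}}$ and $\bar f$ vanish outside $\Omega$ and the $BV_{0}$ hypothesis guarantees nothing was lost at $\partial\Omega$. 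Hence every bounded weak solution of \eqref{transport} with field $\boldsymbol{v}$ is renormalized, i.e.\ $\boldsymbol{v}$ has the renormalization property.
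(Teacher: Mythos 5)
The paper does not actually prove this statement: it is quoted, with citation, from \cite{jarde2018analysis}, so there is no in-paper argument to compare yours against. Your sketch is essentially the proof used in that reference and its sources: extend $\boldsymbol{v}$ and the bounded solution by zero to $\mathbb{R}^{N}$ (the $BV_{0}$, i.e.\ zero-trace, hypothesis guaranteeing that no jump part of $D\bar{\boldsymbol{v}}$ and no boundary flux of $f\boldsymbol{v}$ is created, and that $\operatorname{div}\bar{\boldsymbol{v}}=(\operatorname{div}\boldsymbol{v})\mathbf{1}_{\Omega}\in\mathscr{L}^{1}$), and then run the DiPerna--Lions mollification scheme with Ambrosio's $BV$ commutator analysis, Alberti's rank-one theorem, and anisotropic kernels. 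One technical overstatement should be flagged: for a field that is merely $BV$ in $x$, the mechanism you describe does not yield $r_{\varepsilon}\to 0$ strongly in $\mathscr{L}^{1}_{\mathrm{loc}}$. Ambrosio's argument shows only that the commutators are bounded in $\mathscr{L}^{1}_{\mathrm{loc}}$, that any distributional limit of $\beta'(f_{\varepsilon})\,r_{\varepsilon}$ is a measure (the renormalization defect) whose total variation is dominated by an anisotropic constant, depending on the convolution kernel, times $|D^{s}\boldsymbol{v}|$, and then annihilates this defect by combining the rank-one structure of $D^{s}\boldsymbol{v}$ with an infimum over kernels; strong convergence of $r_{\varepsilon}$ itself is neither obtained nor needed. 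Since your passage to the limit only uses that the defect vanishes, this amends the wording of your commutator step but not the conclusion. The remaining ingredients — the cutoff/trace argument justifying that the zero-extension of $f$ solves the equation across $\partial\Omega$, and the weak time-continuity argument recovering the initial datum $\beta(f_{0})$ — are sound, so your plan is a faithful reconstruction of the cited proof rather than a new route.
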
   

Hence, the following result can be immediately concluded.
\begin{corollary}\label{cor3.7.1}
     Let $\boldsymbol{v} \in \mathscr{L}_{\mathscr{V}}^{2}(\Omega)$ be a bounded velocity field, then $\boldsymbol{v}$ has the renormalization property.
\end{corollary}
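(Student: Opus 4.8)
The plan is simply to verify that a bounded $\boldsymbol{v} \in \mathscr{L}_{\mathscr{V}}^{2}(\Omega)$ meets every hypothesis of the renormalization theorem of \cite{jarde2018analysis} quoted just above, so that the conclusion follows verbatim from it. That theorem asks of $\boldsymbol{v}$ three things: that it be a bounded velocity field; that $\boldsymbol{v} \in \mathscr{L}^{1}((0,1), BV_{0}(\Omega;\mathbb{R}^{N}))$; and that $\operatorname{div}\boldsymbol{v} \in \mathscr{L}^{1}((0,1)\times\Omega)$. The first is part of the hypothesis of \cref{cor3.7.1}, so only the remaining two need to be established, and both rest on the admissibility of $\mathscr{V}$ combined with the boundedness of $\Omega$.

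First I would invoke \cref{def2.2}: since $\mathscr{V}$ is admissible, it embeds continuously into $\mathscr{C}_{0}^{1}(\Omega,\mathbb{R}^{N})$, i.e., there is $C>0$ with $\|\boldsymbol{v}(t,\cdot)\|_{1,\infty} \leq C^{-1}\|\boldsymbol{v}(t,\cdot)\|_{\mathscr{V}}$ for a.e.\ $t\in[0,1]$; in particular $\boldsymbol{v}(t,\cdot)$ is a $\mathscr{C}^{1}$ vector field on $\Omega$ with vanishing trace. Because $\Omega$ is bounded with Lipschitz boundary, such a field lies in $W^{1,\infty}(\Omega;\mathbb{R}^{N}) \subset BV(\Omega;\mathbb{R}^{N})$; explicitly $\|\boldsymbol{v}(t,\cdot)\|_{\mathscr{L}^{1}(\Omega)} \leq |\Omega|\,\|\boldsymbol{v}(t,\cdot)\|_{1,\infty}$ and $|D\boldsymbol{v}(t,\cdot)|(\Omega) = \int_{\Omega}|\nabla\boldsymbol{v}(t,x)|\,dx \leq |\Omega|\,\|\boldsymbol{v}(t,\cdot)\|_{1,\infty}$, while the vanishing trace upgrades this to $\boldsymbol{v}(t,\cdot) \in BV_{0}(\Omega;\mathbb{R}^{N})$. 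Hence $\|\boldsymbol{v}(t,\cdot)\|_{BV(\Omega)} \leq 2|\Omega|\,C^{-1}\|\boldsymbol{v}(t,\cdot)\|_{\mathscr{V}}$ for a.e.\ $t$, and since the embedding $\mathscr{V}\hookrightarrow BV_{0}(\Omega;\mathbb{R}^{N})$ is continuous, $t\mapsto\boldsymbol{v}(t,\cdot)$ is strongly measurable as a $BV_{0}$-valued map.

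Then I would integrate in time. As $[0,1]$ has unit measure, the Cauchy--Schwarz inequality gives $\int_{0}^{1}\|\boldsymbol{v}(t,\cdot)\|_{\mathscr{V}}\,dt \leq \big(\int_{0}^{1}\|\boldsymbol{v}(t,\cdot)\|_{\mathscr{V}}^{2}\,dt\big)^{1/2} = \|\boldsymbol{v}\|_{\mathscr{L}_{\mathscr{V}}^{2}(\Omega)} < \infty$, and combining with the bound above yields $\boldsymbol{v} \in \mathscr{L}^{1}((0,1), BV_{0}(\Omega;\mathbb{R}^{N}))$. For the divergence, pointwise $|\operatorname{div}\boldsymbol{v}(t,x)| \leq \sqrt{N}\,|\nabla\boldsymbol{v}(t,x)| \leq \sqrt{N}\,\|\boldsymbol{v}(t,\cdot)\|_{1,\infty} \leq \sqrt{N}\,C^{-1}\|\boldsymbol{v}(t,\cdot)\|_{\mathscr{V}}$, so integrating over $\Omega$ and then over $[0,1]$ (again by Cauchy--Schwarz) gives $\operatorname{div}\boldsymbol{v} \in \mathscr{L}^{1}((0,1)\times\Omega)$; if one reads ``bounded'' as $\boldsymbol{v}\in\mathscr{L}_{\mathscr{V}}^{\infty}(\Omega)$ this is even immediate, since then $\operatorname{div}\boldsymbol{v}\in\mathscr{L}^{\infty}((0,1)\times\Omega)\subset\mathscr{L}^{1}((0,1)\times\Omega)$. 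All three hypotheses being in force, the renormalization theorem applies and $\boldsymbol{v}$ has the renormalization property.

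There is no real obstacle: the corollary is a routine specialization. The only points worth a second glance are (i) that the embedding $\mathscr{V}\hookrightarrow\mathscr{C}_{0}^{1}(\Omega,\mathbb{R}^{N})$ is precisely what certifies membership in the zero-trace subspace $BV_{0}$ rather than merely in $BV$, and (ii) that the step from the $\mathscr{L}^{2}$-in-time control supplied by $\mathscr{L}_{\mathscr{V}}^{2}(\Omega)$ to the $\mathscr{L}^{1}$-in-time control demanded by the theorem is legitimate only because the interval $[0,1]$ has finite measure.
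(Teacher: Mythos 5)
Your proposal is correct and follows exactly the route the paper intends: the paper states the corollary as an immediate consequence of the quoted renormalization theorem of \cite{jarde2018analysis}, and your argument simply writes out the routine verification (admissibility giving $\boldsymbol{v}(t,\cdot)\in\mathscr{C}_{0}^{1}\subset BV_{0}$ with controlled norm on the bounded domain, plus Cauchy--Schwarz in time to pass from $\mathscr{L}^{2}$ to $\mathscr{L}^{1}$ integrability of the field and its divergence) that the paper leaves implicit.
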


\begin{theorem}[Uniqueness of weak solution]\label{th3.8.1}
 Let $\boldsymbol{v} \in \mathscr{L}_{\mathscr{V}}^{2}(\Omega)$ with $\mathrm{div} \boldsymbol{v} \in \mathscr{L}^{N}([0,1] \times \Omega)$ be a bounded velocity field and  $f_{0} \in \mathscr{L}^{\infty}(\Omega) \cap BV(\Omega)$. Furthermore, $\phi^{\boldsymbol{v}}_{t}$ is the associated flow of velocity field $\boldsymbol{v} \in \mathscr{L}_{\mathscr{V}}^{2}(\Omega)$ in $\Omega$ for $t\in [0,1]$. Define
  \[
      f(t,x) \triangleq f_{0} \circ (\phi^{\boldsymbol{v}}_{t})^{-1}(x).
  \]
  Then, $f\in \mathscr{L}^{\infty}([0,1] \times \Omega) \cap \mathscr{L}^{\infty}([0,1];BV(\Omega))$ and is the unique renormalized solution to transport equation \eqref{transport}.
\end{theorem}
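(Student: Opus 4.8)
The plan is to read off $f$ as a renormalized solution from the existence result already established, and then to prove uniqueness by a Gr\"onwall argument applied to a bounded nonlinearity of the difference of two solutions. The one genuinely delicate step is the integration of the weak formulation over $\Omega$, and it is there that the admissible structure of $\mathscr{V}$ enters.

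First, by \cref{modify tranport solution} the function $f(t,\cdot)=f_{0}\circ(\phi^{\boldsymbol{v}}_{t})^{-1}$ lies in $\mathscr{L}^{\infty}([0,1]\times\Omega)\cap\mathscr{L}^{\infty}([0,1];BV(\Omega))$ and is a weak solution of \eqref{transport}; since $\boldsymbol{v}$ is bounded, \cref{cor3.7.1} shows that $\boldsymbol{v}$ has the renormalization property, so this bounded weak solution is in fact a renormalized solution, and it remains only to prove uniqueness. I would let $\tilde{f}$ be an arbitrary renormalized solution of \eqref{transport} with datum $f_{0}$, so that $\tilde{f}\in\mathscr{L}^{\infty}([0,1]\times\Omega)\cap\mathscr{L}^{\infty}([0,1];BV(\Omega))$ by definition, and set $w\triangleq f-\tilde{f}$. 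Since \eqref{weak solution of transport} is linear in the solution, $w$ is a bounded weak solution of \eqref{transport} with datum $0\in\mathscr{L}^{\infty}(\Omega)\cap BV(\Omega)$ and $w\in\mathscr{L}^{\infty}([0,1];BV(\Omega))$; by \cref{cor3.7.1} again, $w$ is renormalized. Fixing $\beta\in\mathscr{C}^{1}(\mathbb{R})$ with $\beta\ge0$, $\beta(0)=0$ and $\beta(s)>0$ for $s\ne0$ (e.g.\ $\beta(s)=s^{2}/(1+s^{2})$), the renormalization property makes $\beta(w)$ a weak solution of \eqref{transport} with zero datum, with $\beta(w)\in\mathscr{L}^{\infty}([0,1];BV(\Omega))$ because the essential range of $w$ is bounded; it then suffices to show $\beta(w)\equiv0$.

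The core of the proof is to integrate the weak identity for $\beta(w)$ in space. I would test \eqref{weak solution of transport} with $\varphi(t,x)=\theta(t)\psi_{k}(x)$, where $\theta\in\mathscr{C}_{c}^{\infty}([0,1[)$ and $\psi_{k}\in\mathscr{C}_{c}^{\infty}(\Omega)$ are cut-offs adapted to the Lipschitz boundary with $0\le\psi_{k}\le1$, $\psi_{k}\uparrow1$, $\operatorname{supp}\nabla\psi_{k}\subset\{x:\operatorname{dist}(x,\partial\Omega)\le 1/k\}$ and $\|\nabla\psi_{k}\|_{\mathscr{L}^{\infty}(\Omega)}\lesssim k$. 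Since $\mathscr{V}$ is admissible (\cref{def2.2}), $\boldsymbol{v}(t,\cdot)\in\mathscr{C}_{0}^{1}(\Omega,\mathbb{R}^{N})$ vanishes on $\partial\Omega$, so $|\boldsymbol{v}(t,x)|\le\|\boldsymbol{v}(t,\cdot)\|_{1,\infty}\operatorname{dist}(x,\partial\Omega)$; since moreover the boundary shell $\{x:\operatorname{dist}(x,\partial\Omega)\le 1/k\}$ has measure $O(1/k)$,
\[
\Big|\int_{0}^{1}\!\int_{\Omega}\beta(w)\,\theta\,\boldsymbol{v}\cdot\nabla\psi_{k}\,dx\,dt\Big|\lesssim\frac{1}{k}\,\|\beta(w)\|_{\mathscr{L}^{\infty}}\,\|\theta\|_{\mathscr{L}^{\infty}}\int_{0}^{1}\|\boldsymbol{v}(t,\cdot)\|_{\mathscr{V}}\,dt,
\]
which tends to $0$ as $k\to\infty$, the time integral being finite because $\boldsymbol{v}\in\mathscr{L}_{\mathscr{V}}^{2}(\Omega)\subset\mathscr{L}_{\mathscr{V}}^{1}(\Omega)$. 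Admissibility also gives $\operatorname{div}\boldsymbol{v}(t,\cdot)\in\mathscr{C}^{0}(\overline{\Omega})$ with $\|\operatorname{div}\boldsymbol{v}(t,\cdot)\|_{\mathscr{L}^{\infty}(\Omega)}\le N\|\boldsymbol{v}(t,\cdot)\|_{1,\infty}\lesssim\|\boldsymbol{v}(t,\cdot)\|_{\mathscr{V}}$, so $\lambda(t)\triangleq\|\operatorname{div}\boldsymbol{v}(t,\cdot)\|_{\mathscr{L}^{\infty}(\Omega)}$ lies in $\mathscr{L}^{1}([0,1])$. Letting $k\to\infty$ with dominated convergence on the two remaining terms, and writing $m(t)\triangleq\int_{\Omega}\beta(w(t,x))\,dx$, I would arrive at
\[
\int_{0}^{1}\theta'(t)\,m(t)\,dt+\int_{0}^{1}\theta(t)\int_{\Omega}\beta(w(t,x))\operatorname{div}\boldsymbol{v}(t,x)\,dx\,dt=0\qquad\text{for all }\theta\in\mathscr{C}_{c}^{\infty}([0,1[).
\]
Since $m\in\mathscr{L}^{\infty}([0,1])$ and $t\mapsto\int_{\Omega}\beta(w(t,x))\operatorname{div}\boldsymbol{v}(t,x)\,dx\in\mathscr{L}^{1}([0,1])$, this yields an absolutely continuous representative of $m$ with $m(0)=0$ and $m(t)=\int_{0}^{t}\int_{\Omega}\beta(w(s,x))\operatorname{div}\boldsymbol{v}(s,x)\,dx\,ds$, whence $0\le m(t)\le\int_{0}^{t}\lambda(s)\,m(s)\,ds$. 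Iterating this inequality, $m(t)\le\|m\|_{\mathscr{L}^{\infty}}\big(\int_{0}^{t}\lambda(s)\,ds\big)^{n}/n!$ for every $n\ge1$, so $m\equiv0$; as $\beta\ge0$ vanishes only at $0$, this forces $w=0$ a.e.\ in $[0,1]\times\Omega$, i.e.\ $\tilde{f}=f$, and $f$ is the unique renormalized solution of \eqref{transport}.

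I expect the main obstacle to be exactly this third step: discarding the boundary flux in the cut-off limit and then closing the Gr\"onwall estimate. Both rely on the admissible structure of $\mathscr{V}$, which makes $\boldsymbol{v}$ vanish on $\partial\Omega$ (so that no flux leaves $\Omega$) and, crucially, upgrades the stated hypothesis $\operatorname{div}\boldsymbol{v}\in\mathscr{L}^{N}([0,1]\times\Omega)$ to $\operatorname{div}\boldsymbol{v}\in\mathscr{L}^{1}([0,1];\mathscr{L}^{\infty}(\Omega))$, which is precisely the integrability that lets the linear Gr\"onwall argument go through. A secondary technical point is justifying the absolute continuity of $m$ and the use of time-only test functions in the renormalized identity for $\beta(w)$; both follow from the $\mathscr{L}^{\infty}([0,1];BV(\Omega))$ regularity together with \cref{cor3.7.1}.
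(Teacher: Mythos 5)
Your proposal is correct, and its skeleton coincides with the paper's: existence and the regularity $f\in\mathscr{L}^{\infty}([0,1]\times\Omega)\cap\mathscr{L}^{\infty}([0,1];BV(\Omega))$ come from \cref{modify tranport solution}, and the renormalization property of the bounded admissible field (\cref{cor3.7.1}) is the engine of uniqueness. The difference is in how uniqueness is settled: the paper simply invokes Theorem 3.1.9 of \cite{jarde2018analysis}, whereas you reprove that result from scratch by the classical DiPerna--Lions comparison argument -- take the difference $w$ of two renormalized solutions, renormalize with a nonnegative $\mathscr{C}^{1}$ nonlinearity vanishing only at $0$, test \eqref{weak solution of transport} with $\theta(t)\psi_{k}(x)$, kill the boundary flux using $|\boldsymbol{v}(t,x)|\le\|\boldsymbol{v}(t,\cdot)\|_{1,\infty}\,\mathrm{dist}(x,\partial\Omega)$ (admissibility, $\mathscr{V}\subset\mathscr{C}_{0}^{1}$) together with the $O(1/k)$ volume of the boundary shell of the Lipschitz domain, and close with a Gr\"onwall iteration driven by $\lambda(t)=\|\operatorname{div}\boldsymbol{v}(t,\cdot)\|_{\mathscr{L}^{\infty}(\Omega)}\in\mathscr{L}^{1}([0,1])$, which you correctly extract from the embedding $\mathscr{V}\hookrightarrow\mathscr{C}_{0}^{1}$ rather than from the weaker stated hypothesis $\operatorname{div}\boldsymbol{v}\in\mathscr{L}^{N}([0,1]\times\Omega)$. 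All the delicate points (the segment estimate to the nearest boundary point, the absolutely continuous representative of $m(t)=\int_{\Omega}\beta(w(t,x))\,dx$ with $m(0)=0$ obtained from test functions not vanishing at $t=0$, and the iterated Gr\"onwall bound with $\lambda\in\mathscr{L}^{1}$) are handled correctly. What your route buys is self-containedness and an explicit display of exactly where the admissibility of $\mathscr{V}$ is used (no boundary condition is needed because $\boldsymbol{v}$ vanishes on $\partial\Omega$, and the divergence is automatically in $\mathscr{L}^{1}([0,1];\mathscr{L}^{\infty}(\Omega))$); what the paper's citation buys is brevity, delegating precisely this argument to the quoted uniqueness theorem.
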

\begin{proof}
  Since $\boldsymbol{v}$ has the renormalization property and $f_{0} \in \mathscr{L}^{\infty}(\Omega)$, the solution to transport equation \eqref{transport} then is unique (see theorem 3.1.9 in \cite{jarde2018analysis}).
  Furthermore, \cref{modify tranport solution} imples that $f(t,x) = f_{0} \circ (\phi^{\boldsymbol{v}}_{t})^{-1}(x)$ is the unique  renormalized solution. 
\end{proof}

\subsection{Convergence of the constraint}
This section focus mainly on the convergence of the constraint in the sense of distributions. We will start with some lemmas, which resemble lemma 2 and lemma 3 in \cite{chen2011image}.
\begin{lemma}\label{lem3.9}
  Let $\rho_{\varepsilon}$ be the mollifier in \cref{define2.7}. If $\{\boldsymbol{v}_{n}\}_{n}$ and $\{f_{0}^{n}\}_{n}$ are the uniformly bounded in $\mathscr{L}_{\mathscr{V}}^{2}(\Omega)$ and $BV(\Omega)$, respectively. Moreover, $\{\phi^{\boldsymbol{v}_{n}}_{t}\}_{n}$ is the sequence of diffeomorphism flows in $\Omega$ for $t\in [0,1]$, where $\phi^{\boldsymbol{v}_{n}}_{t}$ is generated by the velocity field $\boldsymbol{v}_{n} \in \mathscr{L}_{\mathscr{V}}^{2}(\Omega)$. 
% Furthermore, the Jacobian determinant sequence $\{\det\nabla\phi^{\boldsymbol{v}_{n}}_{t}\}_{n}$ is uniformly bounded in $\mathscr{L}^{\infty}(\Omega)$ for $t \in [0, 1]$. 
Then, $ \{(f_{0}^{n}\ast \rho_{\varepsilon}) \circ (\phi^{\boldsymbol{v}_{n}}_{t})^{-1}\}_{n}$ is uniformly bounded in $BV(\Omega)$ for $t \in [0,1]$.
\end{lemma}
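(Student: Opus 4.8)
The plan is to bound the $BV(\Omega)$-norm of $(f_0^n \ast \rho_\varepsilon)\circ(\phi^{\boldsymbol{v}_n}_t)^{-1}$ uniformly in $n$ (and $t$), mimicking the change-of-variables argument used implicitly in \cref{transport solution} and the auxiliary results \cref{th3.2,th3.3}. The $BV$-norm has two pieces: the $\mathscr{L}^1$-norm and the total variation $|D(\cdot)|(\Omega)$. For the $\mathscr{L}^1$-part, I would repeat the computation already carried out in the proof of \cref{transport solution}: change variables $y = (\phi^{\boldsymbol{v}_n}_t)^{-1}(x)$, pick up the Jacobian factor $|\det\nabla\phi^{\boldsymbol{v}_n}_t(y)|$, bound it in $\mathscr{L}^\infty$ via \cref{Hadamard} by $N^{N/2}\,\mathrm{Lip}(\phi^{\boldsymbol{v}_n}_t)^N$, and use $\|f_0^n \ast \rho_\varepsilon\|_{\mathscr{L}^1(\Omega)} \le \|f_0^n\|_{\mathscr{L}^1(\Omega)}$. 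So it remains to note that $\mathrm{Lip}(\phi^{\boldsymbol{v}_n}_t)$ is bounded uniformly in $n$ and $t$: this follows from Gronwall applied to the flow ODE \eqref{2.4}, since $\|\boldsymbol{v}_n\|_{\mathscr{L}^2_{\mathscr{V}}(\Omega)}$ is uniformly bounded and $\mathscr{V}$ embeds in $\mathscr{C}^1_0(\Omega,\mathbb{R}^N)$ (so $\nabla\boldsymbol{v}_n(t,\cdot)$ is controlled by $\|\boldsymbol{v}_n(t,\cdot)\|_{\mathscr{V}}$), giving $\mathrm{Lip}(\phi^{\boldsymbol{v}_n}_t) \le \exp(\int_0^1 \|\boldsymbol{v}_n(\tau,\cdot)\|_{1,\infty}\,d\tau) \le \exp(C\|\boldsymbol{v}_n\|_{\mathscr{L}^2_{\mathscr{V}}(\Omega)})$, uniformly bounded.

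For the total-variation part, the key identity is the chain rule for the distributional derivative of a composition with a Lipschitz diffeomorphism: for $g \in BV(\Omega)$ and a $\mathscr{C}^1$-diffeomorphism $\psi$ with Lipschitz inverse, $|D(g\circ\psi)|(\Omega) \le \|\nabla\psi\|_{\mathscr{L}^\infty(\Omega)}\,\|\det\nabla\psi^{-1}\|_{\mathscr{L}^\infty(\Omega)}^{?}\cdots$ — more precisely, one writes $D(g\circ\psi) = (\nabla\psi)^{\mathsf{T}}\,(Dg)\circ\psi$ in the sense of measures (this is exactly the content behind \cref{th3.2}, and can be obtained by approximating $g$ by $g\ast\rho_\delta$, using \cref{pro2.4} and \cref{th2.5}, then passing to the limit), and then estimating the total mass of this pushed-forward measure by another change of variables: $|D(g\circ\psi)|(\Omega) \le \|\nabla\psi\|_{\mathscr{L}^\infty(\Omega)}\,\|\det\nabla(\psi^{-1})\|_{\mathscr{L}^\infty(\Omega)}\,|Dg|(\Omega)$. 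Applying this with $g = f_0^n \ast \rho_\varepsilon$ and $\psi = (\phi^{\boldsymbol{v}_n}_t)^{-1}$, and using $|D(f_0^n \ast \rho_\varepsilon)|(\Omega) \le |Df_0^n|(\Omega)$ (\cref{th2.5}), together with the uniform Lipschitz bounds on both $\phi^{\boldsymbol{v}_n}_t$ and its inverse (the latter from \cref{pro2.13}, $(\phi^{\boldsymbol{v}_n}_t)^{-1} = \phi^{\boldsymbol{v}_n}_{t,0}$, and the same Gronwall argument), one gets $|D((f_0^n\ast\rho_\varepsilon)\circ(\phi^{\boldsymbol{v}_n}_t)^{-1})|(\Omega) \le C\,|Df_0^n|(\Omega)$ with $C$ independent of $n$, $t$, $\varepsilon$. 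Summing the two pieces and using the uniform bound on $\{f_0^n\}_n$ in $BV(\Omega)$ finishes the proof.

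I expect the main obstacle to be the rigorous justification of the chain-rule inequality $|D(g\circ\psi)|(\Omega) \le \|\nabla\psi\|_{\mathscr{L}^\infty}\,\|\det\nabla(\psi^{-1})\|_{\mathscr{L}^\infty}\,|Dg|(\Omega)$ at the level of $BV$ functions and Radon measures: one cannot differentiate $g\circ\psi$ pointwise, so one passes through the mollified $g\ast\rho_\delta$, applies the classical chain rule $\nabla((g\ast\rho_\delta)\circ\psi) = (\nabla\psi)^{\mathsf{T}}\,(\nabla(g\ast\rho_\delta))\circ\psi$, changes variables to estimate $\int_\Omega |\nabla((g\ast\rho_\delta)\circ\psi)|\,dx \le \|\nabla\psi\|_{\mathscr{L}^\infty}\int_\Omega |\nabla(g\ast\rho_\delta)\circ\psi|\,dx = \|\nabla\psi\|_{\mathscr{L}^\infty}\int_\Omega |\nabla(g\ast\rho_\delta)(y)|\,|\det\nabla(\psi^{-1})(y)|\,dy$, invokes \cref{th2.5} to bound $\int_\Omega |\nabla(g\ast\rho_\delta)|\,dy$ by $|Dg|(\Omega)$ (up to boundary-layer terms handled by $\Omega_\delta \uparrow \Omega$), and finally uses lower semicontinuity of the total variation under $\mathscr{L}^1$-convergence (\cref{th2.6}, since $(g\ast\rho_\delta)\circ\psi \to g\circ\psi$ in $\mathscr{L}^1$ by \cref{th3.2}) to transfer the bound to $|D(g\circ\psi)|(\Omega)$. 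Once this single estimate is in hand, the uniformity in $n$ is immediate from the uniform flow bounds, so the remaining work is genuinely routine.
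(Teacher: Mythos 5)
Your proposal is correct and takes essentially the same route as the paper's proof of \cref{lem3.9}: the $\mathscr{L}^{1}$ part by the change of variables $y=(\phi^{\boldsymbol{v}_{n}}_{t})^{-1}(x)$ with the Jacobian controlled through Gronwall and \cref{Hadamard}, and the variation part by the chain rule for the smooth function $f_{0}^{n}\ast\rho_{\varepsilon}$ followed by the same change of variables, with $\int_{\Omega}|\nabla(f_{0}^{n}\ast\rho_{\varepsilon})|$ bounded via \cref{pro2.4}, \cref{th2.5} and an explicit boundary-layer term. The only real difference is that you retain the $\nabla\big((\phi^{\boldsymbol{v}_{n}}_{t})^{-1}\big)$ factor produced by the chain rule and bound it by a second Gronwall estimate for the inverse flow, a factor the paper's displayed change-of-variables identity drops, so your variation estimate is in fact slightly more careful while leaving the overall argument unchanged.
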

\begin{proof}
  Fix $0< t \leq 1$ and $\varepsilon > 0$. Define
  \[
      \Omega_{\varepsilon} \triangleq \{ x \in \Omega : \operatorname{dist}(x,\partial \Omega) > \varepsilon \} 
  \]
  such that $\mathcal{L}^{N}(\Omega \backslash \Omega_{\varepsilon})$ is finite, where $\mathcal{L}^{N}$ denotes the Lebesgue measure.

  Let  $Lip(\phi^{\boldsymbol{v}_{n}}_{t}) = \|\nabla\phi^{\boldsymbol{v}_{n}}_{t}\|_{\mathscr{L}^{\infty}(\Omega)^{N \times N}}$. Using Gronwall's lemma in \cite{aubert2006mathematical} yields
  \[
      Lip(\phi^{\boldsymbol{v}_{n}}_{t}) \leq \exp\left(\int_{0}^{t}Lip(\boldsymbol{v}_{n}(s,\cdot))ds\right).
  \]
  Since $\{\boldsymbol{v}_{n}\}_{n}$ is uniformly bounded in $\mathscr{L}_{\mathscr{V}}^{2}(\Omega)$, we obtain
  \[
      Lip(\phi^{\boldsymbol{v}_{n}}_{t}) \leq L,
  \]
  where $L$ is a positive constant. By \cref{Hadamard}, we derive
  \[
      \|\det\nabla\phi^{\boldsymbol{v}_{n}}_{t}\|_{\mathscr{L}^{\infty}(\Omega)} \leq N^{\frac{N}{2}} Lip(\phi^{\boldsymbol{v}_{n}}_{t})^{N} \leq N^{\frac{N}{2}} L^{N}, 
  \]
  which implies that $\{\det\nabla\phi^{\boldsymbol{v}_{n}}_{t}\}_{n}$ is uniformly bounded in $\mathscr{L}^{\infty}(\Omega)$ for $t \in [0, 1]$. 

  Let $M$ and $C_{1}$ denote the upper bounds of $\{f_{0}^{n}\}_{n}$ in $BV(\Omega)$ and $\{\det\nabla \phi^{\boldsymbol{v}_{n}}_{t}\}_{n}$ in $\mathscr{L}^{\infty}(\Omega)$, respectively.
  We now verify the $\mathscr{L}^{1}$-norm by setting $y = (\phi^{\boldsymbol{v}_{n}}_{t})^{-1}(x)$
  \[
      \begin{aligned}
          \int_{\Omega}|(f_{0}^{n}\ast \rho_{\varepsilon}) \circ (\phi^{\boldsymbol{v}_{n}}_{t})^{-1}(x)|dx 
          &=\int_{\Omega}| f_{0}^{n}\ast \rho_{\varepsilon} (y)||\det\nabla \phi^{\boldsymbol{v}_{n}}_{t}(y)|dy\\[2mm]
          &\leq  C_{1} \int_{\Omega}|f_{0}^{n}\ast \rho_{\varepsilon} (y)|dy\\[2mm]
%          &= C_{1} \int_{\Omega} \Big|\int_{\Omega} f_{0}^{n}(z) \rho_{\varepsilon} (y-z) dz\Big| dy\\[2mm]
%          &\leq C_{1} \int_{\Omega} \int_{\Omega} |f_{0}^{n}(z) \rho_{\varepsilon} (y-z)| dz dy\\[2mm]
%          &= C_{1} \int_{\Omega} |f_{0}^{n}(z)|\int_{\Omega}  |\rho_{\varepsilon} (y-z)| dy dz\\[2mm]
          &\leq C_{1} \|f_{0}^{n}\|_{\mathscr{L}^{1}(\Omega)}.
      \end{aligned}
  \]
  It remains to show the variation norm. Let $y = (\phi^{\boldsymbol{v}_{n}}_{t})^{-1}( x)$, we can derive
   \[
      \begin{aligned}
          \int_{\Omega}|\nabla((f_{0}^{n}\ast \rho_{\varepsilon}) \circ (\phi^{\boldsymbol{v}_{n}}_{t})^{-1}(x))|dx 
          &=\int_{\Omega}|\nabla(f_{0}^{n}\ast \rho_{\varepsilon} (y))||\det\nabla \phi^{\boldsymbol{v}_{n}}_{t}(y)|dy\\[2mm]
          &\leq  C_{1} \int_{\Omega}|\nabla(f_{0}^{n}\ast \rho_{\varepsilon} (y))|dy.
      \end{aligned}
  \]
  The estimate of $\int_{\Omega}|\nabla(f_{0}^{n}\ast \rho_{\varepsilon} (y))|dy$ is as follow. By the definition of $\rho_{\varepsilon}$, there exists $C_{2} > 0$ such that 
  \[
      \int_{\Omega \backslash \Omega_{\varepsilon}}|\nabla\rho_{\varepsilon} |dy < C_{2}.
  \]
  Hence, it holds 
  \[
  \begin{aligned}
      \int_{\Omega}|\nabla(f_{0}^{n}\ast \rho_{\varepsilon} (y))|dy 
      &= \int_{\Omega \backslash \Omega_{\varepsilon}}|\nabla(f_{0}^{n}\ast \rho_{\varepsilon} (y))|dy + \int_{\Omega_{\varepsilon}}|\nabla(f_{0}^{n}\ast \rho_{\varepsilon} (y))|dy \\[2mm]
      &=\int_{\Omega \backslash \Omega_{\varepsilon}} \Big| \int_{\Omega} f_{0}^{n}(z) \nabla_{y}(\rho_{\varepsilon} (y - z))dz \Big|dy + \int_{\Omega_{\varepsilon}}|\nabla(f_{0}^{n}\ast \rho_{\varepsilon} (y))|dy \\[2mm]
      &\leq \int_{\Omega} |f_{0}^{n}(z)| \int_{\Omega \backslash \Omega_{\varepsilon}}  |\nabla_{y}(\rho_{\varepsilon} (y - z))|dy dz + \int_{\Omega_{\varepsilon}}|\nabla(f_{0}^{n}\ast \rho_{\varepsilon} (y))|dy \\[2mm]
      &\leq C_{2} \|f_{0}^{n}\|_{\mathscr{L}^{1}(\Omega)} + \int_{\Omega_{\varepsilon}}|\nabla(f_{0}^{n}\ast \rho_{\varepsilon} (y))|dy.
  \end{aligned}
  \]
  With the aid of \cref{pro2.4} and \cref{th2.5}, we have
  \[
      \int_{\Omega_{\varepsilon}}|\nabla(f_{0}^{n}\ast \rho_{\varepsilon} (y))|dy \leq \int_{\Omega_{\varepsilon}}|Df_{0}^{n}\ast \rho_{\varepsilon} (y)|dy \leq |Df_{0}^{n}|(\Omega).
  \]
  From these facts, it follows that 
   \[
      \begin{aligned}
          \int_{\Omega}|\nabla((f_{0}^{n}\ast \rho_{\varepsilon}) \circ (\phi^{\boldsymbol{v}_{n}}_{t})^{-1}(x))|dx 
          &\leq C_{1} C_{2} \|f_{0}^{n}\|_{\mathscr{L}^{1}(\Omega)} + C_{1} |Df_{0}^{n}|(\Omega)\\[2mm]
          &\leq (C_{1} C_{2} + C_{1})M.
      \end{aligned}
  \]
  This completes the proof.
\end{proof}
\begin{lemma}\label{lem3.11}
  Let $\rho_{\varepsilon}$ be the mollifier in \cref{define2.7}. If $\{\boldsymbol{v}_{n}\}_{n}$ and $\{f_{0}^{n}\}_{n}$ are the uniformly bounded in $\mathscr{L}_{\mathscr{V}}^{2}(\Omega)$ and $BV(\Omega)$, respectively. Moreover, $\phi^{\boldsymbol{v}_{n}}_{t}$ is generated by the velocity field $\boldsymbol{v}_{n} \in \mathscr{L}_{\mathscr{V}}^{2}(\Omega)$. Define
  \[
      f_{\varepsilon}^{n}(t,x) \triangleq (f_{0}^{n}\ast \rho_{\varepsilon}) \circ (\phi^{\boldsymbol{v}_{n}}_{t})^{-1}(x).
  \]
  Then, there exists a subsequence $\{f_{\varepsilon}^{n_{k}}\}_{n_{k}}$ of $\{f_{\varepsilon}^{n}\}_{n}$ that converges to some limit $f_{\varepsilon}$ in $\mathscr{L}^{2}([0, 1]; \mathscr{L}^{p}(\Omega))$ with $1 \leq p < \frac{N}{N-1}$. Furthermore, $\{f_{\varepsilon}^{n_{k}}(t,\cdot)\}_{n_{k}}$ converges to $f_{\varepsilon}(t,\cdot)$ in $\mathscr{L}^{p}(\Omega)$ with $1 \leq p < \frac{N}{N-1}$ and weakly to $f_{\varepsilon}(t,\cdot)$ in $\mathscr{L}^{\frac{N}{N-1}}(\Omega)$ for $t\in [0, 1]$.
\end{lemma}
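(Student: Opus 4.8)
The plan is to exhibit $\{f^{n}_{\varepsilon}\}_{n}$ as a bounded, equicontinuous family in $\mathscr{C}([0,1];\mathscr{L}^{p}(\Omega))$ and to apply the Arzel\`a--Ascoli theorem for functions valued in a Banach space: the resulting subsequence converges uniformly in $t$, and from this all three claimed modes of convergence follow. So there are two things to verify, with $\varepsilon>0$ fixed as in the statement and all constants below independent of $n$: (i) for each fixed $t\in[0,1]$ the set $\{f^{n}_{\varepsilon}(t,\cdot)\}_{n}$ is relatively compact in $\mathscr{L}^{p}(\Omega)$; and (ii) $\{f^{n}_{\varepsilon}\}_{n}$ is equicontinuous in $t$, uniformly in $n$. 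Point (i) is immediate from \cref{lem3.9} and \cref{th2.7}: \cref{lem3.9} gives a bound on $\{f^{n}_{\varepsilon}(t,\cdot)\}_{n}$ in $BV(\Omega)$ independent of $t$ and $n$, hence \cref{th2.7} yields both the relative compactness in $\mathscr{L}^{p}(\Omega)$ for every $1\le p<\frac{N}{N-1}$ and a uniform bound in $\mathscr{L}^{\frac{N}{N-1}}(\Omega)$; the estimate in (ii) will moreover show that each $f^{n}_{\varepsilon}\in\mathscr{C}([0,1];\mathscr{L}^{p}(\Omega))$.

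The heart of the proof is a time modulus of continuity for the inverse flows that is uniform in $n$. Because $\mathscr{V}$ is admissible (\cref{def2.2}), $\|\boldsymbol{v}_{n}(\tau,\cdot)\|_{1,\infty}\le C\|\boldsymbol{v}_{n}(\tau,\cdot)\|_{\mathscr{V}}$, so the Gronwall estimate used in the proof of \cref{lem3.9}, applied to the inverse flow, gives $Lip\big((\phi^{\boldsymbol{v}_{n}}_{t})^{-1}\big)\le L$ for all $t$ and $n$. Differentiating in $t$ the identity $\phi^{\boldsymbol{v}_{n}}_{0,t}\big((\phi^{\boldsymbol{v}_{n}}_{t})^{-1}(x)\big)=x$ (valid by \cref{pro2.13} and \eqref{2.7}; the inner map $t\mapsto(\phi^{\boldsymbol{v}_{n}}_{t})^{-1}(x)$ is absolutely continuous, as in the proof of \cref{th3.5}) and using the flow equation \eqref{2.4} gives $\partial_{t}(\phi^{\boldsymbol{v}_{n}}_{t})^{-1}(x)=-\nabla(\phi^{\boldsymbol{v}_{n}}_{t})^{-1}(x)\,\boldsymbol{v}_{n}(t,x)$, and therefore, for $s,t\in[0,1]$,
\[
\big|(\phi^{\boldsymbol{v}_{n}}_{t})^{-1}(x)-(\phi^{\boldsymbol{v}_{n}}_{s})^{-1}(x)\big|\le L\int_{s}^{t}\|\boldsymbol{v}_{n}(\tau,\cdot)\|_{\mathscr{L}^{\infty}(\Omega)}\,d\tau\le LC\,|t-s|^{1/2}\,\|\boldsymbol{v}_{n}\|_{\mathscr{L}_{\mathscr{V}}^{2}(\Omega)}\le C'\,|t-s|^{1/2}
\]
uniformly in $n$ and $x$, by Cauchy--Schwarz and the uniform bound on $\{\boldsymbol{v}_{n}\}_{n}$ in $\mathscr{L}_{\mathscr{V}}^{2}(\Omega)$.

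Composing with the Lipschitz bound for the mollified data, $Lip(f_{0}^{n}\ast\rho_{\varepsilon})\le\|\nabla\rho_{\varepsilon}\|_{\mathscr{L}^{\infty}(\mathbb{R}^{N})}\|f_{0}^{n}\|_{\mathscr{L}^{1}(\Omega)}\le C_{\varepsilon}M$ (using $\nabla(f_{0}^{n}\ast\rho_{\varepsilon})=f_{0}^{n}\ast\nabla\rho_{\varepsilon}$ and the $BV$-bound $M$ from \cref{lem3.9}), we obtain
\[
\|f^{n}_{\varepsilon}(t,\cdot)-f^{n}_{\varepsilon}(s,\cdot)\|_{\mathscr{L}^{\infty}(\Omega)}\le C_{\varepsilon}M\,C'\,|t-s|^{1/2},\qquad\text{hence}\qquad\|f^{n}_{\varepsilon}(t,\cdot)-f^{n}_{\varepsilon}(s,\cdot)\|_{\mathscr{L}^{p}(\Omega)}\le\widetilde{C}\,|t-s|^{1/2},
\]
with $\widetilde{C}$ independent of $n$, which is the uniform equicontinuity of (ii). I expect this to be the main obstacle: one needs to control $t\mapsto(\phi^{\boldsymbol{v}_{n}}_{t})^{-1}$ in a manner uniform over the whole sequence, and it is precisely the uniform Lipschitz bound on the flows from \cref{lem3.9}, the admissible embedding, and the Cauchy--Schwarz trick in time that supply this, with the (fixed) mollification transferring the flow estimate to $f^{n}_{\varepsilon}$ itself.

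With (i) and (ii) in hand, Arzel\`a--Ascoli yields a subsequence $\{f^{n_{k}}_{\varepsilon}\}_{n_{k}}$ converging in $\mathscr{C}([0,1];\mathscr{L}^{p}(\Omega))$ to a limit $f_{\varepsilon}$ (to obtain one subsequence good for all $1\le p<\frac{N}{N-1}$ at once, diagonalize over $p_{j}\nearrow\frac{N}{N-1}$ and use $\mathscr{L}^{p_{j}}(\Omega)\hookrightarrow\mathscr{L}^{p}(\Omega)$; one could equally build the subsequence by a Cantor diagonal over a countable dense set of times and close up with (ii)). In particular $f^{n_{k}}_{\varepsilon}(t,\cdot)\to f_{\varepsilon}(t,\cdot)$ in $\mathscr{L}^{p}(\Omega)$ for every $t\in[0,1]$, and since $\mathscr{C}([0,1];\mathscr{L}^{p}(\Omega))$ embeds continuously into $\mathscr{L}^{2}([0,1];\mathscr{L}^{p}(\Omega))$, also $f^{n_{k}}_{\varepsilon}\to f_{\varepsilon}$ in $\mathscr{L}^{2}([0,1];\mathscr{L}^{p}(\Omega))$. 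Finally, for each fixed $t$ the sequence $\{f^{n_{k}}_{\varepsilon}(t,\cdot)\}_{n_{k}}$ is bounded in $\mathscr{L}^{\frac{N}{N-1}}(\Omega)$ by \cref{lem3.9} and \cref{th2.7}, which is reflexive since $N\ge2$; as it already converges to $f_{\varepsilon}(t,\cdot)$ in $\mathscr{L}^{1}(\Omega)$, every weakly convergent subsequence has limit $f_{\varepsilon}(t,\cdot)$, so the whole sequence converges weakly to $f_{\varepsilon}(t,\cdot)$ in $\mathscr{L}^{\frac{N}{N-1}}(\Omega)$. This gives all the assertions of the lemma.
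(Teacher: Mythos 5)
Your proof is correct, but it follows a genuinely different route from the paper's. The paper argues pointwise in time: for fixed $t$ it extracts, via the uniform $BV$ bound of \cref{lem3.9} and the compactness in \cref{th2.7}, a subsequence converging strongly in $\mathscr{L}^{p}(\Omega)$ and weakly in $\mathscr{L}^{\frac{N}{N-1}}(\Omega)$, identifies the two limits by testing against $\mathscr{L}^{q}$ and $\mathscr{L}^{N}$ functions, and then upgrades to convergence in $\mathscr{L}^{2}([0,1];\mathscr{L}^{p}(\Omega))$ by dominating $\|f_{\varepsilon}^{n_{k}}(t,\cdot)-f_{\varepsilon}(t,\cdot)\|_{\mathscr{L}^{p}(\Omega)}$ with an integrable function and invoking dominated convergence. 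You instead prove a quantitative, $n$-uniform H\"older-$\tfrac12$ modulus in time for the inverse flows (uniform Lipschitz bound from Gronwall plus admissibility, the identity $\partial_{t}(\phi^{\boldsymbol{v}_{n}}_{t})^{-1}(x)=-\nabla(\phi^{\boldsymbol{v}_{n}}_{t})^{-1}(x)\,\boldsymbol{v}_{n}(t,x)$, and Cauchy--Schwarz in time), transfer it to $f_{\varepsilon}^{n}$ through the fixed mollification, and conclude with a Banach-valued Arzel\`a--Ascoli argument; all three claimed convergences then drop out. What your approach buys is significant: it produces a single subsequence converging uniformly in $t$, which in particular resolves the delicate point that in the paper's argument the subsequence is extracted at a fixed $t$ and could a priori depend on $t$ (the paper passes over this when it later integrates in time), whereas your equicontinuity estimate makes the $t$-uniformity explicit. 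The price is somewhat more machinery -- a.e.-$t$ differentiability of $t\mapsto(\phi^{\boldsymbol{v}_{n}}_{t})^{-1}(x)$, the chain-rule identity for the inverse flow, and the $\varepsilon$-dependent Lipschitz constant of $f_{0}^{n}\ast\rho_{\varepsilon}$ (harmless here since $\varepsilon$ is fixed) -- while the paper's route stays closer to soft compactness and weak-limit identification. Your diagonalization over $p_{j}\nearrow\frac{N}{N-1}$ and the reflexivity argument for the weak $\mathscr{L}^{\frac{N}{N-1}}$ convergence are both sound.
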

\begin{proof}
  Fix $0 < t \leq 1$, \cref{lem3.9} implies that $\{f_{\varepsilon}^{n}(t,\cdot)\}_{n}$ is uniformly bounded in $BV(\Omega)$  for $t \in [0,1]$. Then, there exists a subsequence $\{f_{\varepsilon}^{n_{k}}(t,\cdot)\}_{n_{k}}\subset \{f_{\varepsilon}^{n}(t,\cdot))\}_{n}$ converging to some limit $f_{\varepsilon}(t,\cdot)$ in $\mathscr{L}^{p}(\Omega)$ with $1 \leq p < \frac{N}{N-1}$ as $n_{k} \rightarrow \infty$ according to \cref{th2.7}.

  Moreover, according to theorem 2.1.1 in \cite{aubert2006mathematical} and \cref{lem3.9}, we observe that there exists a subsequence $\{f_{\varepsilon}^{n_{k_{j}}}(t,\cdot)\}_{n_{k_{j}}}\subset \{f_{\varepsilon}^{n_{k}}(t,\cdot)\}_{n_{k}}$  converging weakly to some limit $g_{\varepsilon}(t,\cdot)$ in $\mathscr{L}^{\frac{N}{N-1}}(\Omega)$ as $n_{k_{j}} \rightarrow \infty$. Obviously, $\{f_{\varepsilon}^{n_{k_{j}}}(t,\cdot)\}_{n_{k_{j}}}$ converges to some limit $f_{\varepsilon}(t,\cdot)$ in $\mathscr{L}^{p}(\Omega)$ with $1 \leq p < \frac{N}{N-1}$ as $n_{k_{j}} \rightarrow \infty$. For convenience, let $\{f_{\varepsilon}^{n_{k}}(t,\cdot)\}_{n_{k}}$ denote $\{f_{\varepsilon}^{n_{k_{j}}}(t,\cdot)\}_{n_{k_{j}}}$.

  We claim now that $f_{\varepsilon}(t,\cdot) = g_{\varepsilon}(t,\cdot)$. Indeed,  $\{f_{\varepsilon}^{n_{k}}(t,\cdot)\}_{n_{k}}$ converges weakly to some limit $g_{\varepsilon}(t,\cdot)$ in $\mathscr{L}^{\frac{N}{N-1}}(\Omega)$ as $n_{k} \rightarrow \infty$. Hence, it holds
  \[
      \int_{\Omega}f_{\varepsilon}^{n_{k}}(t,x)\varphi(x)dx \longrightarrow \int_{\Omega}g_{\varepsilon}(t,x)\varphi(x)dx\quad \text{as}~ n_{k} \longrightarrow \infty 
  \]
  for $\varphi \in \mathscr{L}^{N}(\Omega)$. On the other hand, $\{f_{\varepsilon}^{n_{k}}(t,\cdot)\}_{n_{k}}$ converges to some limit $f_{\varepsilon}(t,\cdot)$ in $\mathscr{L}^{p}(\Omega)$ with $1 \leq p < \frac{N}{N-1}$ as $n_{k} \rightarrow \infty$. Then, for $\psi \in \mathscr{L}^{q}(\Omega)$ such that $\frac{1}{p} + \frac{1}{q} = 1$, we deduce 
  \[
      \int_{\Omega}f_{\varepsilon}^{n_{k}}(t,x)\psi(x)dx \longrightarrow \int_{\Omega}f_{\varepsilon}(t,x)\psi(x)dx\quad \text{as}~ n_{k} \longrightarrow \infty.
  \]
  The fact $\mathscr{L}^{q}(\Omega) \subset \mathscr{L}^{N}(\Omega)$ implies that $f_{\varepsilon}(t,\cdot) = g_{\varepsilon}(t,\cdot)$.

  It remains to show that $\{f_{\varepsilon}^{n_{k}}\}_{n_{k}}$ converges to in $\mathscr{L}^{2}([0,1];\mathscr{L}^{p}(\Omega))$ with $1 \leq p < \frac{N}{N-1}$ to $f_{\varepsilon}$. 
  Indeed, it follows that
  \[
      \| f_{\varepsilon}^{n_{k}}(t,\cdot) - f_{\varepsilon}(t,\cdot)\|_{\mathscr{L}^{p}(\Omega)} \longrightarrow 0.
  \]
 From the boundedness of $\{f_{\varepsilon}^{n_{k}}(t,\cdot)\}_{n_{k}}$ in $BV(\Omega)$ and \cref{th2.7}, one has
  \[
  \begin{aligned}
      \| f_{\varepsilon}^{n_{k}}(t,\cdot) - f_{\varepsilon}(t,\cdot)\|_{\mathscr{L}^{p}(\Omega)}
      &\leq \| f_{\varepsilon}^{n_{k}}(t,\cdot)\|_{\mathscr{L}^{p}(\Omega)} + \|f_{\varepsilon}(t,\cdot)\|_{\mathscr{L}^{p}(\Omega)}\\[2mm]
      &\leq \mathcal{L}^{N}(\Omega)^{1 - \frac{(N- 1)p}{N}} \| f_{\varepsilon}^{n_{k}}(t,\cdot)\|^{p}_{\mathscr{L}^{\frac{N}{N-1}}(\Omega)} + \|f_{\varepsilon}(t,\cdot)\|_{\mathscr{L}^{p}(\Omega)}\\[2mm]
      &\leq h_{\varepsilon}(t) + \|f_{\varepsilon}(t,\cdot)\|_{\mathscr{L}^{p}(\Omega)},
  \end{aligned}
  \]
  where $\mathcal{L}^{N}$ is the Lebesgue measure and $h_{\varepsilon}(t)$ is an integrable function. 
  
  Therefore, the Lebesgue's dominated convergence theorem yields
  \[
  \begin{aligned}
      \lim_{n_{k} \rightarrow \infty} &\int_{0}^{1} \| f_{\varepsilon}^{n_{k}}(t,\cdot) - f_{\varepsilon}(t,\cdot)\|_{\mathscr{L}^{p}(\Omega)}^{2}dt \\[2mm]
      =& \int_{0}^{1}\lim_{n_{k} \rightarrow \infty} \| f_{\varepsilon}^{n_{k}}(t,\cdot) - f_{\varepsilon}(t,\cdot)\|_{\mathscr{L}^{p}(\Omega)}^{2}dt = 0,
  \end{aligned}
  \]
  where $1 \leq p < \frac{N}{N-1}$.
  This completes the proof.
\end{proof}

The following theorem demonstrates the stability of the nonlinear solution operator $S_{tran}$.
\begin{theorem}[Stability of $S_{tran}$]\label{th3.12}
    Assume that $1\leq p < \frac{N}{N-1}$ with $\frac{1}{p} + \frac{1}{q} = 1$. Let $f_{0} \in \mathscr{L}^{\infty}(\Omega) \cap BV(\Omega)$ and $\boldsymbol{v} \in \mathscr{L}_{\mathscr{V}}^{2}(\Omega)$ be a bounded velocity field. Furthermore, let $\{\boldsymbol{v}_{n}\}_{n} \subset \mathscr{L}_{\mathscr{V}}^{2}(\Omega)$ and $\{f_{0}^{n}\}_{n} \subset \mathscr{L}^{\infty}(\Omega) \cap BV(\Omega)$ be two sequences such that
    \begin{enumerate}[(1)]
        \item $\{\boldsymbol{v}_{n}\}_{n}$ is uniformly bounded in $\mathscr{L}_{\mathscr{V}}^{2}(\Omega)$ and converges weakly to $\boldsymbol{v}$ in $\mathscr{L}^{2}([0,1];\\ \mathscr{L}^{q}(\Omega;\mathbb{R}^{N}))$,
        \item $\{\operatorname{div}\boldsymbol{v}_{n}\}_{n}$ is uniformly bounded in $\mathscr{L}^{2}([0,1]; \mathscr{L}^{\infty}(\Omega))$ and converges weakly$^{\star}$ to $\operatorname{div}\boldsymbol{v}$ in $\mathscr{L}^{2}([0,1]; \mathscr{L}^{\infty}(\Omega))$,
        \item $\{f_{0}^{n}\}_{n}$ is uniformly bounded in $\mathscr{L}^{\infty}(\Omega) \cap BV(\Omega)$ and converges weakly to $f_{0}$ in $\mathscr{L}^{\frac{N}{N-1}}(\Omega)$.
    \end{enumerate}  
    Let $\{f^{n}\}_{n} \subset \mathscr{L}^{\infty}([0,1] \times \Omega) \cap \mathscr{L}^{\infty}([0,1];BV(\Omega))$ be the sequence of the solution of transport equation \eqref{transport} with velocity field $\boldsymbol{v}_{n} \in \mathscr{L}_{\mathscr{V}}^{2}(\Omega)$ and initial value $f_{0}^{n} \in \mathscr{L}^{\infty}(\Omega) \cap BV(\Omega)$ such that
     \begin{enumerate}[(4)]
        \item $\{f^{n}\}_{n}$ converges to $f \in \mathscr{L}^{2}([0,1];\mathscr{L}^{p}(\Omega))$ in $\mathscr{L}^{2}([0,1];\mathscr{L}^{1}(\Omega))$.
    \end{enumerate} 
    Then, $f = S_{tran}(f_{0},\boldsymbol{v})$, that is, $f \in \mathscr{L}^{\infty}([0,1] \times \Omega) \cap \mathscr{L}^{\infty}([0,1];BV(\Omega))$ is the solution of transport equation \eqref{transport} with velocity field $\boldsymbol{v}$ and initial value $f_{0}$.
\end{theorem}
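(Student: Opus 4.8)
The plan is to pass to the limit in the weak formulation \eqref{weak solution of transport} of transport equation \eqref{transport}. By \cref{th3.8.1} each $f^n$ is the unique renormalized solution with data $(f_0^n,\boldsymbol{v}_n)$, it admits the representation $f^n(t,x)=f_0^n\circ(\phi^{\boldsymbol{v}_n}_t)^{-1}(x)$, and in particular it is a weak solution, so
\[
\int_0^1\!\!\int_\Omega f^n\big[\partial_t\varphi+\varphi\operatorname{div}\boldsymbol{v}_n+\boldsymbol{v}_n\cdot\nabla\varphi\big]\,dx\,dt=-\int_\Omega f_0^n(x)\varphi(0,x)\,dx
\]
for every $\varphi\in\mathscr{C}_c^\infty([0,1[\times\Omega)$. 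The goal is to show that the prescribed limit $f$ satisfies the same identity with $(f_0,\boldsymbol{v})$, and then to identify $f$ with $S_{tran}(f_0,\boldsymbol{v})$ by uniqueness.

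First I would pass to the limit term by term. The $\partial_t\varphi$ term converges because $f^n\to f$ in $\mathscr{L}^2([0,1];\mathscr{L}^1(\Omega))$ by hypothesis (4) and $\partial_t\varphi\in\mathscr{L}^\infty([0,1]\times\Omega)$. For the $\operatorname{div}\boldsymbol{v}_n$ term I would use the decomposition $f^n\varphi\operatorname{div}\boldsymbol{v}_n-f\varphi\operatorname{div}\boldsymbol{v}=(f^n-f)\varphi\operatorname{div}\boldsymbol{v}_n+f\varphi(\operatorname{div}\boldsymbol{v}_n-\operatorname{div}\boldsymbol{v})$: the first summand has $\mathscr{L}^1([0,1]\times\Omega)$-norm at most $\|\varphi\|_{\mathscr{L}^\infty}\|\operatorname{div}\boldsymbol{v}_n\|_{\mathscr{L}^2([0,1];\mathscr{L}^\infty(\Omega))}\|f^n-f\|_{\mathscr{L}^2([0,1];\mathscr{L}^1(\Omega))}$ by H\"older in $t$, which tends to $0$ by (2) and (4); the second summand tends to $0$ because $f\varphi\in\mathscr{L}^2([0,1];\mathscr{L}^1(\Omega))$ (since $f\in\mathscr{L}^2([0,1];\mathscr{L}^p(\Omega))\subset\mathscr{L}^2([0,1];\mathscr{L}^1(\Omega))$) and, by (2), $\operatorname{div}\boldsymbol{v}_n$ converges weakly$^{\star}$ to $\operatorname{div}\boldsymbol{v}$ in $\mathscr{L}^2([0,1];\mathscr{L}^\infty(\Omega))$. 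The $\boldsymbol{v}_n\cdot\nabla\varphi$ term is handled the same way via $(f^n-f)\boldsymbol{v}_n\cdot\nabla\varphi+f(\boldsymbol{v}_n-\boldsymbol{v})\cdot\nabla\varphi$: the first part vanishes since $\{\boldsymbol{v}_n\}$ is uniformly bounded in $\mathscr{L}^2([0,1];\mathscr{L}^\infty(\Omega;\mathbb{R}^N))$ (from the uniform $\mathscr{L}_{\mathscr{V}}^2(\Omega)$-bound in (1) and the admissible embedding $\mathscr{V}\hookrightarrow\mathscr{C}_0^1(\Omega;\mathbb{R}^N)$) together with (4) and H\"older in $t$; the second part vanishes since $f\nabla\varphi\in\mathscr{L}^2([0,1];\mathscr{L}^p(\Omega;\mathbb{R}^N))$ and, by (1), $\boldsymbol{v}_n$ converges weakly to $\boldsymbol{v}$ in $\mathscr{L}^2([0,1];\mathscr{L}^q(\Omega;\mathbb{R}^N))$ with $\tfrac1p+\tfrac1q=1$. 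Finally the right-hand side converges since $f_0^n$ converges weakly to $f_0$ in $\mathscr{L}^{N/(N-1)}(\Omega)$ by (3) and $\varphi(0,\cdot)\in\mathscr{C}_c^\infty(\Omega)\subset\mathscr{L}^N(\Omega)$. Hence $f$ satisfies \eqref{weak solution of transport} with data $(f_0,\boldsymbol{v})$, i.e.\ $f$ is a weak solution of \eqref{transport}.

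Next I would verify that $f$ has the required regularity. The uniform $\mathscr{L}^\infty(\Omega)$-bound on $\{f_0^n\}$ from (3) and the representation $f^n=f_0^n\circ(\phi^{\boldsymbol{v}_n}_t)^{-1}$ (composition with a diffeomorphism preserves the essential supremum) show that $\{f^n\}$ is uniformly bounded in $\mathscr{L}^\infty([0,1]\times\Omega)$; extracting a subsequence with $f^n\to f$ a.e.\ on $[0,1]\times\Omega$ (available from the $\mathscr{L}^2([0,1];\mathscr{L}^1(\Omega))$-convergence) gives $f\in\mathscr{L}^\infty([0,1]\times\Omega)$. Running the estimates of \cref{lem3.9} (letting $\varepsilon\to0$, and using \cref{pro2.4,th2.5,th3.2}) yields $\|f^n(t,\cdot)\|_{BV(\Omega)}\le C\|f_0^n\|_{BV(\Omega)}$ uniformly in $n$ and $t$; passing to a further subsequence with $f^n(t,\cdot)\to f(t,\cdot)$ in $\mathscr{L}^1(\Omega)$ for a.e.\ $t$ and invoking lower semicontinuity of the total variation under $\mathscr{L}^1$-convergence gives $f\in\mathscr{L}^\infty([0,1];BV(\Omega))$.

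To conclude, note that $\operatorname{div}\boldsymbol{v}\in\mathscr{L}^2([0,1];\mathscr{L}^\infty(\Omega))\subset\mathscr{L}^N([0,1]\times\Omega)$ (as $\Omega$ is bounded) and $\boldsymbol{v}$ is a bounded velocity field, so by \cref{cor3.7.1} it has the renormalization property, whence by \cref{th3.8.1} the bounded weak solution of \eqref{transport} with data $(f_0,\boldsymbol{v})$ is unique and equals $f_0\circ(\phi^{\boldsymbol{v}}_t)^{-1}=S_{tran}(f_0,\boldsymbol{v})$; therefore $f=S_{tran}(f_0,\boldsymbol{v})$, and since the limit is unique the whole sequence (not just a subsequence) converges. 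I expect the main obstacle to be the passage to the limit in the products $f^n\operatorname{div}\boldsymbol{v}_n$ and $f^n\,\boldsymbol{v}_n\cdot\nabla\varphi$: neither factor converges strongly in a space dual to the one in which the other converges weakly, so one must peel off the off-diagonal error term and control it by balancing the strong $\mathscr{L}^2([0,1];\mathscr{L}^1(\Omega))$-convergence of $f^n$ against the uniform $\mathscr{L}^2([0,1];\mathscr{L}^\infty(\Omega))$-bounds on $\boldsymbol{v}_n$ and $\operatorname{div}\boldsymbol{v}_n$, while pairing the fixed limit $f\in\mathscr{L}^2([0,1];\mathscr{L}^p(\Omega))$ against the weak limits; keeping the conjugate exponents in $x$ and the H\"older pairing in $t$ consistent throughout is the delicate bookkeeping.
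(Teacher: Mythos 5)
Your proposal is correct and follows essentially the same route as the paper: pass to the limit in the weak formulation \eqref{weak solution of transport} using exactly the same decompositions $(f^{n}-f)\varphi\operatorname{div}\boldsymbol{v}_{n}+f\varphi(\operatorname{div}\boldsymbol{v}_{n}-\operatorname{div}\boldsymbol{v})$ and $(f^{n}-f)\boldsymbol{v}_{n}\cdot\nabla\varphi+f(\boldsymbol{v}_{n}-\boldsymbol{v})\cdot\nabla\varphi$, balancing the strong $\mathscr{L}^{2}([0,1];\mathscr{L}^{1}(\Omega))$-convergence of $f^{n}$ against the uniform $\mathscr{L}^{2}([0,1];\mathscr{L}^{\infty})$-bounds and the weak limits, then identify $f$ via \cref{th3.8.1}. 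Your explicit verification that $f$ is bounded and lies in $\mathscr{L}^{\infty}([0,1];BV(\Omega))$ before invoking uniqueness is a welcome elaboration of a step the paper compresses into a single appeal to \cref{th3.8.1}, but it does not change the argument.
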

\begin{proof}
  Let $\varphi \in \mathscr{C}_{c}^{\infty}([0,1[\times\Omega)$. From \cref{def3.1}, it holds that
  \[
     \int_{0}^{1} \int_{\Omega} f^{n}\left[\partial_{t} \varphi+\varphi \operatorname{div} \boldsymbol{v}_{n}+\boldsymbol{v}_{n} \cdot \nabla \varphi\right] d x d t + \int_{\Omega} f_{0}^{n}(x) \varphi(0, x) d x = 0.
 \]
 Hence, we derive
 \[
 \begin{aligned}
     \Big |  \int_{0}^{1} &\int_{\Omega} f\left[\partial_{t} \varphi+\varphi \operatorname{div} \boldsymbol{v}+\boldsymbol{v} \cdot \nabla \varphi\right] d x d t + \int_{\Omega} f_{0}(x) \varphi(0, x) d x\Big |\\[2mm]
     &= \Big | \int_{0}^{1} \int_{\Omega} f^{n}\left[\partial_{t} \varphi+\varphi \operatorname{div} \boldsymbol{v}_{n}+\boldsymbol{v}_{n} \cdot \nabla \varphi\right] d x d t + \int_{\Omega} f_{0}^{n}(x) \varphi(0, x) d x \\[2mm]
     &\hspace{5mm}- \Big( \int_{0}^{1} \int_{\Omega} f\left[\partial_{t} \varphi+\varphi \operatorname{div} \boldsymbol{v}+\boldsymbol{v} \cdot \nabla \varphi\right] d x d t + \int_{\Omega} f_{0}(x) \varphi(0, x) d x\Big) \Big |\\[2mm]
     &\leq \Big| \int_{0}^{1} \int_{\Omega} f^{n}\left[\partial_{t} \varphi+\boldsymbol{v}^{n} \cdot \nabla \varphi\right] d x d t + \int_{\Omega} f_{0}^{n}(x) \varphi(0, x) d x \\[2mm]
     &\hspace{5mm}- \Big(\int_{0}^{1} \int_{\Omega} f\left[\partial_{t} \varphi+\boldsymbol{v} \cdot \nabla \varphi\right] d x d t + \int_{\Omega} f_{0}(x) \varphi(0, x) d x \Big)\Big| \quad (\star)\\[2mm]
     &\hspace{5mm} + \Big | \int_{0}^{1} \int_{\Omega} \varphi (f^{n}\operatorname{div} \boldsymbol{v}^{n} - f\operatorname{div} \boldsymbol{v}) dx dt \Big | \quad (\star\star).
 \end{aligned}
 \]
 The estimate for $(\star)$ is as follow.
 \[
 \begin{aligned}
     (\star)&\leq \Big| \int_{0}^{1} \int_{\Omega} f^{n}\left[\partial_{t} \varphi+\boldsymbol{v}_{n} \cdot \nabla \varphi\right] d x d t - \int_{0}^{1} \int_{\Omega} f\left[\partial_{t} \varphi+\boldsymbol{v} \cdot \nabla \varphi\right] d x d t \Big|\\[2mm]
     &\hspace{5mm} + \Big| \int_{\Omega} f_{0}^{n}(x) \varphi(0, x) d x - \int_{\Omega} f_{0}(x) \varphi(0, x) d x \Big|\\[2mm]
     &\leq \Big| \int_{0}^{1} \int_{\Omega} \partial_{t} \varphi (f^{n} -  f) d x d t \Big|  + \Big| \int_{0}^{1} \int_{\Omega} \nabla \varphi \cdot (f^{n} \boldsymbol{v}_{n} - f \boldsymbol{v}) d x d t \Big|\\[2mm]
     &\hspace{5mm} + \Big| \int_{\Omega} f_{0}^{n}(x) \varphi(0, x) d x - \int_{\Omega} f_{0}(x) \varphi(0, x) d x \Big|. \\
 \end{aligned}
 \]
 The condition $(4)$ and condition $(3)$ yield
 \[
     \int_{0}^{1} \int_{\Omega} \partial_{t} \varphi (f^{n} -  f) d x d t \longrightarrow 0 \quad \text{as}~ n \longrightarrow \infty,
 \]
and
 \[
     \int_{\Omega} (f_{0}^{n}(x) - f_{0}(x)) \varphi(0, x) d x \longrightarrow 0\quad \text{as}~ n \longrightarrow \infty,
 \]
 respectively. Moreover, from the conditions $(1)$ and $(4)$, we obtain
 \[
 \begin{aligned}
     \Big|\int_{0}^{1} &\int_{\Omega} \nabla \varphi \cdot (f^{n} \boldsymbol{v}_{n} - f \boldsymbol{v}) d x d t\Big|\\[2mm]
     &=\Big|\int_{0}^{1} \int_{\Omega} \nabla \varphi \cdot \boldsymbol{v}_{n}(f^{n}  - f ) d x d t + \int_{0}^{1} \int_{\Omega} \nabla \varphi \cdot f (\boldsymbol{v}_{n} - \boldsymbol{v} ) d x d t\Big|\\[2mm]
%     &\leq\Big|\int_{0}^{1} \int_{\Omega} \nabla \varphi \cdot \boldsymbol{v}_{n}(f^{n}  - f ) d x d t\Big| + \Big|\int_{0}^{1} \int_{\Omega} \nabla \varphi \cdot f (\boldsymbol{v}_{n} - \boldsymbol{v} ) d x d t\Big|\\[2mm]
     &\leq\int_{0}^{1} \int_{\Omega} |\nabla \varphi \cdot \boldsymbol{v}_{n}(f^{n}  - f ) |d x d t + \Big|\int_{0}^{1} \int_{\Omega} \nabla \varphi \cdot f (\boldsymbol{v}_{n} - \boldsymbol{v} ) d x d t\Big|\\[2mm]
     &\leq \|\nabla \varphi\|_{\mathscr{L}^{\infty}([0,1] \times \Omega)^{N\times N}}\cdot \|\boldsymbol{v}_{n}\|_{\mathscr{L}^{2}([0,1];\mathscr{L}^{\infty}(\Omega;\mathbb{R}^{N}))} \cdot \|f^{n}-f\|_{\mathscr{L}^{2}([0,1];\mathscr{L}^{1}(\Omega))}\\[2mm]
     &\hspace{5mm} + \Big|\int_{0}^{1} \int_{\Omega} \nabla \varphi \cdot f (\boldsymbol{v}_{n} - \boldsymbol{v} ) d x d t\Big| \longrightarrow 0\quad \text{as}~ n \longrightarrow \infty.
 \end{aligned}
 \]
 Hence, one has
 \[
     (\star) \longrightarrow 0\quad \text{as}~ n \longrightarrow \infty.
 \]
 The estimate for $(\star\star)$ with the conditions (2) and (4) goes as follow.
 \[
 \begin{aligned}
     \Big | \int_{0}^{1} &\int_{\Omega} \varphi (f^{n}\operatorname{div} \boldsymbol{v}_{n} - f\operatorname{div} \boldsymbol{v}) dx dt \Big |\\[2mm]
     &= \Big | \int_{0}^{1} \int_{\Omega} \varphi \operatorname{div} \boldsymbol{v}_{n}  (f^{n} - f)dx dt + \int_{0}^{1} \int_{\Omega} \varphi f(\operatorname{div} \boldsymbol{v}_{n} - \operatorname{div} \boldsymbol{v}) dx dt \Big | \\[2mm]
     &\leq \|\varphi\|_{\mathscr{L}^{\infty}([0,1]\times \Omega)} \|\operatorname{div} \boldsymbol{v}_{n}\|_{\mathscr{L}^{2}([0,1]; \mathscr{L}^{\infty}(\Omega))}\|f^{n}-f\|_{\mathscr{L}^{2}([0,1];\mathscr{L}^{1}(\Omega))} \\[2mm]
     &\hspace{5mm} + \Big|\int_{0}^{1} \int_{\Omega} \varphi f(\operatorname{div} \boldsymbol{v}_{n} - \operatorname{div} \boldsymbol{v}) dx dt \Big | \longrightarrow 0\quad \text{as}~ n\longrightarrow \infty.
 \end{aligned}
 \]
 From all these facts, we get that
 \[
     \int_{0}^{1} \int_{\Omega} f\left[\partial_{t} \varphi+\varphi \operatorname{div} \boldsymbol{v}+\boldsymbol{v} \cdot \nabla \varphi\right] d x d t + \int_{\Omega} f_{0}(x) \varphi(0, x) d x = 0.
 \]
 Furthermore, \cref{th3.8.1} yields that $f\in \mathscr{L}^{\infty}([0,1] \times \Omega) \cap \mathscr{L}^{\infty}([0,1];BV(\Omega))$. 
\end{proof}

To proceed, the convergence of the constraint can be concluded based on the above results.
\begin{theorem}[Convergence of the constraint]\label{th3.13.1}
    Assume that $1\leq p < \frac{N}{N-1}$ with $\frac{1}{p} + \frac{1}{q} = 1$. Let $\{\boldsymbol{v}_{n}\}_{n} \subset \mathscr{L}_{\mathscr{V}}^{2}(\Omega)$ and $\{f_{0}^{n}\}_{n} \subset \mathscr{L}^{\infty}(\Omega) \cap BV(\Omega)$ be two sequences with the following properties:
    \begin{enumerate}[(1)]
        \item $\{\boldsymbol{v}_{n}\}_{n}$ is uniformly bounded in $\mathscr{L}_{\mathscr{V}}^{2}(\Omega)$,
        \item $\{f_{0}^{n}\}_{n}$ is uniformly bounded in $\mathscr{L}^{\infty}(\Omega) \cap BV(\Omega)$.
    \end{enumerate}
    Then, there exist subsequence $\{\boldsymbol{v}_{n_{k}}\}_{n_{k}}\subset \{\boldsymbol{v}_{n}\}_{n}$ and $\{f_{0}^{n_{k}}\}_{n_{k}}\subset \{f_{0}^{n}\}_{n}$ converging weakly to $\boldsymbol{v} \in \mathscr{L}_{\mathscr{V}}^{2}(\Omega)$ in $\mathscr{L}^{2}([0,1];\mathscr{L}^{q}(\Omega;\mathbb{R}^{N}))$, where $\boldsymbol{v}$ is bounded in $\mathscr{L}_{\mathscr{V}}^{2}(\Omega)$, and $f_{0} \in \mathscr{L}^{\infty}(\Omega) \cap BV(\Omega)$ in $\mathscr{L}^{\frac{N}{N-1}}(\Omega)$, respectively, such that
    \[
        S_{tran}(f_{0}^{n_{k}}, \boldsymbol{v}_{n_{k}}) \longrightarrow S_{tran}(f_{0}, \boldsymbol{v}),
    \]
    in the sense of distributions. 
\end{theorem}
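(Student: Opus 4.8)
\emph{Strategy and extraction of the data limits.} The plan is to extract compatible weak limits of the two data sequences, then upgrade the corresponding solution sequence $\{S_{tran}(f_0^n,\boldsymbol{v}_n)\}_n$ to one that converges \emph{strongly} in $\mathscr{L}^2([0,1];\mathscr{L}^1(\Omega))$ by a mollification/diagonal argument, and finally invoke the stability result \cref{th3.12}. Since $\mathscr{L}_{\mathscr{V}}^2(\Omega)=\mathscr{L}^2([0,1];\mathscr{V})$ is a Hilbert space, the bounded sequence $\{\boldsymbol{v}_n\}_n$ has a subsequence with $\boldsymbol{v}_{n_k}\rightharpoonup\boldsymbol{v}$ weakly in $\mathscr{L}_{\mathscr{V}}^2(\Omega)$, and $\|\boldsymbol{v}\|_{\mathscr{L}_{\mathscr{V}}^2}\le\liminf_k\|\boldsymbol{v}_{n_k}\|_{\mathscr{L}_{\mathscr{V}}^2}<\infty$ by weak lower semicontinuity. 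The admissible embedding $\mathscr{V}\hookrightarrow\mathscr{C}_0^1(\Omega;\mathbb{R}^N)\hookrightarrow\mathscr{L}^q(\Omega;\mathbb{R}^N)$ and a duality argument promote this to weak convergence in $\mathscr{L}^2([0,1];\mathscr{L}^q(\Omega;\mathbb{R}^N))$; the same embedding gives $\|\operatorname{div}\boldsymbol{v}_n(t,\cdot)\|_{\mathscr{L}^\infty(\Omega)}\le C\|\boldsymbol{v}_n(t,\cdot)\|_{\mathscr{V}}$, so $\{\operatorname{div}\boldsymbol{v}_n\}_n$ is uniformly bounded in $\mathscr{L}^2([0,1];\mathscr{L}^\infty(\Omega))$ and, after a further extraction, $\operatorname{div}\boldsymbol{v}_{n_k}$ converges weakly$^\star$ there to a limit identified with $\operatorname{div}\boldsymbol{v}$ by testing against smooth functions. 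Applying the compactness \cref{th2.7} to the uniformly bounded sequence $\{f_0^n\}_n\subset BV(\Omega)$ and relabelling once more, $f_0^{n_k}\to f_0$ strongly in $\mathscr{L}^1(\Omega)$ and weakly$^\star$ in $BV(\Omega)$ with $f_0\in BV(\Omega)$; the uniform $\mathscr{L}^\infty$ bound additionally forces $f_0\in\mathscr{L}^\infty(\Omega)$ and $f_0^{n_k}\rightharpoonup f_0$ in $\mathscr{L}^{N/(N-1)}(\Omega)$. Thus hypotheses (1)--(3) of \cref{th3.12} hold.

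\emph{Strong compactness of the solutions.} Set $f^{n_k}\triangleq S_{tran}(f_0^{n_k},\boldsymbol{v}_{n_k})$, which by \cref{th3.8.1} equals $f_0^{n_k}\circ(\phi_t^{\boldsymbol{v}_{n_k}})^{-1}$, and for $m\in\mathbb{N}$ put $f_{1/m}^{n}(t,x)\triangleq(f_0^{n}\ast\rho_{1/m})\circ(\phi_t^{\boldsymbol{v}_{n}})^{-1}(x)$. Applying \cref{lem3.11} successively for $m=1,2,\dots$ and extracting a diagonal subsequence (still denoted $n_k$) yields, for each $m$, a limit $f_{1/m}\in\mathscr{L}^2([0,1];\mathscr{L}^p(\Omega))$ with $f_{1/m}^{n_k}\to f_{1/m}$ in $\mathscr{L}^2([0,1];\mathscr{L}^p(\Omega))$, hence in $\mathscr{L}^2([0,1];\mathscr{L}^1(\Omega))$. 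The crucial point is a mollification estimate uniform in $n$: since $|Df_0^n|(\Omega)\le M$ one has $\|f_0^n\ast\rho_{1/m}-f_0^n\|_{\mathscr{L}^1(\Omega)}\le M/m$, and the change of variables $y=(\phi_t^{\boldsymbol{v}_n})^{-1}(x)$ together with the uniform Jacobian bound $\|\det\nabla\phi_t^{\boldsymbol{v}_n}\|_{\mathscr{L}^\infty(\Omega)}\le N^{N/2}L^N$ (from Gronwall's lemma and \cref{Hadamard}, exactly as in \cref{lem3.9}) gives
\[
\sup_{t\in[0,1]}\|f_{1/m}^{n}(t,\cdot)-f^{n}(t,\cdot)\|_{\mathscr{L}^1(\Omega)}\le\frac{N^{N/2}L^N M}{m}\qquad\text{for all }n.
\]
A standard $3\delta$ argument then shows $\{f^{n_k}\}_k$ is Cauchy in $\mathscr{L}^2([0,1];\mathscr{L}^1(\Omega))$; call its limit $f$, and note the same estimate forces $f_{1/m}\to f$ in $\mathscr{L}^2([0,1];\mathscr{L}^1(\Omega))$ as $m\to\infty$. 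Finally, $\|f^{n_k}(t,\cdot)\|_{\mathscr{L}^\infty(\Omega)}\le\|f_0^{n_k}\|_{\mathscr{L}^\infty(\Omega)}$ and $\|f^{n_k}(t,\cdot)\|_{\mathscr{L}^1(\Omega)}\le N^{N/2}L^N\|f_0^{n_k}\|_{\mathscr{L}^1(\Omega)}$ show $\{f^{n_k}\}$ is uniformly bounded in $\mathscr{L}^\infty([0,1];\mathscr{L}^r(\Omega))$ for every $r\in[1,\infty]$, so passing to an a.e.\ convergent subsequence and using Fatou's lemma gives $f\in\mathscr{L}^2([0,1];\mathscr{L}^p(\Omega))$. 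Hence hypothesis (4) of \cref{th3.12} is also verified.

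\emph{Conclusion and main obstacle.} Applying \cref{th3.12} to the triple $(\{\boldsymbol{v}_{n_k}\},\{f_0^{n_k}\},\{f^{n_k}\})$ gives $f=S_{tran}(f_0,\boldsymbol{v})$; since $S_{tran}(f_0^{n_k},\boldsymbol{v}_{n_k})=f^{n_k}\to f$ in $\mathscr{L}^2([0,1];\mathscr{L}^1(\Omega))$, the convergence holds in particular in the sense of distributions on $]0,1]\times\Omega$, which is the assertion. The main difficulty is the middle step: \cref{lem3.11} only delivers, for each fixed mollification scale, a converging subsequence, so one must diagonalise over $\varepsilon=1/m$ and then transfer the convergence back to the non-mollified solutions $f^{n_k}$; this transfer succeeds only because the mollification error is controlled \emph{uniformly in $n$}, which is precisely what the uniform $BV$ bound on $\{f_0^n\}_n$ provides via $\|f_0^n\ast\rho_\varepsilon-f_0^n\|_{\mathscr{L}^1}\le\varepsilon\,|Df_0^n|(\Omega)$, in combination with the uniform Gronwall/Hadamard Jacobian bounds for the flows $\phi_t^{\boldsymbol{v}_n}$. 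A secondary technical point is ensuring the successive extractions in the first step for $\boldsymbol{v}_n$, $\operatorname{div}\boldsymbol{v}_n$ and $f_0^n$ are mutually compatible and that the weak$^\star$ limit of $\operatorname{div}\boldsymbol{v}_{n_k}$ is genuinely $\operatorname{div}\boldsymbol{v}$.
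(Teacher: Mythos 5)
Your proposal is correct, and it works with the same ingredients as the paper --- extraction of weak limits for $\boldsymbol{v}_{n}$, $\operatorname{div}\boldsymbol{v}_{n}$ and $f_{0}^{n}$, the mollified solutions $(f_{0}^{n}\ast\rho_{\varepsilon})\circ(\phi^{\boldsymbol{v}_{n}}_{t})^{-1}$ together with \cref{lem3.9} and \cref{lem3.11}, a triangle-inequality transfer back to the unmollified solutions, and the stability result \cref{th3.12} --- but you organize them differently. The paper fixes the scale $\varepsilon$, applies \cref{th3.12} at the mollified level to identify the limit of $f_{\varepsilon}^{n_{k}}$ as $(f_{0}\ast\rho_{\varepsilon})\circ(\phi^{\boldsymbol{v}}_{t})^{-1}$, and then lets $\varepsilon\to 0$ to conclude $f(t,x)=f_{0}\circ(\phi^{\boldsymbol{v}}_{t})^{-1}(x)$, obtaining along the way the per-$t$ convergence in $\mathscr{L}^{1}(\Omega)$ and the convergence in $\mathscr{L}^{1}([0,1];\mathscr{L}^{p}(\Omega))$ that are re-used later in \cref{th4.2}. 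You never identify $f_{\varepsilon}$ explicitly: you first show that the unmollified solutions $f^{n_{k}}$ are Cauchy in $\mathscr{L}^{2}([0,1];\mathscr{L}^{1}(\Omega))$, using the uniform-in-$n$ bound $\sup_{t}\|f_{1/m}^{n}(t,\cdot)-f^{n}(t,\cdot)\|_{\mathscr{L}^{1}(\Omega)}\leq N^{N/2}L^{N}M/m$ (Gronwall/Hadamard Jacobian control plus the $BV$ mollification estimate) together with a diagonal extraction over $m$, and only then invoke \cref{th3.12} once. This buys something real: the paper's final step lets ``$n_{k_{j}}\to\infty$ and $\varepsilon\to 0$'' in a triangle inequality without stating that the mollification error is uniform in $n$, and your explicit uniform estimate is precisely what makes that interchange of limits rigorous (and makes clear that one subsequence serves all scales). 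What you give up is the explicit identification $f(t,x)=f_{0}\circ(\phi^{\boldsymbol{v}}_{t})^{-1}(x)$ and the pointwise-in-$t$ convergences, though these follow a posteriori from \cref{th3.8.1} once $f=S_{tran}(f_{0},\boldsymbol{v})$ is known. Two minor caveats, shared with the paper: the estimate $\|f_{0}^{n}\ast\rho_{1/m}-f_{0}^{n}\|_{\mathscr{L}^{1}(\Omega)}\lesssim |Df_{0}^{n}|(\Omega)/m$ on the bounded domain requires either extension by zero (picking up a trace term, harmless since $\{f_{0}^{n}\}_{n}$ is bounded in $BV(\Omega)\cap\mathscr{L}^{\infty}(\Omega)$) or working on $\Omega_{\varepsilon}$; and \cref{th3.12} assumes the limit field $\boldsymbol{v}$ is bounded, a hypothesis that neither your argument nor the paper's verifies beyond boundedness in $\mathscr{L}_{\mathscr{V}}^{2}(\Omega)$.
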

\begin{proof}
  The property $(1)$ implies that there exists a subsequence $\{\boldsymbol{v}_{n_{k}}\}_{n_{k}}\subset \{\boldsymbol{v}_{n}\}_{n}$  converging weakly to $\boldsymbol{v} \in \mathscr{L}_{\mathscr{V}}^{2}(\Omega)$ in $\mathscr{L}^{2}([0,1];\mathscr{L}^{q}(\Omega;\mathbb{R}^{N}))$. Moreover, $\boldsymbol{v}$ is bounded in $\mathscr{L}_{\mathscr{V}}^{2}(\Omega)$.

  Obviously, $\mathscr{V} \subset \mathscr{C}_{0}^{1}(\Omega;\mathbb{R}^{N})$ is continuously embedded into $W^{1,\infty}(\Omega;\mathbb{R}^{N})$. Then, we get that $\mathscr{L}_{\mathscr{V}}^{2}(\Omega)$ is continuously embedded into $\mathscr{L}^{2}([0,1]; W^{1,\infty} (\Omega;\mathbb{R}^{N}))$ by Lemma 2.1.23 in \cite{dirks2015variational}. Therefore, it follows that
  \[
      \|\boldsymbol{v}_{n}\|_{\mathscr{L}^{2}([0,1]; W^{1,\infty}(\Omega;\mathbb{R}^{N}))} \leq C \| \boldsymbol{v}_{n}\|_{\mathscr{L}_{\mathscr{V}}^{2}(\Omega)}, 
  \]
  where $C$ is a constant. 
  This means that $\{\operatorname{div}\boldsymbol{v}_{n_{k}}\}_{n_{k}}\subset \{\operatorname{div}\boldsymbol{v}_{n}\}_{n}$ is uniformly bounded in $\mathscr{L}^{2}([0,1]; \mathscr{L}^{\infty}(\Omega))$. Hence, there exists a subsequence $\{\operatorname{div}\boldsymbol{v}_{n_{k_{j}}}\}_{n_{k_{j}}}\subset \{\operatorname{div}\boldsymbol{v}_{n_{k}}\}_{n_{k}}$ converging weakly$^{\star}$ to $\operatorname{div}\boldsymbol{v}$ in $\mathscr{L}^{2}([0,1]; \mathscr{L}^{\infty}(\Omega))$. Indeed,  it holds
  \[
          \int_{0}^{1}\int_{\Omega} \boldsymbol{v}_{n_{k_{j}}} \cdot \nabla\varphi dx dt \longrightarrow \int_{0}^{1}\int_{\Omega} \boldsymbol{v} \cdot \nabla\varphi dx dt, 
  \]
 namely,
  \[
          \int_{0}^{1}\int_{\Omega} \varphi\operatorname{div}\boldsymbol{v}_{n_{k_{j}}}  dx dt \longrightarrow \int_{0}^{1}\int_{\Omega} \varphi\operatorname{div}\boldsymbol{v}  dx dt 
  \]
  for $\varphi \in \mathscr{C}_{c}^{\infty}([0,1]\times \Omega)$.
  For convenience, let $\{\operatorname{div}\boldsymbol{v}_{n_{k}}\}_{n_{k}}$ denote $\{\operatorname{div}\boldsymbol{v}_{n_{k_{j}}}\}_{n_{k_{j}}}$. 

  The property (2), along with \cref{th2.7}, implies that there exists a subsequence $\{f_{0}^{n_{k_{j}}}\}_{n_{k_{j}}}\subset \{f_{0}^{n_{k}}\}_{n_{k}}$ that converges weakly$^{\star}$ to $f_{0}\in BV(\Omega)$ in $BV(\Omega)$ and weakly to $\Tilde{f_{0}}\in \mathscr{L}^{\frac{N}{N-1}}(\Omega)$ in $\mathscr{L}^{\frac{N}{N-1}}(\Omega)$. In fact, $f_{0} = \Tilde{f_{0}}$ is as follows.
  For $\varphi \in \mathscr{L}^{\infty}(\Omega) \subset \mathscr{L}^{N}(\Omega)$, we derive
   \[
  \int_{\Omega}f_{0}^{n_{k_{j}}}(x)\varphi(x)dx \longrightarrow \int_{\Omega}f_{0}(x)\varphi(x)dx\quad \text{as}~ n_{k_{j}} \longrightarrow \infty,
  \]
  and
  \[
  \int_{\Omega}f_{0}^{n_{k_{j}}}(x)\varphi(x)dx \longrightarrow \int_{\Omega}\Tilde{f_{0}}(x)\varphi(x)dx\quad \text{as}~ n_{k_{j}} \longrightarrow \infty.
  \]
  Hence, $f_{0} = \Tilde{f_{0}}$.

  Moreover, $\{f_{0}^{n_{k_{j}}}\}_{n_{k_{j}}}$ is also uniformly bounded in $\mathscr{L}^{\infty}(\Omega)$. We immediately deduce that there exists a subsequence $\{f_{0}^{n_{k_{j'}}}\}_{n_{k_{j'}}}\subset \{f_{0}^{n_{k_{j}}}\}_{n_{k_{j}}}$ that converges weakly$^{\star}$ to $\Tilde{\Tilde{f_{0}}}\in \mathscr{L}^{\infty}(\Omega)$ in $\mathscr{L}^{\infty}(\Omega)$. Then, one has  
  \[
      \int_{\Omega}f_{0}^{n_{k_{j'}}}(x)\varphi(x)dx \longrightarrow \int_{\Omega}\Tilde{\Tilde{f_{0}}}(x)\varphi(x)dx\quad \text{as}~ n_{k_{j'}} \longrightarrow \infty 
  \]
  for $\varphi \in \mathscr{L}^{1}(\Omega)$.
  Obviously, it is also valid that 
  \[
  \int_{\Omega}f_{0}^{n_{k_{j'}}}(x)\varphi(x)dx \longrightarrow \int_{\Omega}f_{0}(x)\varphi(x)dx\quad \text{as}~ n_{k_{j'}} \longrightarrow \infty 
  \] 
  for $\varphi \in \mathscr{L}^{N}(\Omega) \subset \mathscr{L}^{1}(\Omega)$.
  Therefore, $f_{0} = \Tilde{f_{0}} = \Tilde{\Tilde{f_{0}}} \in \mathscr{L}^{\infty}(\Omega)\cap BV(\Omega)$.
  Next, let $\{f^{n_{k}}\}_{n_{k}}$ denote $\{f^{n_{k_{j'}}}\}_{n_{k_{j'}}}$.

   Let $\rho_{\varepsilon}$ be the mollifier in \cref{define2.7}. we claim now that $\{f_{0}^{n_{k}}\ast \rho_{\varepsilon}\}_{n_{k}}$ is uniformly bounded in $\mathscr{L}^{\infty}(\Omega)\cap BV(\Omega)$ and converges weakly to $f_{0}\ast \rho_{\varepsilon} \in \mathscr{L}^{\infty}(\Omega)\cap BV(\Omega)$ in $\mathscr{L}^{\frac{N}{N-1}}(\Omega)$ as $n_{k} \rightarrow \infty$. 
  To be specific, it is clear that $\{f_{0}^{n_{k}}\ast \rho_{\varepsilon}\}_{n_{k}}$ is uniformly bounded in $BV(\Omega)$ by the proof of \cref{lem3.9}.
  The following estimate holds by the (2.1) in \cite{ambrosio2000functions}
  \[
      \|f_{0}^{n_{k}}\ast \rho_{\varepsilon}\|_{\mathscr{L}^{\infty}(\Omega)} \leq \|f_{0}^{n_{k}}\|_{\mathscr{L}^{\infty}(\Omega)} \|\rho_{\varepsilon}\|_{\mathscr{L}^{1}(\Omega)} 
  \]
  for fixed $\varepsilon > 0$. Hence, $\{f_{0}^{n_{k}}\ast \rho_{\varepsilon}\}_{n_{k}}$ is uniformly bounded in $\mathscr{L}^{\infty}(\Omega)\cap BV(\Omega)$.
  
  Furthermore, for $\varphi \in \mathscr{L}^{N}(\Omega)$, we deduce that
  \[
      \lim_{n_{k}\rightarrow \infty} \varphi(x)f_{0}^{n_{k}}\ast \rho_{\varepsilon}(x)
%     &= \lim_{n_{k}\rightarrow \infty} \varphi(x)\int_{\Omega}f_{0}^{n_{k}}(y) \rho_{\varepsilon}(x-y)dy\\[2mm]
     = \varphi(x)\int_{\Omega}f_{0}(y) \rho_{\varepsilon}(x-y)dy,
  \]
  and 
  \[
  \begin{aligned}
      \Big| \varphi(x)\int_{\Omega}f_{0}^{n_{k}}(y) \rho_{\varepsilon}(x-y)dy\Big|
      &\leq  |\varphi(x)|\int_{\Omega}|f_{0}^{n_{k}}(y)| |\rho_{\varepsilon}(x-y)|dy\\[2mm]
      &\leq  \|\rho_{\varepsilon}\|_{\mathscr{L}^{\infty}(\Omega)}|\varphi(x)|\int_{\Omega}|f_{0}^{n_{k}}(y)|dy\\[2mm]
%      &= \|\rho_{\varepsilon}\|_{\mathscr{L}^{\infty}(\Omega)}|\varphi(x)| \|f_{0}^{n_{k}}\|_{\mathscr{L}^{1}(\Omega)}\\[2mm]
      &\leq M\|\rho_{\varepsilon}\|_{\mathscr{L}^{\infty}(\Omega)}|\varphi(x)|,
  \end{aligned}
  \]
  where $M$ denotes the upper bound of $\{f_{0}^{n}\}_{n}$.
  Hence, the Lebesgue's dominated convergence theorem yields 
  \[
      \lim_{n_{k}\rightarrow \infty} \int_{\Omega}\varphi(x)f_{0}^{n_{k}}\ast \rho_{\varepsilon}(x) dx 
%      &= \int_{\Omega}\lim_{n_{k}\rightarrow \infty} \varphi(x)f_{0}^{n_{k}}\ast \rho_{\varepsilon}(x) dx\\
      = \int_{\Omega} \varphi(x)f_{0}\ast \rho_{\varepsilon}(x) dx.
  \]
  This indicates that the above assertion is correct.
  
  To proceed, define
  \[
      f_{\varepsilon}^{n_{k}}(t,x) \triangleq (f_{0}^{n_{k}}\ast \rho_{\varepsilon}) \circ (\phi^{\boldsymbol{v}_{n_{k}}}_{t})^{-1}(x),
  \]
  where $\phi^{\boldsymbol{v}_{n_{k}}}_{t}$ is generated by the velocity field $\boldsymbol{v}_{n_{k}} \in \mathscr{L}_{\mathscr{V}}^{2}(\Omega)$.
  \Cref{lem3.11} yields that there exists a subsequence $\{f_{\varepsilon}^{n_{k_{j}}}\}_{n_{k_{j}}}\subset \{f_{\varepsilon}^{n_{k}}\}_{n_{k}}$ converging to $f_{\varepsilon} \in \mathscr{L}^{2}([0,1];\mathscr{L}^{p}(\Omega))$ in $\mathscr{L}^{2}([0,1];\mathscr{L}^{p}(\Omega))$, to $f_{\varepsilon}(t,\cdot)$ in $\mathscr{L}^{p}(\Omega)$ with $1 \leq p < \frac{N}{N-1}$ and weakly to $f_{\varepsilon}(t,\cdot)$ in $\mathscr{L}^{\frac{N}{N-1}}(\Omega)$ for $t\in [0,1]$. Hence, we can derive for $\varphi \in \mathscr{L}^{N}(\Omega)$ it holds that 
  \[
  \int_{\Omega}f_{\varepsilon}^{n_{k_{j}}}(t,x)\varphi(x) dx \longrightarrow \int_{\Omega}f_{\varepsilon}(t,x)\varphi(x) dx\quad \text{as}~  n_{k_{j}} \longrightarrow \infty.
  \]
  On the other hand, \cref{th3.12} implies
  \[
      f_{\varepsilon}(t,x) = (f_{0}\ast \rho_{\varepsilon}) \circ (\phi^{\boldsymbol{v}}_{t})^{-1}(x).
  \]
  Furthermore, the approximate property of mollifier yields 
  \[
  \int_{\Omega}f_{\varepsilon}^{n_{k_{j}}}(t,x)\varphi(x) dx \longrightarrow \int_{\Omega}f^{n_{k_{j}}}(t,x)\varphi(x) dx\quad \text{as}~  \varepsilon \longrightarrow 0,
  \]
  and 
  \[
  \int_{\Omega}f_{\varepsilon}(t,x)\varphi(x) dx \longrightarrow \int_{\Omega}f(t,x)\varphi(x) dx\quad \text{as}~  \varepsilon \longrightarrow 0.
  \]
  From all these results, we immediately obtain that $\{f^{n_{k_{j}}}(t,\cdot)\}_{n_{k_{j}}}$ converges weakly to $f(t,\cdot) \in \mathscr{L}^{\infty}(\Omega)\cap BV(\Omega)$ in  $\mathscr{L}^{\frac{N}{N-1}}(\Omega)$ for $t\in [0,1]$ and  $f(t,x) = f_{0} \circ (\phi^{\boldsymbol{v}}_{t})^{-1}(x)$. Moreover, $f \in \mathscr{L}^{\infty}([0,1] \times \Omega) \cap \mathscr{L}^{\infty}([0,1];BV(\Omega))$ by \cref{th3.8.1}.
  
  Furthermore, it holds that 
  \[
      \int_{\Omega}|f_{\varepsilon}^{n_{k_{j}}}(t,x) - f_{\varepsilon}(t,x)| dx \longrightarrow 0\quad \text{as}~  n_{k_{j}} \longrightarrow \infty,
  \]
  in $\mathscr{L}^{p}(\Omega)$ with $1 \leq p < \frac{N}{N-1}$. The approximate property of mollifier implies 
  \[
      \int_{\Omega}|f_{\varepsilon}^{n_{k_{j}}}(t,x) - f^{n_{k_{j}}}(t,x)| dx \longrightarrow 0\quad \text{as}~  \varepsilon \longrightarrow 0,
  \]
  and 
  \[
      \int_{\Omega}|f_{\varepsilon}(t,x) - f(t,x)| dx \longrightarrow 0\quad \text{as}~  \varepsilon \longrightarrow 0.
  \]
  Therefore, we obtain 
  \[
  \begin{aligned}
      \int_{\Omega}|&f^{n_{k_{j}}}(t,x) - f(t,x)| dx \\[2mm]
%      &=\int_{\Omega}|f^{n_{k_{j}}}(t,x) - f_{\varepsilon}^{n_{k_{j}}}(t, x) + f_{\varepsilon}^{n_{k_{j}}}(t,x) - f_{\varepsilon}(t,x) + f_{\varepsilon}(t,x) - f(t,x)|^{p} dx\\[2mm]
      &\leq \int_{\Omega}|f^{n_{k_{j}}}(t,x) - f_{\varepsilon}^{n_{k_{j}}}(t,x)|dx +\int_{\Omega}| f_{\varepsilon}^{n_{k_{j}}}(t,x) - f_{\varepsilon}(t,x)|dx \\[2mm]
      &\hspace{5mm}+\int_{\Omega}| f_{\varepsilon}(t,x) - f(t,x)| dx \longrightarrow 0\quad \text{as}~  n_{k_{j}} \longrightarrow \infty \ \text{and}\  \varepsilon \longrightarrow 0.
  \end{aligned}
  \]
  This shows that $\{f^{n_{k_{j}}}(t,\cdot)\}_{n_{k_{j}}}$ converges to $f(t,\cdot) \in \mathscr{L}^{\infty}(\Omega) \cap BV(\Omega)$ in  $\mathscr{L}^{1}(\Omega)$ for $t\in [0,1]$.
  
  Subsequently, it remains valid that 
  \[
      \int_{0}^{1}\|f^{n_{k_{j}}}_{\varepsilon} - f_{\varepsilon}\|_{\mathscr{L}^{p}(\Omega)}^{2}dt \longrightarrow 0\quad \text{as}~  n_{k_{j}} \longrightarrow \infty,
  \]
  and
  \[
   \|\det\nabla\phi^{\boldsymbol{v}_{n_{k_{j}}}}_{t}\|_{\mathscr{L}^{\infty}(\Omega)} \leq N^{\frac{N}{2}} Lip(\phi^{\boldsymbol{v}_{n_{k_{j}}}}_{t})^{N} \leq N^{\frac{N}{2}} L(t)^{N},
  \]
  where $L(t)$ is a constant associated to $t$.
  
  Hence, we get that 
  \[
  \begin{aligned}
     \| &f_{\varepsilon}^{n_{k_{j}}} (t,\cdot) - f^{n_{k_{j}}}(t,\cdot)\|_{\mathscr{L}^{p}(\Omega)} \\[2mm]
      &\leq  \| f_{\varepsilon}^{n_{k_{j}}}(t,\cdot)\|_{\mathscr{L}^{p}(\Omega)} + \|f^{n_{k_{j}}}(t,\cdot)\|_{\mathscr{L}^{p}(\Omega)}\\[2mm]
      &\leq \mathcal{L}^{N}(\Omega)^{1 - \frac{(N- 1)p}{N}} \| f_{\varepsilon}^{n_{k_{j}}}(t,\cdot)\|_{\mathscr{L}^{\frac{N}{N-1}}(\Omega)} + \|f^{n_{k_{j}}}(t,\cdot)\|_{\mathscr{L}^{p}(\Omega)}\\[2mm]
%      &= \mathcal{L}^{N}(\Omega)^{1 - \frac{(N- 1)p}{N}} \| (f_{0}^{n_{k_{j}}} \ast \rho_{\varepsilon}) \circ (\phi^{\boldsymbol{v}_{n_{k_{j}}}}_{t})^{-1}(\cdot) \|_{\mathscr{L}^{\frac{N}{N-1}}(\Omega)} + \|f^{n_{k_{j}}}(t,\cdot)\|_{\mathscr{L}^{p}(\Omega)}\\[2mm]
      &\leq \mathcal{L}^{N}(\Omega)^{1 - \frac{(N- 1)p}{N}} \|\det\nabla\phi^{\boldsymbol{v}_{n_{k_{j}}}}_{t}\|_{\mathscr{L}^{\infty}(\Omega)} 
      \| f_{0}^{n_{k_{j}}} \ast \rho_{\varepsilon}\|_{\mathscr{L}^{\frac{N}{N-1}}(\Omega)} + \|f^{n_{k_{j}}}(t,\cdot)\|_{\mathscr{L}^{p}(\Omega)}\\[2mm]
      &\leq N^{\frac{N}{2}} L(t)^{N} \mathcal{L}^{N}(\Omega)^{1 - \frac{(N- 1)p}{N}}  \| f_{0}^{n_{k_{j}}}\|_{\mathscr{L}^{\frac{N}{N-1}}(\Omega)} + \|f^{n_{k_{j}}}(t,\cdot)\|_{\mathscr{L}^{p}(\Omega)},\\
  \end{aligned}
  \]
  and
  \[
  \begin{aligned}
      \| f_{\varepsilon}&(t,\cdot) - f(t,\cdot)\|_{\mathscr{L}^{p}(\Omega)} \\[2mm]
       &\leq  \| f_{\varepsilon}(t,\cdot)\|_{\mathscr{L}^{p}(\Omega)} + \|f(t,\cdot)\|_{\mathscr{L}^{p}(\Omega)}\\[2mm]
       &\leq\mathcal{L}^{N}(\Omega)^{1 - \frac{(N- 1)p}{N}} \| f_{\varepsilon}(t,\cdot)\|_{\mathscr{L}^{\frac{N}{N-1}}(\Omega)} + \|f(t,\cdot)\|_{\mathscr{L}^{p}(\Omega)}\\[2mm]
       &\leq \mathcal{L}^{N}(\Omega)^{1 - \frac{(N- 1)p}{N}} 
      \|\operatorname{det}\nabla\phi^{\boldsymbol{v}}_{t}\|_{\mathscr{L}^{\infty}(\Omega)}\|f_{0}\|_{\mathscr{L}^{\frac{N}{N-1}}(\Omega)} + \|f(t,\cdot)\|_{\mathscr{L}^{p}(\Omega)}.
  \end{aligned}
  \]
  The approximate property of mollifier and Lebesgue's dominated convergence theorem yield 
  \[
      \int_{0}^{1}\|f^{n_{k_{j}}}_{\varepsilon} - f^{n_{k_{j}}}\|_{\mathscr{L}^{p}(\Omega)}^{2}dt \longrightarrow 0\quad \text{as}~  \varepsilon \longrightarrow 0,
  \]
  and 
  \[
      \int_{0}^{1}\|f_{\varepsilon} - f\|_{\mathscr{L}^{p}(\Omega)}^{2}dt \longrightarrow 0 \quad \text{as}~  \varepsilon \longrightarrow 0.
  \]
  Hence, we derive
  \[
  \begin{aligned}
      \int_{0}^{1}&\|f^{n_{k_{j}}} - f\|_{\mathscr{L}^{p}(\Omega)}dt \\[2mm]
%      &=\int_{0}^{1}\|f^{n_{k_{j}}} - f^{n_{k_{j}}}_{\varepsilon} + f^{n_{k_{j}}}_{\varepsilon} - f_{\varepsilon} + f_{\varepsilon} - f\|_{\mathscr{L}^{p}(\Omega)}dt\\[2mm]
      &\leq \int_{0}^{1}\|f^{n_{k_{j}}} - f^{n_{k_{j}}}_{\varepsilon}\|_{\mathscr{L}^{p}(\Omega)}dt + \int_{0}^{1} \|f^{n_{k_{j}}}_{\varepsilon} - f_{\varepsilon}\|_{\mathscr{L}^{p}(\Omega)}dt\\[2mm]
      &\hspace{5mm} + \int_{0}^{1} \| f_{\varepsilon} - f\|_{\mathscr{L}^{p}(\Omega)}dt\\[2mm]
      &\leq \Big(\int_{0}^{1}\|f^{n_{k_{j}}} - f^{n_{k_{j}}}_{\varepsilon}\|^{2}_{\mathscr{L}^{p}(\Omega)}dt\Big)^{\frac{1}{2}} + \Big(\int_{0}^{1} \|f^{n_{k_{j}}}_{\varepsilon} - f_{\varepsilon}\|^{2}_{\mathscr{L}^{p}(\Omega)}dt\Big)^{\frac{1}{2}} \\[2mm]
      &\hspace{5mm}+ \Big(\int_{0}^{1} \| f_{\varepsilon} - f\|^{2}_{\mathscr{L}^{p}(\Omega)}dt\Big)^{\frac{1}{2}}
      \longrightarrow 0\quad \text{as}~  n_{k_{j}} \longrightarrow \infty \ \text{and}\  \varepsilon \longrightarrow 0.
  \end{aligned}
  \]
  This implies that $\{f^{n_{k_{j}}}\}_{n_{k_{j}}}$ converges  to $f \in \mathscr{L}^{\infty}([0,1] \times \Omega) \cap \mathscr{L}^{\infty}([0,1];BV(\Omega))$ in $\mathscr{L}^{1}([0,1];\mathscr{L}^{p}(\Omega))$ with $1 \leq p < \frac{N}{N-1}$.
  This completes the proof. 
\end{proof}
\begin{remark}
  \Cref{th3.13.1} shows that 
  \begin{enumerate}[(1)]
      \item the subsequence $\{f^{n_{k}}\}_{n_{k}}$ converges to $f \in \mathscr{L}^{\infty}([0,1] \times \Omega) \cap \mathscr{L}^{\infty}([0,1];BV(\Omega))$ in $\mathscr{L}^{1}([0,1];\mathscr{L}^{p}(\Omega))$ such that $f = S_{tran}(f_{0}, \boldsymbol{v})$, where $f^{n_{k}} = S_{tran}(f_{0}^{n_{k}}, \boldsymbol{v}_{n_{k}})$.
      \item the subsequence $\{f^{n_{k}}(t,\cdot)\}_{n_{k}}$ converges to $f(t,\cdot) \in \mathscr{L}^{\infty}(\Omega) \cap BV(\Omega)$ in $\mathscr{L}^{1}(\Omega)$ and weakly to $f(t,\cdot) \in \mathscr{L}^{\infty}(\Omega) \cap BV(\Omega)$ in $\mathscr{L}^{p}(\Omega)$ for $t\in [0,1]$, where $1 < p \leq \frac{N}{N-1}$.
  \end{enumerate}
\end{remark}
\section{Existence of a Minimizer}\label{sec4}
In this section, we investigate the existence of minimizers of problem \eqref{transport equation} using the results of the previous sections. We start with the following lemma.
\begin{lemma}\label{lem4.1}
    Suppose that the sequence $\{f^{n}\}_{n} \subset \mathscr{L}^{\infty}(\Omega)\cap BV(\Omega)$ converges to $f\in \mathscr{L}^{\infty}(\Omega)\cap BV(\Omega)$ in $\mathscr{L}^{p}(\Omega)$ with $1 \leq p < \frac{N}{N-1} \leq 2$ and is uniformly bounded in $\mathscr{L}^{p}(\Omega)$. If $\mathcal{T}:\mathscr{L}^{p}(\Omega) \supset \mathscr{L}^{\infty}(\Omega)\cap BV(\Omega) \rightarrow \mathscr{L}^{2}(\widetilde{\Omega})$ is a bounded and linear operator, then there exists a subsequence $\{f^{n_{k}}\}_{n_{k}}\subset \{f^{n}\}_{n}$ such that $\{\mathcal{T}f^{n_{k}}\}_{n_{k}}$ converges weakly to $\mathcal{T}f$ in $\mathscr{L}^{2}(\widetilde{\Omega})$.
\end{lemma}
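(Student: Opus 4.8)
The plan is to pass through the Hilbert-space structure of $\mathscr{L}^2(\widetilde{\Omega})$: first turn the uniform $\mathscr{L}^p$-bound on $\{f^n\}_n$ into a uniform $\mathscr{L}^2$-bound on $\{\mathcal{T}f^n\}_n$, then extract a weakly convergent subsequence by reflexivity, and finally identify its weak limit using the strong $\mathscr{L}^p$-convergence $f^n \to f$ together with the linearity of $\mathcal{T}$. For the first step, let $M \triangleq \sup_n \|f^n\|_{\mathscr{L}^p(\Omega)} < \infty$. Since $\mathcal{T}$ is a bounded linear operator from $\mathscr{L}^\infty(\Omega)\cap BV(\Omega)$, viewed with the $\mathscr{L}^p(\Omega)$-norm, into $\mathscr{L}^2(\widetilde{\Omega})$, there is a constant $C>0$ with $\|\mathcal{T}h\|_{\mathscr{L}^2(\widetilde{\Omega})} \le C\|h\|_{\mathscr{L}^p(\Omega)}$ for every $h\in\mathscr{L}^\infty(\Omega)\cap BV(\Omega)$; in particular $\|\mathcal{T}f^n\|_{\mathscr{L}^2(\widetilde{\Omega})}\le CM$ for all $n$. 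Thus $\{\mathcal{T}f^n\}_n$ is bounded in the reflexive space $\mathscr{L}^2(\widetilde{\Omega})$, so one may choose a subsequence $\{f^{n_k}\}_k \subset \{f^n\}_n$ with $\mathcal{T}f^{n_k} \rightharpoonup g$ in $\mathscr{L}^2(\widetilde{\Omega})$ for some $g\in\mathscr{L}^2(\widetilde{\Omega})$.

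It then remains to identify $g=\mathcal{T}f$. Because $BV(\Omega)$ and $\mathscr{L}^\infty(\Omega)$ are vector spaces, $f^n - f \in \mathscr{L}^\infty(\Omega)\cap BV(\Omega)$ lies in the domain of $\mathcal{T}$, and by linearity and boundedness of $\mathcal{T}$,
\[
\|\mathcal{T}f^n - \mathcal{T}f\|_{\mathscr{L}^2(\widetilde{\Omega})} = \|\mathcal{T}(f^n - f)\|_{\mathscr{L}^2(\widetilde{\Omega})} \le C\,\|f^n - f\|_{\mathscr{L}^p(\Omega)} \longrightarrow 0
\]
as $n\to\infty$, since $f^n\to f$ in $\mathscr{L}^p(\Omega)$ by hypothesis. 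Hence $\mathcal{T}f^n\to\mathcal{T}f$ strongly, and a fortiori weakly, in $\mathscr{L}^2(\widetilde{\Omega})$; applying this to the subsequence $\{\mathcal{T}f^{n_k}\}_k$ and invoking uniqueness of weak limits yields $g=\mathcal{T}f$, which is the assertion. Equivalently, one may test against an arbitrary $\psi\in\mathscr{L}^2(\widetilde{\Omega})$ and use $\langle \mathcal{T}f^{n_k}-\mathcal{T}f,\psi\rangle_{\mathscr{L}^2(\widetilde{\Omega})} = \langle f^{n_k}-f,\mathcal{T}^{*}\psi\rangle$, which tends to $0$ because $\mathcal{T}^{*}\psi\in\mathscr{L}^{p'}(\Omega)$ pairs continuously with the $\mathscr{L}^p$-convergent sequence $\{f^{n_k}\}_k$, where $\tfrac1p+\tfrac1{p'}=1$.

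There is no genuine obstacle in the argument; the single point to keep in mind is that $\mathcal{T}$ is assumed continuous only for the $\mathscr{L}^p(\Omega)$-topology on its domain, so the proof must exploit the hypothesis that $\{f^n\}_n$ converges strongly in $\mathscr{L}^p(\Omega)$ rather than, say, only weakly$^{\star}$ in $BV(\Omega)$, for which $\mathcal{T}$ need not be continuous. The passage to a subsequence and the formulation with weak (rather than strong) $\mathscr{L}^2$-convergence are retained solely because that is the form in which the lemma will be used in the existence proof.
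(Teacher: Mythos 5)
Your proof is correct and follows essentially the same route as the paper: bound $\{\mathcal{T}f^{n}\}_{n}$ in $\mathscr{L}^{2}(\widetilde{\Omega})$ via the uniform $\mathscr{L}^{p}$-bound, extract a weakly convergent subsequence, and identify its limit through the estimate $\|\mathcal{T}f^{n_k}-\mathcal{T}f\|_{\mathscr{L}^{2}(\widetilde{\Omega})}\le\|\mathcal{T}\|\,\|f^{n_k}-f\|_{\mathscr{L}^{p}(\Omega)}\to 0$ together with uniqueness of limits. Your observation that this estimate in fact gives strong (hence weak) convergence of the whole sequence, making the subsequence extraction redundant, applies equally to the paper's own argument.
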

\begin{proof}
  It holds
  \[
      \|\mathcal{T}f^{n}\|_{\mathscr{L}^{2}(\widetilde{\Omega})} \leq \|\mathcal{T}\| \|f^{n}\|_{\mathscr{L}^{p}(\Omega)}.
  \]
  Therefore, $\{\mathcal{T}f^{n}\}_{n}$ is uniformly bounded in $\mathscr{L}^{2}(\widetilde{\Omega})$. Hence, there exists a subsequence $\{\mathcal{T}f^{n_{k}}\}_{n_{k}}$ of $\{\mathcal{T}f^{n}\}_{n}$  converging weakly to $h \in \mathscr{L}^{2}(\widetilde{\Omega})$.

  We now claim $h = \mathcal{T}f$.
  Indeed, Let $\varphi = 1 \in \mathscr{L}^{2}(\widetilde{\Omega})$,  we then obtain 
  \[
  \begin{aligned}
      \Big|\int_{\widetilde{\Omega}}\varphi(\mathcal{T}f^{n_{k}} - \mathcal{T}f) dx \Big| &\leq \int_{\widetilde{\Omega}}|\mathcal{T}f^{n_{k}} - \mathcal{T}f| dx\\
%     & = \|\mathcal{T}f^{n_{k}} - \mathcal{T}f\|_{\mathscr{L}^{1}(\widetilde{\Omega})}\\
      &\leq \mathcal{L}^{N}(\widetilde{\Omega})^{\frac{1}{2}} \|\mathcal{T}f^{n_{k}} - \mathcal{T}f\|_{\mathscr{L}^{2}(\widetilde{\Omega})}\\
      &\leq \mathcal{L}^{N}(\Tilde{\Omega})^{\frac{1}{2}} \| \mathcal{T} \| \|f^{n_{k}} - f\|_{\mathscr{L}^{p}(\Omega)}
      \longrightarrow 0\quad \text{as}~ n_{k_{j}} \longrightarrow \infty.
  \end{aligned}
  \]
  Since the uniqueness of the limit, we immediately get that $h = \mathcal{T}f$.
\end{proof}
\begin{remark}
  The conclusion in \cref{lem4.1} still holds when $\mathscr{L}^{2}(\widetilde{\Omega})$ is replaced by any self-reflexive Banach space.
\end{remark}

To proceed, the solution existence of problem \eqref{transport equation} can be proved as follows.
\begin{theorem}\label{th4.2}
    Assume that $\mathscr{V}$ is an admissible Hilbert space, $f_{0} \in  \mathscr{X} = BV(\Omega) \cap \mathscr{L}^{\infty}(\Omega)$, and $\mathcal{T}_{t_{i}}:\mathscr{L}^{p}(\Omega)\rightarrow \mathscr{L}^{2}(\widetilde{\Omega})$ is a linear continuous operator, where $1 \leq p < \frac{N}{N-1}$. Let further $\mathcal{D}(\cdot,\cdot)$ denote the squared $\mathscr{L}^{2}-$norm and $\mathcal{R}_{1}$ be the TV regularization. Moreover, there exists an index $i$ such that $\mathcal{T}_{t_{i}}.1 \neq 0$.  Then, the variational problem in \eqref{transport equation}
    has a solution $(f_{0}^{*}, \boldsymbol{v}^{*}) \in \mathscr{L}^{\infty}(\Omega)\cap BV(\Omega) \times \mathscr{L}_{\mathscr{V}}^{2}(\Omega)$.
\end{theorem}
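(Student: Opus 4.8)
The natural approach is the direct method of the calculus of variations, with the PDE constraint closed along a minimizing sequence by means of \cref{th3.13.1}. Since $\mathcal{D}$, $\mathcal{R}_1$ and the velocity regularizer are nonnegative, $\mathcal{J}\ge 0$, and $\mathcal{J}(0,0)=\tfrac1T\sum_i\|g_{t_i}\|_{\mathscr{L}^2(\widetilde\Omega)}^2<\infty$, so $m:=\inf\mathcal{J}\in[0,\infty)$ exists; fix a minimizing sequence $(f_0^n,\boldsymbol{v}_n)\in\mathscr{X}\times\mathscr{L}_{\mathscr{V}}^2(\Omega)$ with $\mathcal{J}(f_0^n,\boldsymbol{v}_n)\le m+1$ eventually. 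The term $\mu_2\sum_i\int_0^{t_i}\|\boldsymbol{v}_n(\tau,\cdot)\|_{\mathscr{V}}^2\,d\tau$ (retaining the largest $t_i$, and extending $\boldsymbol{v}_n$ by $0$ on $]t_T,1]$ if needed) makes $\{\boldsymbol{v}_n\}_n$ bounded in $\mathscr{L}_{\mathscr{V}}^2(\Omega)$, and $\mu_1\mathcal{R}_1(f_0^n)=\mu_1|Df_0^n|(\Omega)$ makes $\{|Df_0^n|(\Omega)\}_n$ bounded.

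The one nonroutine a priori estimate is boundedness of $\{f_0^n\}_n$ in $\mathscr{L}^1(\Omega)$, for which the hypothesis that $\mathcal{T}_{t_i}$ applied to the constant function $1$ is nonzero is exactly what is needed. By the Sobolev--Poincar\'e inequality in $BV$ (applied on each connected component of $\Omega$) write $f_0^n=c_n+r_n$ with $c_n$ the mean of $f_0^n$ over $\Omega$ and $\|r_n\|_{\mathscr{L}^{N/(N-1)}(\Omega)}\le C\,|Df_0^n|(\Omega)\le C$. Then $f_{t_i}^n=f_0^n\circ(\phi_{t_i}^{\boldsymbol{v}_n})^{-1}=c_n\cdot 1+\tilde r_n$ with $\tilde r_n:=r_n\circ(\phi_{t_i}^{\boldsymbol{v}_n})^{-1}$; a change of variables together with the uniform Jacobian bound $\|\det\nabla\phi_{t_i}^{\boldsymbol{v}_n}\|_{\mathscr{L}^\infty(\Omega)}\le N^{N/2}\,\mathrm{Lip}(\phi_{t_i}^{\boldsymbol{v}_n})^N\le C$ (via \cref{Hadamard} and Gronwall's lemma, exactly as in the proof of \cref{lem3.9}, using boundedness of $\{\boldsymbol{v}_n\}_n$ in $\mathscr{L}_{\mathscr{V}}^2(\Omega)$ and $\mathscr{V}\hookrightarrow\mathscr{C}_0^1$) makes $\{\tilde r_n\}_n$ bounded in $\mathscr{L}^p(\Omega)$ for $1\le p\le\tfrac{N}{N-1}$. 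The data term forces $\|\mathcal{T}_{t_i}f_{t_i}^n\|_{\mathscr{L}^2(\widetilde\Omega)}\le\|g_{t_i}\|_{\mathscr{L}^2(\widetilde\Omega)}+\sqrt{T(m+1)}$, and since $\mathcal{T}_{t_i}$ is bounded on $\mathscr{L}^p(\Omega)$,
\[
|c_n|\,\|\mathcal{T}_{t_i}(1)\|_{\mathscr{L}^2(\widetilde\Omega)}=\big\|\mathcal{T}_{t_i}f_{t_i}^n-\mathcal{T}_{t_i}\tilde r_n\big\|_{\mathscr{L}^2(\widetilde\Omega)}\le C,
\]
so $\mathcal{T}_{t_i}(1)\ne 0$ yields $|c_n|\le C$. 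Hence $\{f_0^n\}_n$ is bounded in $BV(\Omega)$, a fortiori in $\mathscr{L}^p(\Omega)$ for $1\le p<\tfrac{N}{N-1}$.

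Next I would extract subsequences: $\boldsymbol{v}_n\rightharpoonup\boldsymbol{v}^\ast$ in the Hilbert space $\mathscr{L}_{\mathscr{V}}^2(\Omega)$ (with $\boldsymbol{v}^\ast$ bounded there) and, by \cref{th2.7}, $f_0^n\rightharpoonup^\ast f_0^\ast$ in $BV(\Omega)$ with $f_0^\ast\in BV(\Omega)$. Then \cref{th3.13.1} applies to $\{(f_0^n,\boldsymbol{v}_n)\}_n$: along a further subsequence $S_{tran}(f_0^n,\boldsymbol{v}_n)\to S_{tran}(f_0^\ast,\boldsymbol{v}^\ast)=:f^\ast$ in the sense of distributions, $f^\ast\in\mathscr{L}^\infty([0,1]\times\Omega)\cap\mathscr{L}^\infty([0,1];BV(\Omega))$ solves \eqref{transport} with data $(f_0^\ast,\boldsymbol{v}^\ast)$, and (remark after \cref{th3.13.1}) $f^n(t,\cdot)\to f^\ast(t,\cdot)$ in $\mathscr{L}^1(\Omega)$ and weakly in $\mathscr{L}^p(\Omega)$ for each $t$; in particular $(f_0^\ast,\boldsymbol{v}^\ast)$ is admissible and $f_{t_i}^\ast=f_0^\ast\circ(\phi_{t_i}^{\boldsymbol{v}^\ast})^{-1}$. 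For lower semicontinuity I would interpolate the $\mathscr{L}^1$-convergence of $f^n(t,\cdot)$ against the uniform $\mathscr{L}^{N/(N-1)}$-bound to get $f_{t_i}^n\to f_{t_i}^\ast$ strongly in $\mathscr{L}^p(\Omega)$, hence $\mathcal{T}_{t_i}f_{t_i}^n\to\mathcal{T}_{t_i}f_{t_i}^\ast$ in $\mathscr{L}^2(\widetilde\Omega)$ by continuity of $\mathcal{T}_{t_i}$ (equivalently invoke \cref{lem4.1} and weak lower semicontinuity of $\|\cdot-g_{t_i}\|_{\mathscr{L}^2}^2$), so $\mathcal{D}(\mathcal{T}_{t_i}f_{t_i}^n,g_{t_i})\to\mathcal{D}(\mathcal{T}_{t_i}f_{t_i}^\ast,g_{t_i})$; $\mathcal{R}_1$ is lower semicontinuous under $\mathscr{L}^1$-convergence, and $\boldsymbol{v}\mapsto\int_0^{t_i}\|\boldsymbol{v}(\tau,\cdot)\|_{\mathscr{V}}^2\,d\tau$ is weakly lower semicontinuous as the square of a Hilbert norm. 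Adding up, $\mathcal{J}(f_0^\ast,\boldsymbol{v}^\ast)\le\liminf_n\mathcal{J}(f_0^n,\boldsymbol{v}_n)=m$, so $(f_0^\ast,\boldsymbol{v}^\ast)$ is a minimizer.

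The main obstacle is that \cref{th3.13.1}, and the stability result \cref{th3.12} it rests on, require $\{f_0^n\}_n$ to be \emph{uniformly bounded in $\mathscr{L}^\infty(\Omega)$}, whereas the energy only controls its $BV$-norm and its mean. I would handle this by truncating: replacing each $f_0^n$ by $T_{M_n}(f_0^n)$, with $M_n$ chosen so large that $\mathcal{R}_1$ does not increase (truncation is $1$-Lipschitz, so $|D(T_{M}f)|(\Omega)\le|Df|(\Omega)$) and, by dominated convergence in the data term, $\mathcal{J}$ changes by less than $1/n$, which produces a minimizing sequence lying in $\mathscr{X}$. Arranging this compatibly with a \emph{uniform} $L^\infty$ bound — or, alternatively, weakening the $\mathscr{L}^\infty$ hypothesis in \cref{th3.13.1} to a $BV$-bound together with weak$^\ast$ convergence in $BV$ — is the genuinely delicate step; everything else above is essentially bookkeeping with the machinery already established in \cref{sec3}.
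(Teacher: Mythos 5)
Your overall architecture coincides with the paper's: coercivity of the two regularizers, control of the mean of $f_{0}^{n}$ through the data term and the hypothesis $\mathcal{T}_{t_{i}}.1\neq 0$ (the paper writes $f_{0}^{n}=w^{n}+h^{n}$ with $w^{n}$ the mean, exactly your $c_{n}+r_{n}$, and bounds $\|\mathcal{T}_{t_{i}}(w^{n})\|_{\mathscr{L}^{2}(\widetilde{\Omega})}$ using the Jacobian estimate), then passage to the limit via \cref{th3.13.1}, \cref{lem4.1} and weak lower semicontinuity. Up to that point your argument is the paper's argument.

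However, your proof has a genuine, and indeed self-acknowledged, gap at precisely the step where it diverges: you never obtain a \emph{uniform} $\mathscr{L}^{\infty}(\Omega)$ bound on the minimizing sequence $\{f_{0}^{n}\}_{n}$, which is an explicit hypothesis of \cref{th3.13.1} (and of \cref{th3.12} behind it, since the uniqueness result \cref{th3.8.1} and the identification of the limit as $f_{0}\circ(\phi^{\boldsymbol{v}}_{t})^{-1}$ are formulated for $f_{0}\in\mathscr{L}^{\infty}(\Omega)\cap BV(\Omega)$). Your Poincar\'e step only yields an $\mathscr{L}^{N/(N-1)}$ bound on the mean-free part, and the truncation device you sketch does not close the hole: truncating at levels $M_{n}\to\infty$ produces a sequence in $\mathscr{X}$ but with no uniform $\mathscr{L}^{\infty}$ bound, so \cref{th3.13.1} still does not apply, while truncating at a fixed level $M$ is not justified to preserve the infimum without an a priori bound on minimizers — which is exactly what is missing. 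Weakening the hypotheses of \cref{th3.13.1} to a $BV$ bound alone would require redoing the stability and uniqueness analysis of \cref{sec3}, i.e.\ it is not available as stated. The paper closes this step differently: after bounding the means $\big|\int_{\Omega}f_{0}^{n}dx\big|$ as you do, it invokes a generalized Poincar\'e--Wirtinger inequality in the form $\|h^{n}\|_{\mathscr{L}^{\infty}(\Omega)}\leq k'\,|Dh^{n}|(\Omega)$ for the mean-free part, and adds the two bounds to conclude $\|f_{0}^{n}\|_{\mathscr{L}^{\infty}(\Omega)}\leq C$, after which boundedness in $\mathscr{L}^{\infty}(\Omega)\cap BV(\Omega)$ and the application of \cref{th3.13.1} proceed as in your sketch. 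So the missing ingredient in your proposal is exactly this $\mathscr{L}^{\infty}$ estimate on $h^{n}$ by the total variation (your caution about it is understandable, since $BV(\Omega)$ does not embed into $\mathscr{L}^{\infty}(\Omega)$ for $N\geq 2$, but it is the estimate the paper uses); without it, or some substitute, your argument does not reach the hypotheses of \cref{th3.13.1} and the existence claim in \cref{th4.2} is not established.
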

\begin{proof}
  Select the index $i$ such that $\mathcal{T}_{t_{i}}.1 \neq 0$.
  Let $\{f_{0}^{n}\}_{n} \subset \mathscr{L}^{\infty}(\Omega)\cap BV(\Omega)$ and $\{\boldsymbol{v}_{n}\}_{n} \subset  \mathscr{L}_{\mathscr{V}}^{2}(\Omega)$ be a minimizing sequence for the objective functional in \cref{transport equation}. This implies that
  \begin{equation}\label{4.8}
      \int_{0}^{1}\|\boldsymbol{v}_{n}(\tau, \cdot)\|_{\mathscr{V}}^2 \mathrm{~d} \tau \leq C,
  \end{equation}
  and 
  \begin{equation}\label{4.9}
      |Df_{0}^{n}|(\Omega) \leq C,
  \end{equation}
  where $C$  denotes a universal positive constant that may differ from line to line.
  Obviously, \cref{4.8} shows that $\{\boldsymbol{v}_{n}\}_{n}$ is uniformly bounded in $\mathscr{L}_{\mathscr{V}}^{2}(\Omega)$. This means that $f^{n}_{t}(x) = f^{n}(t,x) = f_{0}^{n} \circ (\phi^{\boldsymbol{v}_{n}}_{t})^{-1}(x)$ via \cref{th3.8.1}. Therefore, one can derive
  \begin{equation}\label{4.10}
  \begin{aligned}
      \| \mathcal{T}_{t_{i}}(f^{n}_{t_{i}}) - g_{t_{i}}\|_{\mathscr{L}^{2}(\widetilde{\Omega})}^{2}
      =\| \mathcal{T}_{t_{i}}(f_{0}^{n} \circ (\phi^{\boldsymbol{v}_{n}}_{t_{i}})^{-1}) - g_{t_{i}}\|_{\mathscr{L}^{2}(\widetilde{\Omega})}^{2} \leq C.\\
  \end{aligned}
  \end{equation}
  Let $w^{n} = \frac{1}{|\Omega|}\int_{\Omega}f_{0}^{n}dx$ and $h^{n} = f_{0}^{n} - w^{n}$. Then $\int_{\Omega}h^{n} dx = 0$ and $Dh^{n} = Df_{0}^{n}$. Hence $|Dh^{n}(\Omega)|\leq C$.

  The generalized Poincar\'e--Writinger inequality yields 
  \begin{equation}\label{4.11}
      \|h^{n}\|_{\mathscr{L}^{p}(\Omega)} \leq k |Dh^{n}|(\Omega)\leq C,
  \end{equation}
  and
  \begin{equation}
      \|h^{n}\|_{\mathscr{L}^{\infty}(\Omega)} \leq k' |Dh^{n}|(\Omega)\leq C,
  \end{equation}
  where $1 \leq p < \frac{N}{N-1} \leq 2$. Here, $k$ and $k'$ are constants.

  Furthermore, we observe from \eqref{4.10} with $h^{n} = f_{0}^{n} - w^{n}$ that  
  \[
      \| \mathcal{T}_{t_{i}}(h^{n} \circ (\phi^{\boldsymbol{v}_{n}}_{t_{i}})^{-1}) + \mathcal{T}_{t_{i}}(w^{n}) - g_{t_{i}}\|_{\mathscr{L}^{2}(\widetilde{\Omega})}^{2} \leq C.
  \]
 Hence, we obtain
  \[
      \begin{aligned}
          \|\mathcal{T}_{t_{i}}&(w^{n})\|_{\mathscr{L}^{2}(\widetilde{\Omega})}\\[2mm]
          &= \|(\mathcal{T}_{t_{i}}(h^{n} \circ (\phi^{\boldsymbol{v}_{n}}_{t_{i}})^{-1}) + \mathcal{T}_{t_{i}}(w^{n}) - g_{t_{i}})- (\mathcal{T}_{t_{i}}(h^{n} \circ (\phi^{\boldsymbol{v}_{n}}_{t_{i}})^{-1}) - g_{t_{i}})\|_{\mathscr{L}^{2}(\widetilde{\Omega})}\\[2mm]
%          &\leq C + \|\mathcal{T}_{t_{i}}(h^{n} \circ (\phi^{\boldsymbol{v}_{n}}_{t_{i}})^{-1}\|_{\mathscr{L}^{2}(\widetilde{\Omega})} + \|g_{t_{i}}\|_{\mathscr{L}^{2}(\widetilde{\Omega})}\\[2mm]
          &\leq C + \|\mathcal{T}_{t_{i}}\| \|h^{n} \circ (\phi^{\boldsymbol{v}_{n}}_{t_{i}})^{-1}\|_{\mathscr{L}^{p}(\Omega)} + \|g_{t_{i}}\|_{\mathscr{L}^{2}(\widetilde{\Omega})}.\\
      \end{aligned}
  \]
  The estimate for $\|h^{n} \circ (\phi^{\boldsymbol{v}_{n}}_{t_{i}})^{-1}\|_{\mathscr{L}^{p}(\Omega)}$ is as follow. First,
  the proof of \cref{th3.13.1} implies that
  \[
      \|\det\nabla\phi^{\boldsymbol{v}_{n}}_{t_{i}}\|_{\mathscr{L}^{\infty}(\Omega)} \leq C.
  \]
  Let $y = \phi^{\boldsymbol{v}_{n}}_{t_{i}}(x)$, with \eqref{4.11}, one has
  \[
      \begin{aligned}
          \int_{\Omega}|h^{n} \circ (\phi^{\boldsymbol{v}_{n}}_{t_{i}})^{-1}(x)|^{p}dx
          &=\int_{\Omega}|h^{n}(y)|^{p} |\det\nabla\phi^{\boldsymbol{v}_{n}}_{t_{i}}(y)|dx \leq  C. \\[2mm]
      \end{aligned}
  \]
  Therefore, we get 
  \[
          \|\mathcal{T}_{t_{i}}(w^{n})\|_{\mathscr{L}^{2}(\Tilde{\Omega})} \leq  C,
%          &\leq C + \|\mathcal{T}_{t_{i}}\| \|h^{n} \circ (\phi^{\boldsymbol{v}_{n}}_{t_{i}})^{-1}\|_{\mathscr{L}^{p}(\Omega)} + \|g_{t_{i}}\|_{\mathscr{L}^{2}(\Tilde{\Omega})}\\[2mm]
  \]
   that is,
  \[
      \|\mathcal{T}_{t_{i}}(w^{n})\|_{\mathscr{L}^{2}(\widetilde{\Omega})} = \Big|\frac{1}{|\Omega|}\int_{\Omega}f_{0}^{n}dx\Big|\|\mathcal{T}_{t_{i}}.1\|_{\mathscr{L}^{2}(\widetilde{\Omega})} \leq C.
  \]
 We immediately deduce that $\Big\{\Big|\int_{\Omega}f_{0}^{n}dx\Big|\Big\}_{n}$ is uniformly bounded.

  With \eqref{4.11}, it holds that
  \[
      \begin{aligned}
          \|f_{0}^{n}\|_{\mathscr{L}^{\infty}(\Omega)} = &\|h^{n} + \frac{1}{|\Omega|}\int_{\Omega}f_{0}^{n}dx \|_{\mathscr{L}^{\infty}(\Omega)}\\[2mm]
          \leq &\|h^{n}\|_{\mathscr{L}^{\infty}(\Omega)} + \Big|\int_{\Omega}f_{0}^{n}dx\Big| \\[2mm]
          \leq & C.
      \end{aligned}  
  \]
  From all these facts, $\{f_{0}^{n}\}_{n}$ is uniformly bounded in $\mathscr{L}^{\infty}(\Omega)$ and $\mathscr{L}^{1}(\Omega)$. 
  Together with \eqref{4.9}, $\{f_{0}^{n}\}_{n}$ is uniformly bounded in $BV(\Omega)$.
   Therefore, $\{f_{0}^{n}\}_{n}$ is uniformly bounded in $\mathscr{L}^{\infty}(\Omega) \cap BV(\Omega)$.

   Next, \cref{th3.13.1} shows that there exist subsequence $\{\boldsymbol{v}_{n_{k}}\}_{n_{k}}\subset \{\boldsymbol{v}_{n}\}_{n}$ and $\{f_{0}^{n_{k}}\}_{n_{k}}\subset \{f_{0}^{n}\}_{n}$ such that
   \[
       \boldsymbol{v}_{n_{k}} \xrightharpoonup[\ \ \ ]\quad \boldsymbol{v} \in  \mathscr{L}_{\mathscr{V}}^{2}(\Omega)\quad \text{in} ~\mathscr{L}_{\mathscr{V}}^{2}(\Omega),
   \]
   and 
   \[
       f_{0}^{n_{k}} \longrightarrow  f_{0} \in \mathscr{L}^{\infty}(\Omega) \cap BV(\Omega)\quad \text{in the weak}^{*}\  \text{topology of}\  BV(\Omega),
   \]
  and 
  \[
      f^{n_{k}}(t_{i},\cdot) \longrightarrow f(t_{i},\cdot) \in \mathscr{L}^{\infty}(\Omega) \cap BV(\Omega)\quad \text{in}~ \mathscr{L}^{1}(\Omega),
  \]
   and
   \[
      f^{n_{k}}(t_{i},\cdot) \xrightharpoonup[\ \ \ ]\quad f(t_{i},\cdot) \in \mathscr{L}^{\infty}(\Omega) \cap BV(\Omega)\quad \text{in}~ \mathscr{L}^{\frac{N}{N-1}}(\Omega).
  \]
  Here, $f(t_{i},x) = f_{0} \circ (\phi^{\boldsymbol{v}}_{t_{i}})^{-1}(x)$.
  Moreover, it is clear that for $i\in \{1,\cdots,T\}$
  \[
       \boldsymbol{v}_{n_{k}} \xrightharpoonup[\ \ \ ]\quad  \boldsymbol{v} \in  \mathscr{L}_{\mathscr{V}}^{2}(\Omega)\quad \text{in}~\mathscr{L}^{2}([0,t_{i}];\mathscr{V}).
   \]
   Furthermore, via \cref{lem4.1}, we immediately deduce that there exists a subsequence $\{\mathcal{T}_{t_{i}}(f^{n_{k_{j}}}_{t_{i}})\}_{n_{k_{j}}}$ of $\{\mathcal{T}_{t_{i}}(f^{n_{k}}_{t_{i}})\}_{n_{k}}$ converging weakly to $\mathcal{T}_{t_{i}}(f_{t_{i}})$ in $\mathscr{L}^{2}(\widetilde{\Omega})$ for $i\in \{1,\cdots,T\}$. Let $\{\mathcal{T}_{t_{i}}(f^{n_{k}}_{t_{i}})\}_{n_{k}}$ denote $\{\mathcal{T}_{t_{i}}(f^{n_{k_{j}}}_{t_{i}})\}_{n_{k_{j}}}$.

   From the weak semicontinuity property of the convex function of measures and the weak semicontinuity of the norm, we obtain that
   \[
       \| \mathcal{T}_{t_{i}}(f_{t_{i}}) - g_{t_{i}}\|_{\mathscr{L}^{2}(\widetilde{\Omega})}^{2} \leq \varliminf_{n_{k} \rightarrow \infty} \| \mathcal{T}_{t_{i}}(f^{n_{k}}_{t_{i}}) - g_{t_{i}}\|_{\mathscr{L}^{2}(\widetilde{\Omega})}^{2} 
   \]
   and 
   \[
       \int_0^{t_{i}}\|\boldsymbol{v}(\tau, \cdot)\|_{\mathscr{V}}^2 \mathrm{~d} \tau \leq \varliminf_{n_{k} \rightarrow \infty} \int_0^{t_{i}}\|\boldsymbol{v}_{n_{k}}(\tau, \cdot)\|_{\mathscr{V}}^2 \mathrm{~d} \tau 
   \]
   for $i\in \{1,\cdots,T\}$,
   and
   \[
       \int_{\Omega}|Df_{0}| \leq \varliminf_{n_{k} \rightarrow \infty} \int_{\Omega}|Df_{0}^{n_{k}}|.
   \]
   Summing up all these results, we get that
   \[
       \mathcal{J}(f_{0},\boldsymbol{v}) \leq \varliminf_{n \rightarrow \infty}\mathcal{J}(f_{0}^{n},\boldsymbol{v}^{n}),
   \]
   where $\mathcal{J}$ as defined in \eqref{transport equation}.
   This implies that the variational problem in \eqref{transport equation} has a solution $(f_{0}, \boldsymbol{v}) \in \mathscr{L}^{\infty}(\Omega)\cap BV(\Omega) \times \mathscr{L}_{\mathscr{V}}^{2}(\Omega)$.
\end{proof}

By \cref{the2.7}, the following result remains valid.
\begin{corollary}
    Assume that $\mathscr{V}$ is an admissible Hilbert space, $I \in  \mathscr{X} = BV(\Omega) \cap \mathscr{L}^{\infty}(\Omega)$, and $\mathcal{T}_{t_{i}}:\mathscr{L}^{p}(\Omega)\rightarrow \mathscr{L}^{2}(\widetilde{\Omega})$ is a linear continuous operator, where $1 \leq p < \frac{N}{N-1}$. Let further $\mathcal{D}(\cdot,\cdot)$ denote the squared $\mathscr{L}^{2}-$norm and $\mathcal{R}_{1}$ be the TV regularization. Moreover, there exists an index $i$ such that $\mathcal{T}_{t_{i}}.1 \neq 0$. Then, the variational problem in \eqref{2.10} has a solution $(I^{*}, \boldsymbol{v}^{*}) \in \mathscr{L}^{\infty}(\Omega)\cap BV(\Omega) \times \mathscr{L}_{\mathscr{V}}^{2}(\Omega)$.
\end{corollary}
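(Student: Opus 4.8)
The plan is to deduce the corollary from \cref{th4.2} through the equivalence recorded in \cref{the2.7}; once that equivalence is seen to persist in the space $\mathscr{X}=BV(\Omega)\cap\mathscr{L}^{\infty}(\Omega)$, what remains is bookkeeping. First I would fix an arbitrary feasible pair $(I,\boldsymbol{v})\in\mathscr{X}\times\mathscr{L}_{\mathscr{V}}^{2}(\Omega)$ for \eqref{2.10}, so that $\phi^{\boldsymbol{v}}_{0,t_{i}}$ is the flow of \eqref{2.4}; by \cref{th2.1} this flow is a $\mathscr{C}^{1}$-diffeomorphism of $\Omega$, and the intensity-preserving action \eqref{2.2} gives $\phi^{\boldsymbol{v}}_{0,t_{i}}\#I=I\circ(\phi^{\boldsymbol{v}}_{t_{i}})^{-1}$. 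Setting $f_{0}\triangleq I$ and $f(t,\cdot)\triangleq I\circ(\phi^{\boldsymbol{v}}_{t})^{-1}$, \cref{modify tranport solution} shows $f\in\mathscr{L}^{\infty}([0,1]\times\Omega)\cap\mathscr{L}^{\infty}([0,1];BV(\Omega))$ is a weak solution of transport equation \eqref{transport} with datum $f_{0}$, so $(f_{0},\boldsymbol{v})$ is feasible for \eqref{transport equation} and, by construction, $f_{t_{i}}=\phi^{\boldsymbol{v}}_{0,t_{i}}\#I$. Hence the data terms $\mathcal{D}(\mathcal{T}_{t_{i}}(\phi^{\boldsymbol{v}}_{0,t_{i}}\#I),g_{t_{i}})$ of \eqref{2.10} and $\mathcal{D}(\mathcal{T}_{t_{i}}(f_{t_{i}}),g_{t_{i}})$ of \eqref{transport equation} coincide, while $\mu_{2}\int_{0}^{t_{i}}\|\boldsymbol{v}(\tau,\cdot)\|_{\mathscr{V}}^{2}\,d\tau$ and $\mu_{1}\mathcal{R}_{1}(I)=\mu_{1}\mathcal{R}_{1}(f_{0})$ agree verbatim; conversely a feasible pair of \eqref{transport equation} returns one of \eqref{2.10}. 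Thus $(I,\boldsymbol{v})\mapsto(f_{0},\boldsymbol{v})=(I,\boldsymbol{v})$ is a value-preserving bijection between the feasible sets of the two problems — precisely the content of \cref{the2.7} read in the present function spaces.

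Granting this, I would invoke \cref{th4.2}: its hypotheses coincide with those of the corollary (with $I$ in the role of $f_{0}$), so \eqref{transport equation} has a minimizer $(f_{0}^{*},\boldsymbol{v}^{*})\in\bigl(\mathscr{L}^{\infty}(\Omega)\cap BV(\Omega)\bigr)\times\mathscr{L}_{\mathscr{V}}^{2}(\Omega)$. Put $I^{*}\triangleq f_{0}^{*}$. Then $(I^{*},\boldsymbol{v}^{*})$ is feasible for \eqref{2.10} and, by the value-preserving bijection, $\mathcal{J}(I^{*},\boldsymbol{v}^{*})=\mathcal{J}(f_{0}^{*},\boldsymbol{v}^{*})$ equals the minimum of the functional in \eqref{transport equation}, which in turn equals the infimum of the functional in \eqref{2.10} (finite and nonnegative, since $\mathcal{D}$, the Hilbert-norm integral and $\mathcal{R}_{1}$ are all nonnegative). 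Therefore $(I^{*},\boldsymbol{v}^{*})$ attains the infimum in \eqref{2.10} and lies in the asserted space $\bigl(\mathscr{L}^{\infty}(\Omega)\cap BV(\Omega)\bigr)\times\mathscr{L}_{\mathscr{V}}^{2}(\Omega)$, which is the claim.

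The only genuine obstacle — and the point where this is more than a formality — is the claim that the equivalence of \cref{the2.7} survives when $\mathscr{X}$ is enlarged from the smooth class envisaged there to $BV(\Omega)\cap\mathscr{L}^{\infty}(\Omega)$: for a non-differentiable $I$ the transport equation must be read weakly. This is supplied by \cref{sec3}. \Cref{modify tranport solution} yields the weak solution $f=I\circ(\phi^{\boldsymbol{v}}_{\cdot})^{-1}$ in $\mathscr{L}^{\infty}([0,1]\times\Omega)\cap\mathscr{L}^{\infty}([0,1];BV(\Omega))$ (the hypothesis $\operatorname{div}\boldsymbol{v}\in\mathscr{L}^{N}([0,1]\times\Omega)$ holding because $\mathscr{L}_{\mathscr{V}}^{2}(\Omega)$ embeds continuously into $\mathscr{L}^{2}([0,1];W^{1,\infty}(\Omega;\mathbb{R}^{N}))$ on the bounded domain $\Omega$), and \cref{th3.8.1} together with \cref{cor3.7.1} identifies it as the unique renormalized solution, so that $S_{tran}(I,\boldsymbol{v})(t_{i},\cdot)=\phi^{\boldsymbol{v}}_{0,t_{i}}\#I$ unambiguously and stays in $\mathscr{X}$. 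With this identification no estimates beyond those already established in \cref{sec3} and \cref{sec4} are needed, and the corollary follows.
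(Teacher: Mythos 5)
Your proposal is correct and follows essentially the same route as the paper: the corollary is obtained directly from \cref{th4.2} through the equivalence of \cref{the2.7}, exactly as you argue. Your extra care in checking that the equivalence persists for $BV(\Omega)\cap\mathscr{L}^{\infty}(\Omega)$ data via \cref{modify tranport solution} and \cref{th3.8.1} simply makes explicit what the paper leaves implicit.
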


\section{Conclusion}
To prove the solution existence of the time-discretized variational model with intensity, edge feature and topology preservations proposed in \cite{chen2019new}, we considered the problem from the perspective of PDE-constrained optimal control. It is worth noting that the unknown velocity field is restricted into the admissible Hilbert space, and the unknown template image (the initial value of the sequence image) is modeled in the space of bounded variation functions. We firstly proved  the solution existence and uniqueness of the constraint of transport equation in the equivalent optimal control model. Consequently, the stability of the associated nonlinear solution operator is concluded. In turn, the closure of that constraint is proved in the sense of distributions. Finally, the solution existence of the equivalent optimal control model is obtained immediately. As a result, the problem of the solution existence to the time-discretized intensity-preserving variational model proposed in \cite{chen2019new} is solved.

%The further work will continue to focus on the theoretical analysis of the proposed model in \cite{chen2019new}, such as the solution existence of the time-continuous variational model with intensity-preserving and the solution existence of the time-continuous (and time-discretized) variational model with mass preservation. Moreover, the efficient algorithms for such problems should also be considered.

%\section*{Acknowledgments}

%\newpage
\renewcommand\refname{References}
\bibliographystyle{plain}
\bibliography{references}

%\bibliographystyleMath{unsrt}
%\bibliographyMath{refs-etc}

%\bibliographystylePhys{unsrt}
%\bibliographyPhys{refs-etc}

\end{sloppypar}
\end{document}